\documentclass[12pt]{amsart}

\usepackage{tikz-cd} 
\usepackage[left=25mm,right=25mm, top=25mm, bottom=25mm]{geometry}
\usepackage[english]{babel}
\usepackage[T1]{fontenc}
\usepackage[nice]{nicefrac}
\usepackage{amsmath}
\usepackage{amstext}
\usepackage{amsfonts}
\usepackage{amssymb}
\usepackage{enumitem}
\usepackage{amsthm}
\usepackage{mathtools}
\usepackage{dsfont}
\usepackage{color}
\usepackage{graphicx}
\usepackage[colorinlistoftodos]{todonotes}
\usepackage[colorlinks=true, allcolors=blue]{hyperref}
\usepackage{float}
\usepackage[colorinlistoftodos]{todonotes}
\usepackage{theoremref}
\usepackage{enumitem}
\usepackage{dirtytalk}
\usepackage{todonotes}
\usepackage{marginnote} 

\usepackage{multicol} 
\usepackage{titletoc}
\allowdisplaybreaks
\let\oldaddcontentsline\addcontentsline

\newcommand{\starttocentries}{\let\addcontentsline\oldaddcontentsline}
\newtheorem{theorem}{Theorem}[section]
\newtheorem{lemma}[theorem]{Lemma}
\newtheorem{prop}[theorem]{Proposition}
\newtheorem{cor}[theorem]{Corollary}
\newtheorem{lem}[theorem]{Lemma}

\newtheorem{question}[theorem]{Question}
\newtheorem*{cor*}{Corollary}
\newtheorem*{conjecture*}{Conjecture}
\newtheorem*{thm*}{Theorem}
\newtheorem*{lem*}{Lemma}

\newtheorem*{prop*}{Proposition}
\theoremstyle{definition}
\newtheorem{definition}[theorem]{Definition}

\newtheorem{example}[theorem]{Example}
\newtheorem*{defn*}{Definition}

\theoremstyle{remark}
\newtheorem{remark}[theorem]{Remark}

\newcommand{\M}{\mathcal{M}}

\newcommand{\supp}{\operatorname{supp}}

\newcommand{\vNx}{x}

\newcommand{\vNn}{\mathcal{N}}
\newcommand{\vNm}{\mathcal{M}}

\newcommand{\wo}{\text{wo}\text{-}}

\newcommand{\so}{\text{so}\text{-}}
\newcommand{\sostar}{\text{so}^*\text{-}}

\newcommand{\suba}{\text{SA}}

\newcommand{\E}{\mathbb{E}}

\makeatletter \def\l@subsection{\@tocline{2}{0pt}{1pc}{5pc}{}} \def\l@subsection{\@tocline{2}{0pt}{2pc}{6pc}{}} \makeatother

\title[Confineness, Recurrence and $C^*$-simplicity]{$C^*$-simplicity, confined subalgebras and operator algebraic uniform recurrence}
\author[Amrutam]{Tattwamasi Amrutam}
\address{Institute of Mathematics of the Polish Academy of Sciences, ul. Sniadeckich 8, 00-656, Warszawa, Poland}
\email{tattwamasiamrutam@gmail.com}
\author[Jiang]{Yongle Jiang}
\address{School of Mathematical Sciences, Dalian University of Technology, Dalian, 116024, China}
\email{yonglejiang@dlut.edu.cn}
\date{\today}

\begin{document}
\begin{abstract}
We introduce the notion of confined subalgebras in the context of the group von Neumann algebra. We also define Uniformly Recurrent States---an operator-algebraic analog of Uniformly Recurrent Subgroups. Using this framework, we show that a countable discrete group is $C^*$-simple if and only if it admits no non-trivial amenable confined subalgebras. This generalizes the well-known result of \cite{Ken20} that characterizes $C^*$-simplicity in terms of trivial amenable URSs.
\end{abstract}
\maketitle
\section{Introduction}
The interplay between the algebraic properties of a countable discrete group $\Gamma$ and the structural properties of its associated operator algebras---the reduced group $C^*$-algebra $C^*_r(\Gamma)$ and the group von Neumann algebra $L(\Gamma)$---is a cornerstone of modern operator algebras. Two fundamental regularity properties in this context are $C^*$-simplicity (the property that $C^*_r(\Gamma)$ is simple) and the unique trace property (the property that $C^*_r(\Gamma)$ admits a unique tracial state). Historically, understanding these properties has been deeply tied to identifying obstructions constructed from amenable subgroups.

It is a classical fact that a $C^*$-simple group admits no non-trivial amenable normal subgroups. Passing to the operator-algebraic realm, normal subgroups naturally translate to invariant von Neumann subalgebras (subalgebras invariant under the conjugation action of the group). Indeed, from~\cite{amrutam2025amenable}, we know that the presence of these strong rigidity properties (namely, the unique trace property) implies the absence of non-trivial invariant amenable von Neumann subalgebras. While the absence of invariant amenable subalgebras is a necessary condition for $C^*$-simplicity, it is not sufficient to fully capture the rich dynamical obstructions at play. This motivates the question of whether one can go beyond invariance. In this paper, our primary objective is to find a weakened dynamical condition that characterizes the $C^*$-simplicity of the group.

In a seminal paper, Kennedy \cite{Ken20} demonstrated that the absence of amenable normal subgroups is insufficient to characterize $C^*$-simplicity. Instead, one must analyze the topological dynamics of the Chabauty space of subgroups, $\mathrm{Sub}(\Gamma)$. Kennedy proved that $\Gamma$ is $C^*$-simple if and only if it admits no non-trivial amenable Uniformly Recurrent Subgroups (URS), a concept introduced by Glasner and Weiss \cite{GlasnerWeiss2015}. Closely related to the URS is the notion of a \emph{confined subgroup}---a subgroup whose conjugacy class is bounded away from the trivial subgroup in the Chabauty topology. Recently, Bader, Gelander, and Levitt \cite{bader2024spectral} provided spectral characterizations of confined subgroups, further establishing confinement as the premier dynamical relaxation of normality. Motivated by Kennedy's $C^*$-simplicity characterization, works of Le Boudec--Bon~\cite{le2018subgroup, le2022confined}, and the spectral work of Bader et al.~\cite{bader2024spectral}, we introduce notions of confinement and uniform recurrence into the von Neumann algebraic setting. 
\begin{definition}
[Confined Subalgebra]
A subalgebra $\mathcal{M}\le L(\Gamma)$ is called confined if 
\[\mathbb{C}\not\in \overline{\left\{\lambda(g)\mathcal{M}\lambda(g)^*: g\in\Gamma\right\}}^{\text{EM-topology}}=\overline{\left\{\mathcal{M}^g: g\in\Gamma\right\}}^{\text{EM-topology}}\]    
\end{definition}
Here, we equip the subalgebras of $L(\Gamma)$ with the Effros-Mar\'{e}chal (EM) topology. Our first main result is a characterization of $C^*$-simplicity in terms of non-confinement of amenable subalgebras. In particular, we prove the following. 
\begin{theorem}
\label{thm:mainconfined}
Let $\Gamma$ be a countable discrete group. $\Gamma$ is a $C^*$-simple group if and only if every amenable $\mathcal{M}\le L(\Gamma)$ is not confined.    
\end{theorem}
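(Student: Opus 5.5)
Both directions go through Kennedy's theorem \cite{Ken20}: $\Gamma$ is $C^*$-simple if and only if it has no non-trivial amenable uniformly recurrent subgroup. Equivalently, by the theorem of Breuillard--Kalantar--Kennedy--Ozawa, it is $C^*$-simple if and only if no amenable subgroup $H\le\Gamma$ has $\{e\}\notin\overline{\{gHg^{-1}\}}^{\text{Chabauty}}$. So the task is to move between subgroups and subalgebras in a way compatible with the two topologies.

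\textbf{($\Leftarrow$).} Assume $\Gamma$ is not $C^*$-simple. Then there is a non-trivial amenable URS $X\subset \mathrm{Sub}(\Gamma)$. Pick $H\in X$; it is amenable and confined, since $\{e\}\notin X=\overline{\{H^g\}}$. Set $\mathcal M=L(H)\le L(\Gamma)$, which is amenable because $H$ is. We need $\mathbb C\notin\overline{\{L(H^g)\}}^{EM}$. The EM topology on subalgebras of a finite von Neumann algebra is compact. For any net $g_i$, we may pass to a subnet with $H^{g_i}\to K$ in Chabauty and $L(H^{g_i})\to\mathcal N$ in EM. I will show $L(K)\subseteq\mathcal N$; since $K\ne\{e\}$, this gives $\mathcal N\ne\mathbb C$. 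Take $k\in K$. Then $k\in H^{g_i}$ eventually, so $\lambda(k)\in L(H^{g_i})$ eventually. By the standard description of EM convergence (liminf is the set of strong limits of elements $x_i\in\mathcal M_i$), $\lambda(k)\in\liminf L(H^{g_i})\subseteq\mathcal N$. This direction is routine.

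\textbf{($\Rightarrow$).} This is the main obstacle. Assume $\Gamma$ is $C^*$-simple and let $\mathcal M$ be amenable. We must find $g_i$ with $\mathcal M^{g_i}\to\mathbb C$ in EM. By compactness and the characterization $\mathcal N_i\to\mathbb C$ iff $\limsup \mathcal N_i=\mathbb C$, i.e.\ every weak limit point of elements of $\mathcal M^{g_i}$ in the unit ball is scalar, it suffices to show the following. For every finite $F\subset\Gamma$, every $\varepsilon>0$, and every finite set of unit-ball elements of $\mathcal M$, there is some $g$ with
\[
|\tau(\lambda(h)^*\,\lambda(g)x\lambda(g)^*)-\tau(x)\delta_{h,e}|<\varepsilon
\]
for $h\in F$, uniformly over $x$ in the unit ball of $\mathcal M$. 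Equivalently, the conditional expectation $E_{\mathcal M^g}$ nearly kills the vectors $\delta_h$, $h\in F\setminus\{e\}$. I plan to prove this by contradiction. If $\mathcal M$ is confined, take an EM-cluster point $\mathcal N\ne\mathbb C$ of its conjugates. Amenability passes to EM limits (amenable = injective, and one can use hypertrace/approximate-embedding arguments along the net), so $\mathcal N$ is amenable. Averaging over the $\Gamma$-orbit closure then yields a minimal closed $\Gamma$-invariant set $Y$ in the space of amenable subalgebras with $\mathbb C\notin Y$.

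Next use amenability of $\mathcal N\in Y$. Take a hypertrace, i.e.\ an $\mathcal N$-central state on $B(\ell^2\Gamma)$, restrict it to $\ell^\infty(\Gamma)$, and push it to the Furstenberg boundary $\partial_F\Gamma$ via a $\Gamma$-map $\ell^\infty(\Gamma)\to C(\partial_F\Gamma)$ (boundary injectivity). The aim is a point $x\in\partial_F\Gamma$ such that the unitaries in $\mathcal N$ act compatibly with the stabilizer $\Gamma_x$, as in Kennedy's proof that amenable URSs embed into stabilizers of boundary points. $C^*$-simplicity means $\Gamma_x$ is amenable with $\Gamma$ acting freely on the relevant part, giving $\Gamma_x$ trivial in the sense that $\tau(\lambda(h)^*\,\cdot)$ vanishes on $\mathcal N$ for $h\ne e$. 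This would force $\mathcal N=\mathbb C$, a contradiction. The delicate point is that elements of $\mathcal N$ are not group elements, so I would use the Fourier coefficients $\tau(\lambda(h)^*x)$ together with Haagerup-type averaging (the Powers averaging property of $C^*$-simple groups) to show these coefficients vanish. This is where I expect the real work lies.
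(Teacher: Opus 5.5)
The main problem is your repeated appeal to compactness of the Effros--Mar\'echal topology. That claim is false, and the paper proves the opposite: see Theorem~\ref{thm:non-compact}, and also the radial MASA $B\le L(F_n)$, whose conjugates $s_1^kBs_1^{-k}$ have no EM-convergent subnet.

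In the $(\Leftarrow)$ direction this is harmless. Suppose $\mathbb{C}$ lay in the EM-closure of $\{L(H)^g\}$. Take $g_i$ with $L(H^{g_i})\to\mathbb{C}$, and pass to a subnet with $H^{g_i}\to K$ in the Chabauty space, which is compact. Then $K\neq\{e\}$, and your $\liminf$ argument gives $\lambda(k)\in\mathbb{C}$ for some $k\neq e$, a contradiction. This is in substance the paper's argument via continuity of $\mathcal{M}\mapsto\phi_{\mathcal{M}}$.

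In the $(\Rightarrow)$ direction the false claim is fatal. A confined $\mathcal{M}$ need not have any EM-cluster point $\mathcal{N}\neq\mathbb{C}$ among its conjugates, and there is no minimal closed invariant set $Y$ to work in. If you move to the compact space $\mathrm{PD}_1(\Gamma)$ instead, limit points need not be of the form $\phi_{\mathcal{N}}$, so there is no algebra to attach a hypertrace to. Even granting $\mathcal{N}$, the decisive step is only announced: ``stabilizer compatibility'' plus Powers averaging forcing the Fourier coefficients to vanish. It is also unclear how Powers averaging would interact with an $\mathcal{N}$-central state.

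The missing idea is that no limit object is needed: you translate the hypertrace of $\mathcal{M}$ itself.
\begin{itemize}
\item Take an $\mathcal{M}$-central state $\varphi$ on $\mathbb{B}(\ell^2(\Gamma))$ with $\varphi|_{L(\Gamma)}=\tau_0$.
\item Restrict it to $C(\partial_F\Gamma)$, embedded as the diagonal operators $\delta_t\mapsto f(tx_0)\delta_t$, to get a measure $\mu$. The boundary property gives $s_i$ with $s_i\mu\to\delta_x$.
\item Then $\varphi_i=\varphi(\lambda_{s_i}^*\,\cdot\,\lambda_{s_i})$ is $\mathcal{M}^{s_i}$-central, and $\phi_{\mathcal{M}}(s_i^{-1}gs_i)=\varphi_i(m_i\lambda_g^*)$ with $m_i=\mathbb{E}_{\mathcal{M}^{s_i}}(\lambda_g)$.
\item Freeness of $\Gamma\curvearrowright\partial_F\Gamma$ gives $f\in C(\partial_F\Gamma)$ with $0\le f\le 1$, $f(x)=1$ and $f(gx)=0$. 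Split $\lambda_g^*=\lambda_g^*f+\lambda_g^*(1-f)$.
\item Cauchy--Schwarz and centrality give $|\varphi_i(m_i\lambda_g^*f)|^2\le\varphi_i((g^{-1}\cdot f)m_i^*m_i)\le\varphi_i(g^{-1}\cdot f)\to f(gx)=0$.
\item Likewise $|\varphi_i(m_i\lambda_g^*(1-f))|^2\le\varphi_i(1-f)\to 0$.
\end{itemize}
Hence $s_i\cdot\phi_{\mathcal{M}}\to\delta_e$, and Proposition~\ref{prop:phiM-orbit} finishes the proof; this is Theorem~\ref{thm:amenable-not-confined-direct}. Centrality of the hypertrace is exactly what handles the non-group elements of $\mathcal{M}$. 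No Powers averaging or stabilizer analysis is required.
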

The other goal of this paper is to introduce and study \emph{confined von Neumann subalgebras} $\mathcal{M} \le L(\Gamma)$ and their dynamical limits, \emph{Uniformly Recurrent States} (see Definition~\ref{def:URA}). 
Transitioning these powerful dynamical concepts from groups to operator algebras presents an immediate and severe topological hurdle. In the geometric setting, the space of subgroups endowed with the Chabauty topology is compact, ensuring that orbit closures are well-behaved and minimal invariant subsystems necessarily exist. The natural analog for von Neumann algebras is the Effros-Mar\'{e}chal (EM) topology. However, the geometric intuition fundamentally breaks down in the operator-algebraic realm. Even for the most standard examples of separable tracial von Neumann algebras, the space of subalgebras fails to be compact, a structural divergence that forms our second main result.
\begin{theorem}
\label{thm:intro-noncompact}
Let $M$ be any separable tracial von Neumann algebra containing $L(\mathbb{Z})$. Then the space of von Neumann subalgebras $\mathrm{SubAlg}(M)$, equipped with the Effros-Mar\'{e}chal topology, is not compact.
\end{theorem}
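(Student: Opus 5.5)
The plan is to exhibit a sequence of von Neumann subalgebras of $L(\mathbb{Z})\subseteq M$ that has no convergent subnet. For a separable tracial von Neumann algebra $(M,\tau)$, I will use the standard description of Effros--Mar\'echal convergence in the tracial setting: a net $\mathcal{N}_i\to\mathcal{N}$ in $\mathrm{SubAlg}(M)$ if and only if $E_{\mathcal{N}_i}(x)\to E_{\mathcal{N}}(x)$ in $\|\cdot\|_2$ for every $x\in M$, i.e.\ the Jones projections $e_{\mathcal{N}_i}\to e_{\mathcal{N}}$ strongly on $L^2(M)$. If this equivalence is not available in the needed form, I would prove the implication ``EM-convergence $\Rightarrow$ strong convergence of $E_{\mathcal{N}_i}$'' directly from the $\liminf/\limsup$ description, using $\tau$-preserving conditional expectations. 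Granting this, it suffices to find $\mathcal{N}_n\le L(\mathbb{Z})$ such that $E_{\mathcal{N}_n}$ converges \emph{weakly} on $L^2$ to an operator $T$ that is not idempotent. Indeed, if some subnet converged in the EM topology to $\mathcal{N}$, then $E_{\mathcal{N}_{n_i}}\to E_{\mathcal{N}}$ strongly, hence weakly. This would force $E_{\mathcal{N}}=T$, which is absurd since $E_{\mathcal{N}}$ is a projection.

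For the construction, identify $L(\mathbb{Z})\cong L^\infty([0,1),\mathrm{Leb})$ with $\tau=\int$. Since $M$ is tracial and $L(\mathbb{Z})\subseteq M$, for $\mathcal{N}\le L(\mathbb{Z})$ the map $E^M_{\mathcal{N}}$ restricted to $L^2(L(\mathbb{Z}))$ is the usual conditional expectation there. It is therefore enough to check that, for $x,y\in L^\infty[0,1)$, the quantity $\tau(E_{\mathcal{N}_n}(x)y)$ converges to something incompatible with idempotence.

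For each $n\ge 2$, let $\varphi_n$ be the measure-preserving involution of $[0,1)$ defined as follows. Split $[0,1/2)$ into the dyadic intervals $I^n_k=[k2^{-n},(k+1)2^{-n})$. The map $\varphi_n$ translates $I^n_k$ by $1/2$ (and back) when $k$ is even, and is the identity on $I^n_k$ and on $I^n_k+1/2$ when $k$ is odd. Let $\mathcal{N}_n=\{f: f\circ\varphi_n=f\}$; this is a von Neumann subalgebra, and $E_{\mathcal{N}_n}f=\tfrac12(f+f\circ\varphi_n)$. Write $\sigma(x)=x+1/2 \bmod 1$ and $\chi_n$ for the indicator of the union of the even-indexed intervals and their translates. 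Then
\[
f\circ\varphi_n=\chi_n\,(f\circ\sigma)+(1-\chi_n)f .
\]
Since $\chi_n\to\tfrac12$ weakly-$*$ in $L^\infty$ (it is a fine alternating pattern), we get $f\circ\varphi_n\to\tfrac12(f+f\circ\sigma)$ weakly in $L^2$. Hence
\[
E_{\mathcal{N}_n}\to T:=\tfrac34\,\mathrm{id}+\tfrac14\,\sigma_*
\]
weakly. Because $\sigma_*^2=\mathrm{id}$, we have $T^2=\tfrac{10}{16}\mathrm{id}+\tfrac{6}{16}\sigma_*\neq T$; for instance, test on $f=\mathbf 1_{[0,1/2)}$. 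The weak convergence should be verified first for $f,g$ continuous and then extended by density, using $\|E_{\mathcal{N}_n}\|\le 1$. This step is routine.

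The main obstacle is the first step: pinning down precisely which EM-topology on $\mathrm{SubAlg}(M)$ is meant, and justifying that EM-convergence of subalgebras of a tracial $M$ implies strong convergence of the trace-preserving conditional expectations. A warning is needed here. If $\mathrm{SubAlg}(M)$ carried the subspace topology from the Haagerup--Winsl{\o}w EM topology on all von Neumann algebras on $L^2(M)$, and ``$\mathcal{N}\subseteq M$'' were a closed condition there, one would get compactness instead. So the argument must use the version adapted to $M$, namely the one in which $\mathcal{N}_i\to\mathcal{N}$ is detected by $\|E_{\mathcal{N}_i}(x)-E_{\mathcal{N}}(x)\|_2\to0$. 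In the $\liminf/\limsup$ formulation, I would try to show that the $\liminf$ condition gives $\|E_{\mathcal{N}_i}(x)-x\|_2\to0$ for $x\in\mathcal{N}$, and that the $\limsup$ condition combined with the trace controls weak limit points of $E_{\mathcal{N}_i}(x)$ so that they lie in $\mathcal{N}$. Together these would give $E_{\mathcal{N}_i}\to E_{\mathcal{N}}$ strongly.
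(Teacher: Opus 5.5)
Your proof is correct and is essentially the paper's argument. Both use Proposition~\ref{conditionalexpectationconvergence} to reduce to exhibiting subalgebras of $L(\mathbb{Z})$ whose trace-preserving conditional expectations converge weakly to a non-idempotent map. The differences are in the details: the paper uses the two-dimensional algebras $\mathbb{C}p_n+\mathbb{C}q_n$ with $p_n\to\frac12 p_1$ weakly, whereas you use fixed-point algebras of measure-preserving involutions; and you handle a general $M$ by restricting $E^M_{\mathcal{N}}$ to $L(\mathbb{Z})$ (which works because $T$ maps $L(\mathbb{Z})$ into itself), instead of invoking closedness of $\mathrm{SubAlg}(L(\mathbb{Z}))$ in $\mathrm{SubAlg}(M)$.

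Your closing warning rests on a false premise. The Effros--Mar\'{e}chal topology on all von Neumann algebras on a separable Hilbert space is Polish but not compact; your own sequence already has no convergent subsequence there, since any limit would lie in $\mathrm{SubAlg}(M)$. So no ``adapted'' topology is needed, and the implication you require is exactly Proposition~\ref{conditionalexpectationconvergence}.
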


This stark lack of compactness implies that a naive translation of topological dynamics to $\mathrm{SubAlg}(L(\Gamma))$ is fraught with pathologies; sequences of conjugated subalgebras can diverge entirely or yield limits that fail to capture the algebraic essence of the original system. Motivated by \cite{dudko2024character, jiang2024example}, to circumvent this lack of compactness and to study the dynamics of subalgebras, we employ a positive-definite function approach. We can canonically associate a positive definite function $\phi_{\mathcal{M}}$ to each subalgebra $\mathcal{M}$ using the projection $\mathbb{E}_{\mathcal{M}}$; we construct an equivariant map that transports our dynamical system into the pointwise compact space of normalized positive definite functions, $\mathrm{PD}_1(\Gamma)$. This compactification allows us to formally define Uniformly Recurrent States.
\begin{definition}
 We define the space of \emph{subalgebra states} $\mathcal{X}_\Gamma \subset \mathrm{PD}_1(\Gamma)$ as the pointwise closure of the positive definite functions arising from von Neumann subalgebras (here, $\phi_{\M}(g)=\tau_0(\mathbb{E}_{\M}(\lambda(g))\lambda(g)^*)$). Explicitly,
\[
    \mathcal{X}_\Gamma := \overline{\{\phi_{\mathcal{M}} : \mathcal{M} \le L(\Gamma)\}}^{\,\mathrm{ptwise}}.
\]
\end{definition}
Using this new compactified framework, we successfully generalize Kennedy's geometric characterization to a purely operator-algebraic one, establishing a complete correspondence between $C^*$-simplicity and the topological dynamics of amenable von Neumann subalgebras (Theorem~\ref{thm:mainconfined}). Furthermore, we show that this operator-algebraic state space is strictly richer than its geometric counterpart.

The remainder of the paper is organized as follows. In Section~\ref{sec:preliminaries}, we collect the necessary background on the Effros-Mar\'{e}chal topology, hypertraces, and conditional expectations. It is here that we prove that the Effros-Mar\'{e}chal space is generally non-compact (see Theorem~\ref{thm:non-compact}). Section~\ref{sec:confined} introduces confined subalgebras and establishes their basic properties; we also prove Theorem~\ref{thm:mainconfined} there. Section~\ref{sec:pdfandURA} develops the positive-definite function approach to confinement. Subsection~\ref{subsec:URA} transports the dynamics to the compact space \(\mathcal{X}_\Gamma\) of subalgebra states and defines Uniformly Recurrent States. Finally, Subsection~\ref{subsec:exoticURAs} constructs exotic Uniformly Recurrent States arising from finite-order automorphisms, showing that the class of Uniformly Recurrent States is strictly richer than the URSs.

\subsection*{Acknowledgments}
Y. J. is partially supported by the National Natural Science Foundation of China (Grant No. 12471118). Besides, this work was partially supported by the Simons Foundation grant (award no. SFI-MPS-T-Institutes-00010825) and from State Treasury funds as part of a task commissioned by the Minister of Science and Higher Education under the project ``Organization of the Simons Semesters at the Banach Center - New Energies in 2026-2028'' (agreement no. MNiSW/2025/DAP/491).

\section{Preliminaries}
\label{sec:preliminaries}
We now establish the necessary operator-algebraic framework. We begin by recalling the standard topologies that govern the behavior of bounded linear operators.

First, we recall the notions of the weak and ultraweak topologies on the set of bounded linear operators on a Hilbert space $\mathcal{H}$. The reader may refer to \cite{takekasi} for more details. The weak operator topology (abbreviated as WOT) is generated by open sets of the form 
\[\left\{T\in\mathbb{B}(\mathcal{H}): \left|\langle(T-T_0)\xi,\eta\rangle\right|<\epsilon\right\},\]
where $T_0\in \mathbb{B}(\mathcal{H}), \xi,\eta \in \mathcal{H}$ and $\epsilon>0$. The strong$^*$-topology is the topology induced by semi-norms of the form 
\[\left\{T\mapsto\sqrt{\left\|T\xi\right\|^2+\left\|T^*\xi\right\|^2}: T\in\mathbb{B}(\mathcal{H}),\xi\in\mathcal{H}\right\}.\]
We now turn to discussing the Effros-Mar\'{e}chal topology on sub-von Neumann algebras of a von Neumann algebra with separable predual (see~\cite{Effros, haagerup1998effros}). We denote this set of subalgebras by $\suba(\vNn)$. Additionally, we write the strong-$*$ operator topology on $\vNn$ as $\so^*$ and the weak-operator-topology as $\wo$.

The Effros-Mar\'{e}chal topology is defined in terms of the limit inferior and limit superior of a sequence of sub-von Neumann algebras $\vNm_n \in \suba(\vNn)$.

We set
$$\liminf\limits_{n\to \infty} \vNm_n:=\{x\in \vNn\ \vert  \ \exists (x_n)_{n\in\mathbb{N}} \in l^{\infty}(\mathbb{N}, \vNm_n) \, : \, \sostar\lim\limits_{n\to \infty}x_n=x\},
$$
and 
$$\limsup\limits_{n\to \infty} \vNm_n:=\langle \{x\in \vNn\ \vert  \ \exists (x_n)_{n\in\mathbb{N}} \in l^{\infty}(\mathbb{N}, \vNm_n) \, : \, \wo\lim\limits_{n\to \infty}x_n=x\}\rangle
$$ where $\langle\cdot\rangle$ means the von Neumann algebra generated by the set.
\begin{definition}\label{EM top}\cite[Definition 2.2]{ando2016ultraproducts} 
 We say that $\vNm_n$ converges to $\vNm$ if and only if $\liminf_{n\to\infty} \vNm_n = \limsup_{n\to\infty} \vNm_n = \vNm$.   
\end{definition}
It is worth noting that this topology gives a standard Borel structure on $\suba(\vNn)$.
We abbreviate the Effros-Mar\'{e}chal topology as EM-topology. In the case of a finite separable von Neumann algebra, this topology can be rephrased in a more convenient manner. 
\begin{prop}
\label{conditionalexpectationconvergence}\cite[Corollary~2.12]{haagerup1998effros}
Let $\vNm_n,\vNm\in\suba(\vNn)$, $n\in\mathbb{N}$, for a finite, separable von Neumann algebra $\vNn$. If  $\vNm_n\to \vNm$ in Effros-Mar\'{e}chal topology, then 
\begin{equation}\label{exp}
    \E_{\vNm_n}(\vNx) \xrightarrow[]{\text{so*}}  \E_{\vNm}(\vNx) , \ \forall \vNx \in \mathcal{N},
\end{equation}
where $\E_{\vNm}:\vNn\longrightarrow \vNm$ denotes the canonical conditional expectation.   
\end{prop}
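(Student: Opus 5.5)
The statement to prove is the proposition quoted from \cite[Corollary~2.12]{haagerup1998effros}: if $\vNm_n\to\vNm$ in the EM-topology inside a finite separable $\vNn$, then $\E_{\vNm_n}(x)\to\E_{\vNm}(x)$ in the $\sostar$ topology for every $x\in\vNn$.

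Throughout, work in the GNS space $L^2(\vNn,\tau)$ of a faithful normal trace $\tau$. Here $\E_{\vNm}$ is the restriction of the orthogonal projection $e_{\vNm}$ onto $L^2(\vNm)$. On bounded sets of $\vNn$, the strong topology agrees with the $\|\cdot\|_2$-topology. Moreover, since $\E_{\vNm}(x)^*=\E_{\vNm}(x^*)$ and $\|y^*\|_2=\|y\|_2$ by traciality, strong$^*$ convergence of the bounded sequence $\E_{\vNm_n}(x)$ reduces to $\|\E_{\vNm_n}(x)-\E_{\vNm}(x)\|_2\to0$. So the plan is to prove this $L^2$-convergence, which amounts to strong convergence of the projections $e_{\vNm_n}\to e_{\vNm}$ on the dense subspace $\vNn\subset L^2(\vNn)$.

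\textbf{Step 1 (lower bound, from $\liminf$).} Let $x\in\vNn$ and put $y=\E_{\vNm}(x)\in\vNm=\liminf\vNm_n$. Choose a bounded sequence $y_n\in\vNm_n$ with $y_n\to y$ in $\sostar$, hence $\|y_n-y\|_2\to0$. By the best-approximation property of $e_{\vNm_n}$,
\[
\|x-\E_{\vNm_n}(x)\|_2\le \|x-y_n\|_2\to\|x-\E_{\vNm}(x)\|_2 .
\]
Therefore $\limsup_n\|x-\E_{\vNm_n}(x)\|_2\le\|x-\E_{\vNm}(x)\|_2$.

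\textbf{Step 2 (identify weak limits, from $\limsup$).} The sequence $\E_{\vNm_n}(x)$ is bounded by $\|x\|$. By separability, every subsequence has a further subsequence converging in the weak operator topology to some $z$. By the definition of $\limsup$, $z\in\limsup\vNm_n=\vNm$. For any $m\in\vNm$, choose $m_n\in\vNm_n$ bounded with $m_n\to m$ in $\sostar$. Then
\[
\tau(z m)=\lim_n\tau\bigl(\E_{\vNm_n}(x)m_n\bigr)=\lim_n\tau(x m_n)=\tau(xm).
\]
The first equality holds because the product of a WOT-convergent bounded sequence with a strongly convergent bounded sequence converges weakly. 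Hence $z=\E_{\vNm}(x)$, and so $\E_{\vNm_n}(x)\to\E_{\vNm}(x)$ weakly along the full sequence.

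\textbf{Step 3 (upgrade to $L^2$-convergence).} By Pythagoras, $\|\E_{\vNm_n}(x)\|_2^2=\|x\|_2^2-\|x-\E_{\vNm_n}(x)\|_2^2$. Step 1 then gives
\[
\liminf_n\|\E_{\vNm_n}(x)\|_2\ge\|\E_{\vNm}(x)\|_2 .
\]
Weak lower semicontinuity of the norm gives the reverse inequality for $\limsup$. Combining these, $\|\E_{\vNm_n}(x)\|_2\to\|\E_{\vNm}(x)\|_2$. Together with the weak convergence from Step 2, this yields $\|\E_{\vNm_n}(x)-\E_{\vNm}(x)\|_2\to0$. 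Applying the same argument to $x^*$ gives the $\sostar$ statement.

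\textbf{Main obstacle.} The delicate point is Step 2. The $\limsup$ is defined through all WOT-limits of bounded sequences, and we need the subsequential limits of $\E_{\vNm_n}(x)$ to lie in $\vNm$. This requires checking that the definition may be applied along subsequences; equivalently, that EM-convergence passes to subsequences, which is standard. One could in fact sidestep Step 2: Step 1 alone gives $\limsup_n\|x-\E_{\vNm_n}(x)\|_2\le\|x-\E_{\vNm}(x)\|_2$. The weak-limit identification is only needed to rule out $\E_{\vNm_n}(x)$ approaching elements outside $\vNm$.
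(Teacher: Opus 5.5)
The paper gives no argument for this statement; it is quoted from Haagerup--Winsl{\o}w. Your self-contained route is therefore genuinely different, and its architecture (best approximation, identification of weak cluster points, upgrade to $\|\cdot\|_2$-convergence) is sound. However, Step 3 as written contains an invalid inference.

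Weak lower semicontinuity applied to $\E_{\mathcal{M}_n}(x)\rightharpoonup\E_{\mathcal{M}}(x)$ gives $\|\E_{\mathcal{M}}(x)\|_2\le\liminf_n\|\E_{\mathcal{M}_n}(x)\|_2$. This is the same inequality Step 1 already gave you, not the reverse inequality for $\limsup$. Weak convergence together with a lower bound on norms never forces norm convergence: take $v+e_n$ with $e_n$ orthonormal and orthogonal to $v$. So the upper bound $\limsup_n\|\E_{\mathcal{M}_n}(x)\|_2\le\|\E_{\mathcal{M}}(x)\|_2$ is never established.

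The repair is short, and there are two options. First, you can apply lower semicontinuity to $x-\E_{\mathcal{M}_n}(x)\rightharpoonup x-\E_{\mathcal{M}}(x)$. Together with Step 1 this gives $\|x-\E_{\mathcal{M}_n}(x)\|_2\to\|x-\E_{\mathcal{M}}(x)\|_2$, and hence norm convergence. Second, and more simply, use that $e_{\mathcal{M}_n}$ is a projection, so $\|\E_{\mathcal{M}_n}(x)\|_2^2=\tau(x^*\E_{\mathcal{M}_n}(x))\to\tau(x^*\E_{\mathcal{M}}(x))=\|\E_{\mathcal{M}}(x)\|_2^2$ follows directly from Step 2.

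This also shows that your closing remark is backwards. Step 1 is dispensable, whereas Step 2 is where $\limsup\mathcal{M}_n\subseteq\mathcal{M}$ enters, and it cannot be avoided. For example, the constant sequence $\mathcal{M}_n=\mathcal{N}$ with $\mathcal{M}\subsetneq\mathcal{N}$ satisfies $\mathcal{M}\subseteq\liminf\mathcal{M}_n$ and the Step 1 bound, yet the conclusion fails.

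As for the obstacle you flag in Step 2, your resolution is right for the actual Effros--Mar\'echal topology, where $\limsup\mathcal{M}_n=(\liminf\mathcal{M}_n')'$. The cleanest justification is that every weak cluster point $z$ of a bounded sequence $x_n\in\mathcal{M}_n$ lies in $(\liminf\mathcal{M}_n')'$. Indeed, if $y_n\in\mathcal{M}_n'$ converge strongly$^*$ to $y$, then the relation $x_{n_k}y_{n_k}=y_{n_k}x_{n_k}$ passes in the weak operator topology to $zy=yz$. Alternatively, passing to a subsequence enlarges both $\liminf\mathcal{M}_n$ and $\liminf\mathcal{M}_n'$, hence shrinks $\limsup\mathcal{M}_n$, so subsequences of EM-convergent sequences converge to the same limit.

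You should state this explicitly rather than call it standard. With the paper's literal definition (WOT-limits of full sequences only), both the subsequence claim and the proposition itself fail. Take $\mathcal{M}_{2k}=A$ and $\mathcal{M}_{2k+1}=B$ with $A\not\subseteq B$. This sequence would ``converge'' to $A\cap B$, while $\E_{\mathcal{M}_n}(x)$ alternates between $\E_A(x)$ and $\E_B(x)$.
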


\subsection{Hypertraces}
To ultimately bridge the geometric concept of amenability with our dynamical framework, we rely on hypertraces, which serve as the definitive algebraic witness to amenability.
Recall that for an inclusion of von Neumann algebras $\mathcal{M} \le \mathbb{B}(\mathcal{H})$, a state $\varphi$ on $\mathbb{B}(\mathcal{H})$ is called an $\mathcal{M}$-hypertrace if $\varphi(x m) = \varphi(m x)$ for all $x \in \mathbb{B}(\mathcal{H})$ and $m \in \mathcal{M}$. In the context of group von Neumann algebras $\mathcal{M} \le L(\Gamma) \le \mathbb{B}(\ell^2(\Gamma))$, the space of hypertraces restricting to the canonical trace $\tau_0$ on $\mathcal{M}$ is denoted $\mathrm{Hype}_{\tau_0}(\mathcal{M})$. It was shown in \cite{amrutam2025amenable} that $\mathcal{M}$ is an amenable von Neumann algebra if and only if $\mathrm{Hype}_{\tau_0}(\mathcal{M})$ is non-empty.

\subsection{Group von Neumann algebra} The primary geometric objects of our study are the von Neumann algebras associated with discrete groups, which provide the canonical setting for translating subgroup dynamics into operator algebras. Let $\ell^2(\Gamma)$ be the space of square summable $\mathbb{C}$-valued functions on $\Gamma$. There is a natural action $\Gamma\curvearrowright \ell^2(\Gamma)$ by left translation:
\[\lambda_g\xi(h):=\xi(g^{-1}h), \xi \in \ell^2(\Gamma), g,h \in \Gamma\]
The group von Neumann algebra $L(\Gamma)$ is generated (as a von Neumann algebra inside $\mathbb{B}(\ell^2(\Gamma))$) by the left regular representation $\lambda$ of $\Gamma$. The group von Neumann algebra $L(\Gamma)$ comes equipped with a canonical trace $\tau_0:L(\Gamma)\to\mathbb{C}$ defined by 
\[\tau_0\left(\lambda_g\right)=\left\{ \begin{array}{ll}
0 & \mbox{if $g\ne e$}\\
1 & \mbox{if $g=e$}\end{array}\right\}\]

\subsection{Conditional Expectation}
Let $\M\le L(\Gamma)$ be a subalgebra. It is well known that there is a trace preserving normal faithful canonical conditional expectation $\mathbb{E}_{\M}: L(\Gamma)\to\M$ which is unique up to $\tau_0$-invariance (see \cite[Theorem~9.1.2]{ADP}). 

When a subalgebra is conjugated by a group element, its associated conditional expectation transforms naturally. We formalize this equivariance below, as it forms the mechanical backbone for studying orbit closures.
\begin{lemma}
\thlabel{canonicalconditionalexpectation}
Let $\M\le L(\Gamma)$ be a von-Neumann subalgebra, and $\mathbb{E}_{\M}: L(\Gamma)\to\M$, the canonical conditional expectation. Then, $\mathbb{E}_{\M^s}: L(\Gamma)\to\lambda(s)\M\lambda(s)^*$ defined by \[\mathbb{E}_{\M^s}(x)= \lambda(s)\mathbb{E}_{\M}\left(\lambda(s)^*x\lambda(s)\right)\lambda(s)^*\]  
is the canonical conditional expectation onto $\lambda(s)\M\lambda(s)^*$.
\begin{proof}
For $x\in\M$, we see that  $\mathbb{E}_{\M^s}\left(\lambda(s)x\lambda(s)^*\right)=\lambda(s)\E_{\M}(x)\lambda(s)^*=\lambda(s)x\lambda(s)^*$ which in turn shows that $\mathbb{E}_{\M^s}$ is identity on $\lambda(s)\M\lambda(s)^*$. Now, for any $b_1=\lambda(s)m_1\lambda(s)^*$ and $b_2=\lambda(s)m_2\lambda(s)^*$  with $m_1,m_2\in\M$ and $x\in L(\Gamma)$, we see that
\begin{align*}
\mathbb{E}_{\M^s}\left(b_1xb_2\right)&:=\lambda(s)\mathbb{E}_{\M}\left(\lambda(s)^*\lambda(s)m_1\lambda(s)^*x\lambda(s)m_2\lambda(s)^*\lambda(s)\right)\lambda(s)^*\\&= \lambda(s)\mathbb{E}_{\M}\left(m_1\lambda(s)^*x\lambda(s)m_2\right)\lambda(s)^*\\&=\lambda(s)m_1\mathbb{E}_{\M}\left(\lambda(s)^*x\lambda(s)\right)m_2\lambda(s)^*\\&=\left(\lambda(s)m_1\lambda(s)^*\right)\lambda(s)\mathbb{E}_{\M}\left(\lambda(s)^*x\lambda(s)\right)\lambda(s)^*\left(\lambda(s)m_2\lambda(s)^*\right)\\&=b_1\E_{\M^s}(x)b_2.   
\end{align*}
For any positive element $x^*x\in L(\Gamma)$, since $\E_{\M}$ is a conditional expectation, we see that $\E_{\M}(\lambda(s)^*x^*x\lambda(s))=\E_{\M}\left((x\lambda(s))^*(x\lambda(s))\right)$ is a positive element and hence, is of the form $y^*y$ for some $y\in \M$. Therefore, $\E_{\M^s}(x^*x)=\lambda(s)y^*y\lambda(s)^*=\left(y\lambda(s)^*\right)^*\left(y\lambda(s)^*\right)\ge 0$. This shows that $\E_{\M^s}$ takes positive elements of $L(\Gamma)$ to positive elements of $\lambda(s)\M\lambda(s)^*$. Clearly, $\E_{\M^s}(\lambda(e))=\lambda(e)$. We now show that $\E_{\M^s}$ is $\tau_0$-invariant, i.e., $\tau_0\circ\E_{\M^s}=\tau_0$ which will complete the proof. Towards that end for any $x\in L(\Gamma)$, we see that
\begin{align*}
\tau_0\circ\E_{\M^s}(x)=\tau_0\left(\lambda(s)\mathbb{E}_{\M}\left(\lambda(s)^*x\lambda(s)\right)\lambda(s)^*\right)= \tau_0\left(\mathbb{E}_{\M}\left(\lambda(s)^*x\lambda(s)\right)\right)   
\end{align*}
Since $\E_{\M}: L(\Gamma)\to\M$ is the canonical conditional expectation, $\tau_0\circ\E_{\M}=\tau_0$. This shows that 
\[\tau_0\circ\E_{\M^s}(x)=\tau_0\left(\mathbb{E}_{\M}\left(\lambda(s)^*x\lambda(s)\right)\right)=\tau_0\left(\lambda(s)^*x\lambda(s)\right)=\tau_0(x).\]The proof is complete.
\end{proof}
\end{lemma}

\subsection{Non-compactness of \texorpdfstring{$\text{SubAlg}(\mathcal{M})$}{} under the EM-topology}
A fundamental divergence between the geometric study of subgroups and the operator-algebraic study of subalgebras lies in the compactness of their respective state spaces. While the Chabauty topology on subgroups is compact, the Effros-Mar\'{e}chal topology exhibits severe topological pathologies.

We first illustrate this lack of compactness in the highly non-commutative setting of free group factors, demonstrating that sequences of conjugated maximal abelian subalgebras can fail to converge.
\begin{lem}
Let $B=\{\sum_{i=1}^ns_i+s_i^{-1}\}''$ be the radial von Neumann subalgebra in $L(F_n)$, where $n\geq 2$ and $\{s_1,\ldots, s_n\}$ is a set of free generators in $F_n$. Then $\{s_1^kBs_1^{-k}: k\geq 1\}$ has no convergent subnet in $\text{SubAlg}(L(F_n))$. Thus, $\text{SubAlg}(L(F_n))$ is not compact under EM-topology. Hence, for any separable tracial von Neumann algebra $M$ containing the free group factor $L(F_n)$, $\text{SubAlg}(M)$ is not compact under the EM-topology.
\end{lem}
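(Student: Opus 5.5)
Let $B_k:=s_1^kBs_1^{-k}$ and $h:=\sum_{i=1}^n(s_i+s_i^{-1})$, so $B=\{h\}''$ and $B_k=\{s_1^k h s_1^{-k}\}''$. I will argue by contradiction. Suppose a subnet $B_{k_\alpha}$ converges in the EM-topology to some $\mathcal{M}\in\suba(L(F_n))$. By Proposition~\ref{conditionalexpectationconvergence} (applied to nets, which the Haagerup--Winsløw argument allows in the separable finite setting), $\E_{B_{k_\alpha}}(x)\to\E_{\mathcal{M}}(x)$ in $\so^*$ for every $x$. The strategy is to show that the $\liminf$ contains only scalars, while the $\limsup$ is strictly larger than $\mathbb{C}$; this contradicts $\liminf=\limsup$.

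\textbf{Liminf is trivial.} Since $s_1^k$ are unitaries in $L(F_n)$, Lemma~\ref{canonicalconditionalexpectation} gives $\E_{B_k}(\lambda(g))=s_1^k\E_B(s_1^{-k}gs_1^k)s_1^{-k}$. The range of $\E_B$ consists of radial elements: $\E_B(\lambda(g))=\chi_{|g|}/\|\chi_{|g|}\|_2^2$, where $\chi_m$ is the sum of all words of length $m$. Its $2$-norm is $\|\chi_m\|_2^{-1}$, and $\|\chi_m\|_2=\sqrt{\#\{|g|=m\}}$ grows exponentially in $m$. For fixed $g\ne e$, the length $|s_1^{-k}gs_1^k|$ tends to infinity unless $g$ is a power of $s_1$. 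So $\|\E_{B_k}(\lambda(g))\|_2\to0$ for all $g\notin\langle s_1\rangle\setminus\{e\}$. For $g=s_1^j$ with $j\ne 0$, the length is $|j|$, which is fixed, but its $2$-norm is still $\le\|\chi_{|j|}\|_2^{-1}<1$. The $\so^*$-limit $\E_{\mathcal M}(\lambda(s_1^j))$ is therefore $s_1^{k}\chi_{|j|}s_1^{-k}/\|\chi_{|j|}\|_2^2$ in the limit. These elements converge weakly but not strongly, because their Fourier coefficients are spread over $s_1^k\{|w|=|j|\}s_1^{-k}$, which moves off to infinity except on the coefficients of $s_1^{\pm j}$. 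The weak limit is therefore $c_j\lambda(s_1^j)$ with $c_j=\#\{|w|=|j|\}^{-1}$-type weights, but its norm $\neq$ the $2$-norm of the approximants. This discrepancy in $\|\cdot\|_2$ contradicts $\so^*$ convergence.

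Equivalently, I will use the cleaner formulation: if $\E_{B_k}(x)\to\E_{\mathcal M}(x)$ strongly, then $\|\E_{B_k}(x)\|_2\to\|\E_{\mathcal M}(x)\|_2$. The weak limit of $\E_{B_k}(\lambda(s_1))=s_1^k h s_1^{-k}/(2n)$ is computed coefficientwise. The coefficient at $s_1^{\pm1}$ is $1/(2n)$, and all other coefficients escape to infinity. The weak limit is thus $(s_1+s_1^{-1})/(2n)$, with $2$-norm $\sqrt2/(2n)$. This is strictly less than $\|h\|_2/(2n)=\sqrt{2n}/(2n)$ since $n\ge2$. Hence no subnet converges strongly, and so no subnet converges in the EM-topology. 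This $2$-norm comparison is the heart of the argument, and I expect it to be the main point requiring care. In particular I must check that a strong limit must equal the weak limit, which is obvious, and that Proposition~\ref{conditionalexpectationconvergence} applies along subnets, for which I would invoke metrizability of the EM-topology on $\suba$ of a separable algebra to pass to subsequences.

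\textbf{Passing to $M\supseteq L(F_n)$.} Non-compactness of $\text{SubAlg}(L(F_n))$ follows immediately. For a separable tracial $M\supseteq L(F_n)$, I consider the same net $B_k\le M$. If it converged in $\text{SubAlg}(M)$ to some $\mathcal{M}$, Proposition~\ref{conditionalexpectationconvergence} in $M$ would give $\E^M_{B_k}(x)\to\E^M_{\mathcal M}(x)$ strongly. For $x=\lambda(s_1)\in L(F_n)$ we have $\E^M_{B_k}(x)=\E^{L(F_n)}_{B_k}(x)$, so the same $2$-norm contradiction applies verbatim.
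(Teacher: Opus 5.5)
Your proof is correct, and it takes a shorter route than the paper's.

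\textbf{Comparison with the paper.} The paper first pins down the hypothetical limit $N$ in three steps.
\begin{enumerate}
\item It shows $N\subseteq L(\langle s_1\rangle)$, using the decay of $\|E_B(s_1^{-k}gs_1^{k})\|_2$ for $g\notin\langle s_1\rangle$.
\item It shows $N\subseteq B$, because conjugation by $s_1^k$ fixes $L(\langle s_1\rangle)$. Hence $N\subseteq B\cap L(\langle s_1\rangle)=\mathbb{C}$.
\item Only then does it derive the contradiction $E_B(\lambda(s_1))=0$.
\end{enumerate}
You never identify the limit; the single test element $\lambda(s_1)$ is enough. Indeed, $E_{B_k}(\lambda(s_1))=s_1^k h s_1^{-k}/(2n)$ has constant $2$-norm $1/\sqrt{2n}$. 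Its weak limit $(\lambda(s_1)+\lambda(s_1^{-1}))/(2n)$ has $2$-norm $1/(\sqrt{2}\,n)$, which is strictly smaller for $n\ge 2$. This escape of mass rules out the strong convergence forced by Proposition~\ref{conditionalexpectationconvergence}.

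\textbf{What each route buys.} Your route has three advantages.
\begin{itemize}
\item It avoids the fact $B\cap L(\langle s_1\rangle)=\mathbb{C}$.
\item It handles the extension to $M$ directly, via $E^M_{B_k}|_{L(F_n)}=E^{L(F_n)}_{B_k}$, instead of citing closedness of $\text{SubAlg}(L(F_n))$ in $\text{SubAlg}(M)$. This uses that the trace of $M$ restricts to $\tau_0$, which holds because $L(F_n)$ is a factor.
\item It makes explicit the passage from subnets to subsequences via metrizability, which the paper glosses over.
\end{itemize}
In exchange, the paper's route shows that $\mathbb{C}$ is the only possible limit point. That fits the non-confinement discussion later in the paper.

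\textbf{What to cut.} Delete the opening liminf/limsup framing and the paragraph headed ``Liminf is trivial''. What you wrote there does not establish that the liminf is trivial, for three reasons.
\begin{itemize}
\item The liminf involves arbitrary bounded sequences $x_k\in B_k$, not just conditional expectations.
\item Your displayed ``$\mathrm{so}^*$-limit'' still depends on $k$.
\item The weak limit of $E_{B_k}(\lambda(s_1^j))$ is $(\lambda(s_1^j)+\lambda(s_1^{-j}))/\#\{|w|=|j|\}$, not a multiple of $\lambda(s_1^j)$ alone.
\end{itemize}
The $2$-norm argument for $x=\lambda(s_1)$ is self-contained, and it is all you need.
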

\begin{proof}
Recall that $L^2(B)$
has an orthogonal basis $\{w_i/\|w_i\|_2\}_{i=0}^{\infty}$, where $w_i\in L(F_n)$ is the sum of all words of length $i$ in $F_n$, see e.g. \cite[\S 11.5]{SS_book}. Note that $w_i$ is self-adjoint for all $i$.

Therefore, for any $y\in L(F_n)$, we have the following holds under $\|\cdot\|_2$-norm.
\begin{align}\label{eq: formula for E_B(y) where B is the radial masa}
    E_B(y)=\sum_{i=0}^{\infty}\tau\left(\frac{yw_i}{\|w_i\|_2}\right)\frac{w_i}{\|w_i\|_2}.
\end{align} 

Assume that $s_1^{j_k}Bs_1^{-j_k}\rightarrow N\in \text{SubAlg}(L(F_n))$ under EM-topology as $k\to\infty$ for some subnet $\{j_k\}$.

Step 1: $N\subseteq L(\langle s_1\rangle)$.

To show this, it suffices to show that $\left\|E_{B}(s_1^{-j_k}gs_1^{j_k})\right\|_2\to 0$ for all $g\not\in \langle s_1\rangle$.

Indeed, assume this holds, then since $s_1^{j_k}Bs_1^{-j_k}\rightarrow N$, we deduce that 
\begin{align}\label{eq: convergence under EM for radial masa using g}
\left\|E_B(s_1^{-j_k}gs_1^{j_k})-s_1^{-j_k}E_N(g)s_1^{j_k}\right\|_2\to 0.
\end{align}
Combining it with the above, we deduce that $\left\|E_N(g)\right\|_2=\left\|s_1^{-j_k}E_N(g)s_1^{j_k}\right\|_2\to 0$. Thus $E_N(g)=0$ for all $g\not\in \langle s_1\rangle$. This shows that $N\subseteq L(\langle s_1\rangle)$.

Next we show that $\left\|E_{B}(s_1^{-j_k}gs_1^{j_k})\right\|_2\to 0$ for all $g\not\in \langle s_1\rangle$. In view of \eqref{eq: formula for E_B(y) where B is the radial masa}, we have
\begin{align*}
\|E_B(s_1^{-j_k}gs_1^{j_k})\|_2=\frac{1}{\|w_{\ell(s_1^{-j_k}gs_1^{j_k})}\|_2}=\frac{1}{\sqrt{2n(2n-1)^{\ell(s_1^{-j_k}gs_1^{j_k})}}},    
\end{align*}
where $\ell(s_1^{-j_k}gs_1^{j_k})$ denotes the word length of $s_1^{-j_k}gs_1^{j_k}$ w.r.t. the standard generating set $\{s_i^{\pm}: 1\leq i\leq n\}$.
Clearly, for any $g\not\in \langle s_1\rangle$, $\ell(s_1^{-j_k}gs_1^{j_k})\rightarrow \infty$ as $k\rightarrow \infty$.

Step 2: $N=\mathbb{C}$.

For any $x\in N$, we have 
$$\left\|E_B(s_1^{-j_k}xs_1^{j_k})-s_1^{-j_k}E_N(x)s_1^{j_k}\right\|_2\to 0.$$
Note that from Step 1, we know that $x\in N\subseteq L(\langle s_1\rangle)$, an abelian subalgebra, and hence $s_1^{-j_k}xs_1^{j_k}=x$ and $s_1^{-j_k}E_N(x)s_1^{j_k}=E_N(x)$, thus the above convergence boils down to the fact that $B\ni E_B(x)=E_N(x)=x$, thus $N\subseteq B$.
Hence $N\subseteq B\cap L(\langle s_1\rangle)=\mathbb{C}$.

Step 3: $N=\mathbb{C}$ leads to a contradiction.

To see this, take $g=s_1$ in \eqref{eq: convergence under EM for radial masa using g} and note that $E_N=\tau$, then we get that $\left\|E_B(s_1)-\tau(s_1)\right\|_2\to 0$. Note that $\tau(s_1)=0$, thus $E_B(s_1)=0$, i.e. $\langle s_1, b\rangle=0$ for any $b\in B$. However, if we take $b=\sum_{i=1}^ns_i+s_i^{-1}$, then we get $\langle s_1,b\rangle=1\neq 0$, a contradiction.

The second part follows since if $L(F_n)\subset M$, then $\text{SubAlg}(L(F_n))$ is closed in $\text{SubAlg}(M)$, say by \cite[Corollary 2.10]{haagerup1998effros}.
\end{proof}
This topological obstruction is not strictly a feature of non-amenability. Non-compactness manifests even in the most basic abelian settings, thereby allowing us to conclude that $\text{SubAlg}(N)$ is not compact for any $II_1$-factor. This is Theorem~\ref{thm:intro-noncompact} from the introduction.
\begin{theorem}
\label{thm:non-compact}
Let $M=L(\mathbb{Z})\cong L^{\infty}(\{0,1\}^{\mathbb{N}}, \mu)$, where $\mu=\mu_0^{\mathbb{N}}$ and $\mu_0$ is the equal probability measure on the two point space $\{0,1\}$, i.e. $\mu_0(\{0\})=\mu_0(\{1\})=\frac{1}{2}$. Then $\text{SubAlg}(M)$ is not compact under the EM-topology. Therefore, for any separable tracial von Neumann algebra $N$ containing $L(\mathbb{Z})$, e.g., any diffuse separable tracial von Neumann algebra $N$, $\text{SubAlg}(N)$ is not compact under the EM-topology.
\end{theorem}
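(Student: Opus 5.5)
Our plan is to exhibit an explicit sequence of subalgebras of $M=L^\infty(\{0,1\}^{\mathbb{N}},\mu)$ which has no EM-convergent subnet, following the three-step pattern of the preceding lemma. Let $x_i$ be the $i$-th coordinate and $u_i=2\cdot 1_{\{x_i=1\}}-1$ the associated Rademacher unitaries; they are independent, self-adjoint, $\tau(u_i)=0$, and the products $u_F=\prod_{i\in F}u_i$ ($F\subset\mathbb{N}$ finite) form an orthonormal basis of $L^2(M)$. Set $B_k=\{u_1u_k\}''$, the two-dimensional subalgebra $\mathbb{C}1+\mathbb{C}u_1u_k$ (for $k\geq 2$). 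Then $E_{B_k}(y)=\tau(y)1+\tau(yu_1u_k)u_1u_k$. We claim that $\{B_k\}$ has no convergent subnet. (If one prefers diffuse subalgebras, one can instead take $B_k=\{u_1u_k\}''\bar\otimes L^\infty$ of the coordinates beyond $k$, but the simple choice should suffice.)

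Suppose a subnet $B_{k_j}\to N$ in the EM-topology. By Proposition~\ref{conditionalexpectationconvergence}, $E_{B_{k_j}}(y)\to E_N(y)$ in $\|\cdot\|_2$ for every $y\in M$. For the basis vectors $y=u_F$, we have $E_{B_k}(u_F)=0$ unless $F=\emptyset$ or $F=\{1,k\}$, and the latter fails for a fixed $F$ once $k$ is large. Hence $E_N(u_F)=\tau(u_F)$ for every $F$, and by density and $L^2$-continuity $E_N=\tau$, that is, $N=\mathbb{C}$. This is the analogue of Steps 1--2 of the preceding lemma.

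To reach a contradiction, note that $u_1u_{k_j}\in B_{k_j}$ is a bounded sequence of unitaries converging weakly to $0$ (indeed $\langle u_1u_k\xi,\eta\rangle\to 0$ by orthonormality), so weak limits alone do not contradict $N=\mathbb{C}$. Instead we use elements that do not vanish in the limit: the projections $p_k=(1+u_1u_k)/2\in B_k$ have trace $1/2$, and for large $k$ they are approximately independent of any fixed element. Following the proof of Step 3, we exploit the fact that EM-convergence to $\mathbb{C}$ forces $\limsup B_{k_j}=\mathbb{C}$, yet the weak limit of $p_{k_j}$ is $1/2$, which is consistent with this. So a finer argument is needed, and this is the main obstacle: we must choose the sequence so that the conditional-expectation limit and the liminf/limsup description genuinely conflict.

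The fix we propose is to let the $B_k$ carry a fixed piece of information that is lost in the limit. Take $B_k=\{u_1+u_k\}''$, which is the three-dimensional algebra generated by the spectral projections of $u_1+u_k$ onto the eigenvalues $\pm 2$ and $0$. Then $E_{B_k}(u_1)=\tfrac12(u_1+u_k)$, because $u_1$ projects onto the span of $u_1+u_k$ and $u_1u_k$ with coefficients $\tfrac12$ and $0$. Consequently $\|E_{B_k}(u_1)\|_2^2=\tfrac12$ for every $k$. The sequence $\tfrac12(u_1+u_k)$ is not $\|\cdot\|_2$-Cauchy, since its terms have distance $\tfrac12\|u_k-u_{k'}\|_2=\tfrac{1}{\sqrt2}$ from one another. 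This contradicts the $\|\cdot\|_2$-convergence $E_{B_{k_j}}(u_1)\to E_N(u_1)$ given by Proposition~\ref{conditionalexpectationconvergence} along any subnet. Note that this works even without first identifying $N$. The final claim then follows exactly as in the preceding lemma: for $L(\mathbb{Z})\subset N$, $\text{SubAlg}(L(\mathbb{Z}))$ is closed in $\text{SubAlg}(N)$ by \cite[Corollary 2.10]{haagerup1998effros}. Any diffuse separable tracial algebra contains a diffuse abelian subalgebra, which is isomorphic to $L(\mathbb{Z})$.
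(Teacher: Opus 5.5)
Your final construction is correct, and it reaches the contradiction by a different and somewhat leaner route than the paper.

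\textbf{The paper's route.} The paper uses the two-dimensional algebras $A_n=\mathbb{C}p_n+\mathbb{C}(1-p_n)$ with $p_n=\chi_{\{x_1=x_n=1\}}$. It computes the WOT-limit of $E_{A_n}(f)$ for every $f$, using $p_n\to\frac12 p_1$ weakly. From this it concludes that a limit algebra $Q$ would satisfy $E_Q(f)=\frac{\tau(fa)}{\tau(a)}a+\frac{\tau(fb)}{\tau(b)}b$. It then identifies $Q=\mathbb{C}a+\mathbb{C}b$ and checks that $E_Q(a)\neq a$.

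\textbf{Your route.} You take $B_k=\{u_1+u_k\}''=\mathrm{span}\{1,u_1+u_k,u_1u_k\}$; this is an orthogonal basis, since $(u_1+u_k)^2=2+2u_1u_k$. You compute $E_{B_k}(u_1)=\frac12(u_1+u_k)$ exactly. These elements are pairwise at $\|\cdot\|_2$-distance $1/\sqrt2$. Alternatively, their weak limit $\frac12u_1$ has squared $\|\cdot\|_2$-norm $\frac14$, while $\|E_{B_k}(u_1)\|_2^2=\frac12$ for all $k$. Either way, the conclusion of Proposition~\ref{conditionalexpectationconvergence} fails already for the single element $u_1$, so you never need to identify the putative limit algebra.

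\textbf{Comparison.} The underlying phenomenon is the same in both proofs: part of $E_{B_k}(x)$ escapes weakly to $0$, so the pointwise weak limit of the expectations is not a conditional expectation. Your Walsh-basis computation makes this visible in one line. The paper's route needs the limit formula for every $f$ plus a short computation pinning down $Q$. The passage to general $N$ via \cite[Corollary 2.10]{haagerup1998effros} is identical in both.

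\textbf{Fixes to the write-up.}
\begin{enumerate}
\item Drop the first candidate $B_k=\{u_1u_k\}''$ entirely rather than just flagging it. The claim that it has no convergent subnet is false: $E_{B_k}(y)-\tau(y)=\tau(yu_1u_k)u_1u_k\to0$ in $\|\cdot\|_2$ by Bessel's inequality, so in fact $B_k\to\mathbb{C}$.
\item Proposition~\ref{conditionalexpectationconvergence} is stated for sequences, so it is cleanest to note that the EM-topology here is metrizable (indeed Polish, by Haagerup--Winsl{\o}w). It then suffices to rule out convergent subsequences, which your non-Cauchy argument does directly. The paper's proof makes the same reduction tacitly.
\end{enumerate}
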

\begin{proof}
For each $n\geq 1$. Let $F_n\subset X:=\{0,1\}^{\mathbb{N}}$ be defined by $x\in F_n\Leftrightarrow x_n=1$. Let $E_n=F_1\cap F_n$. Set $p_n=\chi_{E_n}$ and $q_n=\chi_{E_n^c}$. Note that $E_1=F_1$ and $p_1=\chi_{F_1}$.

Note that $\chi_{F_n}\rightarrow \frac{1}{2}$ under the w*-topology on $M$ (viewed as the dual space of $L^1(X,\mu)$); equivalently, $\chi_{F_n}\rightarrow \frac{1}{2}$ under the WOT-topology on $M$. Then $p_n=\chi_{F_1}\chi_{F_n}\rightarrow \chi_{F_1}\frac{1}{2}=\frac{1}{2}p_1$ under the WOT-topology on $M$.

Set $A_n=\mathbb{C}p_n+\mathbb{C}q_n\subset M$. Then we claim that $\{A_n: n\geq 1\}$ has no convergent subnet under the EM-topology on $\text{SubAlg}(M)$.

To see this, first note that it is routine to check that $E_{A_n}(f)=\frac{\tau(fp_n)}{\tau(p_n)}p_n+\frac{\tau(fq_n)}{\tau(q_n)}q_n$ for any $f\in M$.
Write $a=\frac{1}{2}p_1$ and $b=1-a$. Since $p_n\to \frac{1}{2}p_1$ under the WOT-topology on $M$,
we deduce that $E_{A_n}(f)\to \frac{\tau(fa)}{\tau(a)}a+\frac{\tau(fb)}{\tau(b)}b$ under the WOT-topology on $M$. 
Now, assume that $\{A_n\}$
has a subnet converging to some subalgebra $Q\subset M$. Then we get that $E_Q(f)=\frac{\tau(fa)}{\tau(a)}a+\frac{\tau(fb)}{\tau(b)}b$ for any $f\in M$.

We claim that $Q=\mathbb{C}a+\mathbb{C}b$.

Indeed, $\subseteq$ holds by the above formula for $E_Q(f)$; on the other hand, if we take $f=b-\frac{\tau(b^2)}{\tau(b)}$, then  $\tau(fb)=0$ and $E_Q(f)=\frac{\tau(ba)-\frac{\tau(b^2)\tau(a)}{\tau(b)}}{\tau(a)}a=\frac{\frac{1}{8}-\frac{\frac{5}{8}}{\frac{3}{4}}\frac{1}{4}}{\frac{1}{4}}a=\frac{-1}{3}a\in Q$, thus $a\in Q$. Similarly, take $f=a-\frac{\tau(a^2)}{\tau(a)}$, then $\tau(fa)=0$ and $E_Q(f)=\frac{-1}{3}b\in Q$ and hence $b\in Q$. Therefore, $\mathbb{C}a+\mathbb{C}b\subseteq Q\subseteq \mathbb{C}a+\mathbb{C}b$, thus $Q=\mathbb{C}a+\mathbb{C}b$.

Then $a=E_Q(a)=\frac{\tau(a^2)}{\tau(a)}a+\frac{\tau(ab)}{\tau(b)}b=\frac{1}{2}a+\frac{1}{6}b$, i.e. $b=3a$, i.e. $1-\frac{1}{2}p_1=\frac{3}{2}p_1$, i.e. $p_1=\frac{1}{2}$, a contradiction.

For the second part, if $N$ is a diffuse separable tracial von Neumann algebra, then every MASA in it is diffuse and *-isomorphic to $L(\mathbb{Z})$ \cite[Corollary 3.5.3]{SS_book}. Since $\text{SubAlg}(L(\mathbb{Z}))$
is closed in $\text{SubAlg}(N)$ by \cite[Corollary 2.10]{haagerup1998effros},
this shows that $\text{SubAlg}(N)$ is not compact under the EM-topology.
\end{proof}

\section{Confined Subalgebras and their basic properties}
\label{sec:confined}
We are now ready to introduce the central object of this paper. The following definitions provide a direct operator-algebraic translation of confined subgroups, capturing subalgebras whose conjugacy classes remain uniformly bounded away from the trivial subalgebra. Our naming convention is motivated by \cite{bader2024spectral}.
\begin{definition}[Confined Subalgebra]
A subalgebra $\mathcal{M} \le L(\Gamma)$ is called confined if 
\[\mathbb{C}\not\in \overline{\left\{\lambda(g)\mathcal{M}\lambda(g)^*: g\in\Gamma\right\}}^{\text{EM-topology}}=\overline{\left\{\mathcal{M}^g: g\in\Gamma\right\}}^{\text{EM-topology}}\]
If a subalgebra $\M\le L(\Gamma)$ is not confined, we sometimes refer to it as being \say{unconfined} or \say{non-confined} by an abuse of the terminology.
\end{definition}
In practice, verifying convergence to the trivial subalgebra is most effectively done using the $L^2$-norm and conditional expectations. We record the following operational equivalences, which will serve as our primary computational tools moving forward.
\begin{prop}
\label{prop:easyobservations}
Let $\mathcal{M}\leq L(\Gamma)$ be a von Neumann subalgebra. Let $s_i\in \Gamma$ be a sequence. Then the following are equivalent.
\begin{itemize}
    \item[(1)] $s_i\mathcal{M}s_i^{-1}\rightarrow \mathbb{C}$ in EM-topology;
    \item[(2)] $E_{s_i\mathcal{M}s_i^{-1}}(x)\rightarrow \tau(x)$ under $\|\cdot \|_2$-norm for all $x\in L(\Gamma)$, where $E: L(\Gamma)\rightarrow \mathcal{M}$ denotes the trace preserving conditional expectation.
    \item[(3)] $E_{\mathcal{M}}(s_i^{-1}xs_i)\rightarrow \tau(x)$ under $\|\cdot \|_2$-norm for all $x\in L(\Gamma)$;
    \item[(4)] $E_{\mathcal{M}}(s_i^{-1}gs_i)\rightarrow 0$ under $\|\cdot\|_2$-norm for all $e\neq g\in \Gamma$. 
\end{itemize}
\end{prop}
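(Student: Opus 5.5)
The cycle I would prove is (1)$\Rightarrow$(2)$\Rightarrow$(3)$\Rightarrow$(4)$\Rightarrow$(2)$\Rightarrow$(1).

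\emph{(1)$\Rightarrow$(2).} Proposition~\ref{conditionalexpectationconvergence} gives $E_{s_i\mathcal{M}s_i^{-1}}(x)\to E_{\mathbb{C}}(x)=\tau(x)$ in the strong$^*$ topology. Strong convergence implies convergence in $\|\cdot\|_2$, since $\|y\|_2=\|y\delta_e\|$.

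\emph{(2)$\Leftrightarrow$(3).} By \thref{canonicalconditionalexpectation} we have $E_{s_i\mathcal{M}s_i^{-1}}(x)=s_iE_{\mathcal{M}}(s_i^{-1}xs_i)s_i^{-1}$. Conjugation by a unitary preserves $\|\cdot\|_2$ and fixes scalars, so the two statements coincide.

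\emph{(3)$\Leftrightarrow$(4).} For (3)$\Rightarrow$(4), take $x=\lambda(g)$ with $g\neq e$, so that $\tau(x)=0$. For (4)$\Rightarrow$(3), first note that (4) is trivially true for $g=e$ in the form $E_{\mathcal{M}}(1)=1=\tau(1)$. By linearity, (3) then holds for $x$ in the group algebra $\mathbb{C}\Gamma$. For general $x\in L(\Gamma)$, choose $y\in\mathbb{C}\Gamma$ with $\|x-y\|_2<\epsilon$. Since $E_{\mathcal{M}}$ is $\|\cdot\|_2$-contractive and conjugation is $\|\cdot\|_2$-isometric,
\[\|E_{\mathcal{M}}(s_i^{-1}xs_i)-\tau(x)\|_2\le 2\epsilon+\|E_{\mathcal{M}}(s_i^{-1}ys_i)-\tau(y)\|_2 .\]
Letting $i\to\infty$ and then $\epsilon\to0$ gives (3).

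\emph{(2)$\Rightarrow$(1).} This is the main point, since Proposition~\ref{conditionalexpectationconvergence} is stated in only one direction. I would verify the definition of EM convergence directly; set $\mathcal{M}_i=s_i\mathcal{M}s_i^{-1}$.

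For $\liminf\mathcal{M}_i\supseteq\mathbb{C}$, take the constant sequence $x_i=c\in\mathcal{M}_i$. Since $\mathbb{C}\subseteq\liminf\subseteq\limsup$, it then suffices to show $\limsup\mathcal{M}_i\subseteq\mathbb{C}$.

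Let $x_i\in\mathcal{M}_i$ with $\sup_i\|x_i\|<\infty$ and $x_i\to x$ in WOT. For any $y\in L(\Gamma)$,
\[\tau(x_iy^*)=\tau\bigl(E_{\mathcal{M}_i}(x_iy^*)\bigr)=\tau\bigl(x_iE_{\mathcal{M}_i}(y^*)\bigr).\]
By (2) and the uniform bound on $\|x_i\|$, this differs from $\tau(x_i)\tau(y^*)$ by at most $\sup_i\|x_i\|\,\|E_{\mathcal{M}_i}(y^*)-\tau(y^*)\|_2\to0$.

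WOT convergence gives $\tau(x_iy^*)\to\tau(xy^*)$ and $\tau(x_i)\to\tau(x)$. Hence $\tau(xy^*)=\tau(x)\tau(y^*)$ for all $y$, that is, $\langle x-\tau(x),y\rangle_{\tau}=0$ for all $y\in L(\Gamma)$. Taking $y=x-\tau(x)$ shows $x=\tau(x)1$. So every WOT-limit is scalar, and the von Neumann algebra they generate is $\mathbb{C}$. Therefore $\liminf\mathcal{M}_i=\limsup\mathcal{M}_i=\mathbb{C}$, which is (1).

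The step needing most care is this last one. It relies on the WOT-limits in the definition of $\limsup$ being taken over uniformly bounded sequences, which is guaranteed because the sequences lie in $\ell^\infty(\mathbb{N},\mathcal{M}_i)$. That is exactly what makes the estimate $\|x_i\|\,\|E_{\mathcal{M}_i}(y^*)-\tau(y^*)\|_2$ usable.
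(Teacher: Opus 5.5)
Your proof is correct. The paper states this proposition without proof. It treats the result as routine, using Proposition~\ref{conditionalexpectationconvergence}, the conjugation formula $\mathbb{E}_{\mathcal{M}^s}(x)=\lambda(s)\mathbb{E}_{\mathcal{M}}(\lambda(s)^*x\lambda(s))\lambda(s)^*$, and $\|\cdot\|_2$-density of $\mathbb{C}\Gamma$. Your implications (1)$\Rightarrow$(2), (2)$\Leftrightarrow$(3) and (3)$\Leftrightarrow$(4) are exactly these.

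The one genuine difference is (2)$\Rightarrow$(1). The paper quotes only the forward half of the Haagerup--Winsl{\o}w result, so it implicitly relies on the two-sided form: EM-convergence in a finite von Neumann algebra is equivalent to pointwise convergence of the conditional expectations. You instead verify the $\liminf/\limsup$ definition by hand. Your identity $\tau(x_iy^*)=\tau(x_i\mathbb{E}_{\mathcal{M}_i}(y^*))$, together with the uniform bound on $\|x_i\|$, correctly forces every weak limit to be a scalar, and it makes the argument self-contained.

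What the citation buys is independence from the exact form of $\limsup$. Your argument is tailored to the paper's description of $\limsup$ as the von Neumann algebra generated by weak-operator limits of bounded sequences. It transfers verbatim to limits taken along subsequences, since (2) passes to subsequences. Under Haagerup--Winsl{\o}w's original definition $\limsup_i\mathcal{M}_i=(\liminf_i\mathcal{M}_i')'$, however, showing that all weak limits are scalars does not by itself bound that algebra from above. In that formulation you would instead show $\liminf_i\mathcal{M}_i'=\mathbb{B}(\ell^2(\Gamma))$. This holds because $\liminf_i\mathcal{M}_i'$ is a von Neumann algebra with the following two ingredients:
\begin{itemize}
\item it contains the right regular representation, which lies in every $\mathcal{M}_i'$;
\item it contains the projection onto $\mathbb{C}\delta_e$, which by (2) is the strong limit of the Jones projections $e_{\mathcal{M}_i}\in\mathcal{M}_i'$.
\end{itemize}
Together these generate $\mathbb{B}(\ell^2(\Gamma))$.
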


\begin{remark}
We emphasize that this confineness should be understood as a property for the conjugacy action $\Gamma\curvearrowright \text{SubAlg}(L(\Gamma))$, and it is not clear to us whether it is preserved in general after taking  conjugation of the subalgebra by unitaries from $L(\Gamma)$.
Instead of choosing unitaries from just the group elements,  we may consider the modified version of confined subalgebra $\mathcal{M}\leq L(\Gamma)$ by requiring 
$$\mathbb{C}\not\in \overline{\{u\mathcal{M}u^*: u\in\mathcal{U}(L(\Gamma))\}}^{\text{EM-topology}}.$$ Formulating this way, \cite[Corollary 1.2]{Popa_rims} actually shows that any $Q\leq L(\Gamma)$ is not confined in this modified version provided that $L(\Gamma)\not\prec_{L(\Gamma)}Q$ (by taking $B=L(\Gamma)$ there). 
\begin{proof}
Indeed, according to \cite[Corollary 1.2]{Popa_rims}, we know there is a unitary $u=(u_n)\in L(\Gamma)^{\omega}$ such that $uL(\Gamma)u^*\perp \mathcal{M}^{\omega}$. This implies that for any $x\in L(\Gamma)$, we have $E_{\mathcal{M}^{\omega}}(uxu^*)=\tau(x)$, i.e. $\tau(x)=\lim_{n\to\omega}E_{\mathcal{M}}(u_nxu_n^*)$ for any $x\in L(\Gamma)$.
That is, $\{n\in\mathbb{N}:~\|E_{\mathcal{M}}(u_nxu_n^*)-\tau(x)\|_2<\epsilon\}\in\omega$ for any $\epsilon>0$ and any $x\in L(\Gamma)$. We now claim that we can choose a subsequence $n_i\rightarrow \infty$ to make $\lim_{i\to\infty}E_{\mathcal{M}}(u_ixu_i^*)=\tau(x)$ under $\|\cdot\|_2$-norm for any $x\in L(\Gamma)$. To see this, write $\Gamma=\{g_k: k\geq 1\}$. It suffices to prove the claim for any $x\in\Gamma$.

For each $N\geq 1$, note that $A_N:=\{n\in\mathbb{N}:~\|E_{\mathcal{M}}(u_ng_ju_n^*)-\tau(g_j)\|_2<\frac{1}{N}~\forall~1\leq j\leq N\}\in\omega$ and hence $A_N$ is an infinite subset in $\mathbb{N}$. Clearly, $A_1\supseteq A_2\supseteq A_3\cdots$. Then  pick any $n_j\in A_j\setminus A_{j+1}$ for each $j\geq 1$ such that $\lim_{i\to\infty}n_j=\infty$.
\end{proof}
  
\end{remark}
However, the following shows that the above modified version is not a good definition to be considered as a generalization of the definition of confineness for subgroups. Indeed, for subgroups, confineness is strictly weaker than finite index for many examples, e.g., in $\Gamma=F_n$, every normal subgroup, not necessarily of finite index,  is confined. 

\begin{prop}
    Let $\Lambda<\Gamma$ be countable groups. Then $L(\Lambda)$ is confined in $L(\Gamma)$ in the above sense iff $[\Gamma: \Lambda]<\infty$.
\end{prop}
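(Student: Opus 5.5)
We prove both directions of the claim that $L(\Lambda)$ is confined in the modified sense, i.e. $\mathbb{C}\notin\overline{\{uL(\Lambda)u^*: u\in\mathcal{U}(L(\Gamma))\}}^{\text{EM}}$, if and only if $[\Gamma:\Lambda]<\infty$. The easy direction ($\Leftarrow$) uses a finite-index bound on conditional expectations. The converse comes from the preceding Remark (Popa's result) together with a standard intertwining computation.

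\textbf{Finite index implies confined.} Suppose $[\Gamma:\Lambda]=k<\infty$ and write $\Gamma=\bigsqcup_{j=1}^k g_j\Lambda$. Put $Q=L(\Lambda)$ and $P_j=E_Q(\lambda(g_j)^*\,\cdot\,)$.

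1. We use the Pimsner--Popa type inequality
\[E_Q(x)\ge k^{-1}x \quad\text{for } x\ge 0.\]
Concretely, every $x\in L(\Gamma)$ decomposes as $x=\sum_j \lambda(g_j)E_Q(\lambda(g_j)^*x)$. Writing $\xi_j=\lambda(g_j)E_Q(\lambda(g_j)^*\xi)$, which are mutually orthogonal vectors, Cauchy--Schwarz gives
\[\|\xi\|_2^2\le k\,\max_j\|E_Q(\lambda(g_j)^*\xi)\|_2^2\le k\sum_j\|E_Q(\lambda(g_j)^*\xi)\|_2^2 .\]

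2. For any unitary $u$, the same bound holds for $uQu^*$, since $E_{uQu^*}=\mathrm{Ad}(u)\circ E_Q\circ \mathrm{Ad}(u^*)$ (the argument of Lemma~\thref{canonicalconditionalexpectation} works verbatim for arbitrary unitaries).

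3. Suppose a net $u_iQu_i^*\to\mathbb{C}$. By Proposition~\ref{conditionalexpectationconvergence}, $E_{u_iQu_i^*}(x)\to\tau(x)$ strongly. Take $x=p$ a projection with $\tau(p)<1/k$, which exists unless $\Gamma$ is finite; if $\Gamma$ is finite, $\mathbb{C}$ is not EM-close to $Q$ when $Q\ne\mathbb{C}$, and the trivial case $\Lambda=\Gamma=\{e\}$ must be handled separately.

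4. Then $\tau(E(p)^2)\to\tau(p)^2$. On the other hand, $E(p)\ge k^{-1}p$ and $E(p)\le 1$ give
\[\tau(E(p)^2)\ge \tau\bigl(E(p)^{1/2}k^{-1}pE(p)^{1/2}\bigr)\ge\dots\]
A cleaner route is the bound $\tau(pE(p))\ge k^{-1}\tau(p)$, which follows because $\tau(pE_Q(p))=\|E_Q(p)\|_2^2\ge \|p\|_2^2/k$ by step 1. But $\tau(pE(p))\to\tau(p)^2<\tau(p)/k$, a contradiction.

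So finite index implies confinement in the modified sense.

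\textbf{Infinite index implies not confined.} By the Remark, it suffices to check $L(\Gamma)\not\prec_{L(\Gamma)}L(\Lambda)$ when $[\Gamma:\Lambda]=\infty$. This is the standard group criterion. We need a sequence $g_n\in\Gamma$ with
\[\|E_{L(\Lambda)}(x\lambda(g_n)y)\|_2\to0 \quad\text{for all } x,y\in L(\Gamma).\]
By $\|\cdot\|_2$-density it is enough to take $x=\lambda(a)$, $y=\lambda(b)$, where the condition becomes $a g_n b\notin\Lambda$ eventually.

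The set $\{g:\ agb\in\Lambda\}=a^{-1}\Lambda b^{-1}$ is a single left coset of the conjugate $b\Lambda b^{-1}$ translated. Equivalently, it is the right coset $a^{-1}\Lambda b^{-1}$, a single $(\Lambda^{a},\cdot)$ coset. So we need $g_n$ to escape every finite union of sets of the form $a^{-1}\Lambda b^{-1}$. By Neumann's lemma, a group is never covered by finitely many cosets of infinite-index subgroups, and each $a^{-1}\Lambda b^{-1}=(a^{-1}\Lambda a)a^{-1}b^{-1}$ is a coset of an infinite-index subgroup. Enumerating the pairs $(a,b)$, we choose $g_n$ outside the first $n$ such sets, which gives the required sequence. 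Popa's criterion then gives non-intertwining, and the Remark yields non-confinement.

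\textbf{Main obstacle.} The hard step is the finite-index direction, specifically making the inequality argument of step 4 rigorous uniformly over unitary conjugates, and handling the degenerate finite-group case.
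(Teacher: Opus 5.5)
Your proof is correct and follows essentially the paper's route. The finite-index direction (which the paper dismisses as ``clear'') is the Pimsner--Popa argument of Lemma~\ref{lem: confiness passes to finite index subalgebras}, made uniform over unitary conjugates via $E_{uQu^*}=\mathrm{Ad}(u)\circ E_Q\circ\mathrm{Ad}(u^*)$. For the converse, the paper combines the Remark with the Chifan--Ioana lemma that $L(\Gamma)\prec_{L(\Gamma)}L(\Lambda)$ forces $[\Gamma:\Lambda]<\infty$. Your Popa-criterion argument reproves that lemma correctly, taking $w_n=\lambda(g_n)$ and choosing $g_n$ outside the first $n$ sets $a_m^{-1}\Lambda b_m^{-1}$ via Neumann's covering lemma.

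Two local points need repair.

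\textbf{The justification in step 1.} The ``concretely'' derivation proves nothing. Since the $\xi_j$ are orthogonal, $\|\xi\|_2^2=\sum_j\|E_Q(\lambda(g_j)^*\xi)\|_2^2$ holds exactly, and your chain is just $\sum\le k\max\le k\sum$. It says nothing about the single term $E_Q(\xi)$, and positivity never enters: for $\xi=\lambda(g)$ with $g\notin\Lambda$ one has $E_Q(\xi)=0$. You should either cite Pimsner--Popa, as the paper does, or argue directly as follows. Write $\Gamma=\bigsqcup_{j=1}^k\Lambda h_j$ as a union of right cosets, and let $Q_j$ be the projection of $\ell^2(\Gamma)$ onto $\ell^2(\Lambda h_j)$. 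Then $E_{L(\Lambda)}(x)=\sum_jQ_jxQ_j$, because the matrix entry $\langle x\delta_a,\delta_b\rangle=x_{ba^{-1}}$ survives exactly when $ba^{-1}\in\Lambda$. For $x\ge0$, Cauchy--Schwarz now gives $\langle x\eta,\eta\rangle=\|x^{1/2}\sum_jQ_j\eta\|^2\le k\sum_j\|x^{1/2}Q_j\eta\|^2=k\langle E_{L(\Lambda)}(x)\eta,\eta\rangle$. This is the inequality $E_{L(\Lambda)}(x)\ge k^{-1}x$ that steps 2--4 actually use.

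\textbf{The degenerate case.} It is not something to be ``handled separately''; it is a genuine exception to the statement. For any finite $\Gamma$ and $\Lambda=\{e\}$, the index is finite but $L(\Lambda)=\mathbb{C}$ is trivially unconfined. So the equivalence (like the lemma it rests on) needs $\Gamma$ infinite or $\Lambda\neq\{e\}$. Apart from this case, your step 4 already covers finite $\Gamma$ without a separate argument. Take $p=|\Gamma|^{-1}\sum_{g\in\Gamma}\lambda_g$, a projection with $\tau(p)=1/|\Gamma|$, which satisfies $0<\tau(p)<1/k$ as soon as $\Lambda\neq\{e\}$.
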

\begin{proof}
    $(\Leftarrow)$ is clear.
    $\Rightarrow$: this relies on \cite[Lemma 2.2]{CI18}. Indeed, according to this lemma, we know that $L(\Gamma)\prec_{L(\Gamma)}L(\Lambda)$ implies that there exists some $g\in \Gamma$ such that $[\Gamma: g\Lambda g^{-1}]<\infty$; equivalently, $[\Gamma: \Lambda]<\infty$. Therefore, if we assume $[\Gamma: \Lambda]=\infty$, then $L(\Gamma)\not\prec_{L(\Gamma)}L(\Lambda)$ and hence $L(\Lambda)$ is not confined in the above sense as explained above, thus we get a contradiction.
\end{proof}
Unconfinement is inherited by subalgebras, i.e., any subalgebra of an unconfined von Neumann algebra is itself unconfined.
\begin{lemma}
\label{lem:unconfined}
Let $\mathcal{N}\le \mathcal{M}\le L(\Gamma)$ be an inclusion of von Neumann algebras. If $\mathcal{M}$ is unconfined, then so is $\mathcal{N}$.
\begin{proof} Let $x\in L(\Gamma)$. Since $\mathcal{M}$ is unconfined, we can find a sequence $\{s_n\}\subset\Gamma$ such that $\lambda(s_n)\mathcal{M}\lambda(s_n)^*\to\mathbb{C}$ in EM-topology. Using Proposition~\ref{prop:easyobservations}, we see that
\[\left\|\mathbb{E}_{\mathcal{M}}(\lambda(s_n)^{-1}x\lambda(s_n))-\tau_0(x)\right\|_2\xrightarrow[]{n\to\infty}0.\]
Applying $\mathbb{E}_{\mathcal{N}}$ on both sides, we get that
\[\left\|\mathbb{E}_{\mathcal{N}}\left(\mathbb{E}_{\mathcal{M}}(\lambda(s_n)^{-1}x\lambda(s_n))\right)-\tau_0(x)\right\|_2\xrightarrow[]{n\to\infty}0.\]
Since $\mathbb{E}_{\mathcal{N}}\circ\mathbb{E}_{\mathcal{M}}=\mathbb{E}_{\mathcal{N}}$, we see that
\[\left\|\mathbb{E}_{\mathcal{N}}(\lambda(s_n)^{-1}x\lambda(s_n))-\tau_0(x)\right\|_2\xrightarrow[]{n\to\infty}0.\]
The proof follows by another application of Proposition~\ref{prop:easyobservations}.
\end{proof}
\end{lemma}
We can apply this inheritance property to natural dynamical systems to easily identify classes of non-confined subalgebras arising from stabilizers.
\begin{example}
\label{ex:nonex}
Let $\Gamma\curvearrowright X$ be a minimal action such that there exists a point $x_0\in X$ with $\Gamma_{x_0}=\{e\}$. Then, for any $x\in X$, $\Gamma_x$ is not confined. Let us prove it. Fix any $x\in X$. Since $x_0\in X$ is a free point, $\Gamma_{x_0}=\Gamma_{x_0}^0=\{e\}$, where $\Gamma_{x_0}^0$ denotes the (normal) subgroup of $\Gamma_{x_0}$ consisting of elements $g\in \Gamma_{x_0}$ such that $g$ fixes pointwise a neighborhood of $x$.  Thus $x_0$ is a point of continuity for the stabilizer map $\text{Stab}:X\to\text{Sub}(\Gamma), x\mapsto\Gamma_x$ (see \cite[Lemma~2.2]{le2018subgroup}). Since the action $\Gamma\curvearrowright X$ is minimal, we can find a sequence $\{s_n\}\subset\Gamma$ such that $s_nx\to x_0$. Since $\text{Stab}$ is continuous at $x_0$, we obtain that $\Gamma_{s_nx}=\text{Stab}(s_nx)\to \text{Stab}(x_0)=\Gamma_{x_0}=\{e\}$. This is akin to saying that $s_n\Gamma_xs_n^{-1}\to\{e\}$ in Chabauty topology. Using \cite[Proposition~4.1]{amrutam2025amenable}, we obtain that $\lambda(s_n)L(\Gamma_x)\lambda(s_n)^*\to \mathbb{C}$ in the EM-topology. Moreover, using Lemma~\ref{lem:unconfined}, we see that every subalgebra $\mathcal{N}\le L(\Gamma_x)$ is also not confined. 
\end{example}
Furthermore, we observe that finite-dimensional subalgebras lack the analytical \say{mass} required to sustain confinement inside an i.c.c.\ group.
\begin{lemma}\label{lem: finite dimensional subalgebras are not confined}
Let $\Gamma$ be an i.c.c.\ group and $\mathcal{M}\le L(\Gamma)$, a finite-dimensional non-trivial subalgebra. Then $\mathcal{M}$ is not confined. 
\begin{proof}
Let $p_1,\ldots, p_n$ be a list of central projections in $\mathcal{M}$ such that $\sum_ip_i=1$ and each $p_i\M$ is a (finite dimensional) type I factor, i.e. $p_i\mathcal{M}\cong M_{k_i}(\mathbb{C})$ for some $k_i\geq 1$. Thus $\mathcal{M}\cong \oplus_{i=1}^nM_{k_i}(\mathbb{C})$.

Denote by $\{e_{s,t}^{(i)}: 1\leq s,t\leq k_i\}$ the matrix units inside $M_{k_i}(\mathbb{C})$.
Then note that for any $y\in L(\Gamma)$, we have 
\begin{align*}
    E_{\mathcal{M}}(y)=\sum_{i=1}^np_iy_i, ~\text{where}~
    y_i=\sum_{s,t=1}^{n_i}\tau(p_iy{e_{s,t}^{(i)}}^*)e_{s,t}^{(i)}.
\end{align*}
Since $\Gamma$ is i.c.c., there is a sequence $s_m\rightarrow \infty$ such that $s_m^{-1}Fs_m\cap F'=\emptyset$ for any finite $F\subseteq \Gamma\setminus \{e\}$ and any finite set $F'\subseteq \Gamma$  as $m\rightarrow \infty$. We claim that $s_m\mathcal{M}s_m^{-1}\rightarrow \mathbb{C}$ under the EM-topology.

To show this, it suffices to check that for any $e\neq g\in \Gamma$, any $1\leq i\leq n$ and any $1\leq s,t\leq n_i$, then 
$\tau(p_i(s_m^{-1}gs_m){e_{s,t}^{(i)}}^*)\rightarrow 0$ as $m\rightarrow \infty$.

Note that
$\tau(p_i(s_m^{-1}gs_m){e_{s,t}^{(i)}}^*)=\tau(s_m^{-1}gs_me_{t,s}^{(i)}p_i)$. As an element in $L(\Gamma)$, we may approximate $e_{t,s}^{(i)}p_i$ by $x_{\ell}$ with finite $\text{supp}(x_{\ell}):=F_{\ell}\subseteq \Gamma$ under $\|\cdot\|_2$-norm. Then using the choice of $s_m$, we know that $\tau(s_m^{-1}gs_mx_{\ell})=0$ for large enough $m$, thus we have for any $\epsilon>0$, pick $\ell$ such that $\|e_{t,s}^{(i)}p_i-x_{\ell}\|_2<\epsilon$, then 
\begin{align*}
    \left|\tau(s_m^{-1}gs_me_{t,s}^{(i)}p_i)\right|&\leq
    \left|\tau(s_m^{-1}gs_me_{t,s}^{(i)}p_i)-\tau(s_m^{-1}gs_mx_{\ell})\right|+\left|\tau(s_m^{-1}gs_mx_{\ell})\right|\\
   & \leq \|e_{t,s}^{(i)}p_i-x_{\ell}\|_2+ \left|\tau(s_m^{-1}gs_mx_{\ell})\right|\\
&\leq \epsilon+\left|\tau(s_m^{-1}gs_mx_{\ell})\right|  \rightarrow \epsilon+0~(\text{as}~ m\rightarrow \infty).
\end{align*}
Since $\epsilon$ is arbitrary, this shows that $\lim_{m\to\infty}\tau(s_m^{-1}gs_me_{t,s}^{(i)}p_i)=0$ and hence finishes the proof.
\end{proof}

\end{lemma}
We show that confinement reliably passes down to finite-index von Neumann subalgebras.
\begin{lemma}\label{lem: confiness passes to finite index subalgebras}
Let $\Gamma$ be an infinite group.
Let $\mathcal{M}\leq L(\Gamma)$ be a von Neumann subalgebra such that the Pimsner-Popa index $[L(\Gamma):\mathcal{M}]$ is finite. Then $\mathcal{M}$ is a confined subalgebra in $L(\Gamma)$.   
\end{lemma}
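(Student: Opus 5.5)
We argue by contradiction. Suppose $\mathcal{M}$ is not confined. Then there is a net (sequence, by separability) $s_i\in\Gamma$ with $s_i\mathcal{M}s_i^{-1}\to\mathbb{C}$ in the EM-topology. By Proposition~\ref{prop:easyobservations}, for every $x\in L(\Gamma)$,
\[\|\E_{\mathcal{M}}(s_i^{-1}xs_i)-\tau_0(x)\|_2\to 0.\]
The plan is to play this convergence against the Pimsner--Popa inequality, which is the defining property of finite index: there is $c>0$ with $\E_{\mathcal{M}}(y)\ge c\,y$ for all $y\in L(\Gamma)_+$. Taking traces after squaring (or using the $L^2$ form $\|\E_{\mathcal{M}}(y)\|_2\ge c\|y\|_2$ for positive $y$) converts operator-inequality information into norm information that the convergence above can contradict.

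The key steps are as follows.

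\textbf{Step 1.} Pick a positive element $y\in L(\Gamma)$ with $\tau_0(y)$ small but $\|y\|_2$ not small; we must exploit that $\Gamma$ is infinite. Since conjugation by $s_i$ preserves positivity and $\|\cdot\|_2$, set $y_i=s_i^{-1}ys_i$. Then Pimsner--Popa gives
\[\tau_0\bigl(\E_{\mathcal{M}}(y_i)\,y_i\bigr)\ge c\,\tau_0(y_i^2)=c\|y\|_2^2.\]
This holds because $\E_{\mathcal{M}}(y_i)-cy_i\ge0$, and pairing with the positive $y_i$ gives a nonnegative trace.

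\textbf{Step 2.} On the other hand, $\E_{\mathcal{M}}(y_i)\to\tau_0(y)1$ in $\|\cdot\|_2$. Hence the left-hand side of Step 1 tends to $\tau_0(y)^2$, and we obtain
\[\tau_0(y)^2\ge c\|y\|_2^2.\]

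\textbf{Step 3.} It remains to produce a positive $y$ violating this inequality, i.e.\ with $\|y\|_2^2/\tau_0(y)^2$ arbitrarily large. This is where infiniteness of $\Gamma$ enters: $L(\Gamma)$ is diffuse when $\Gamma$ is infinite. Indeed, $\Gamma$ infinite implies $L(\Gamma)$ has no minimal projections of positive trace in a way that allows projections of arbitrarily small trace; concretely, a masa is diffuse, or we use $\lambda(g)$ for $g$ of infinite order, or finite subgroups of unbounded size, to build projections. Taking $y=p$ a projection with $\tau_0(p)=\delta$ gives $\tau_0(y)^2=\delta^2$ and $\|y\|_2^2=\delta$, so the inequality forces $\delta\ge c$, a contradiction once $\delta<c$.

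The main obstacle is justifying the existence of projections of arbitrarily small trace in $L(\Gamma)$ for a general infinite $\Gamma$. If $\Gamma$ has an element of infinite order, $L(\langle g\rangle)\cong L^\infty(\mathbb{T})$ supplies them. In the torsion case we use arbitrarily large finite subgroups (averaging projections $\frac1{|F|}\sum_{h\in F}\lambda(h)$ have trace $1/|F|$). For an infinite torsion group with bounded finite subgroups we appeal to the general fact that $L(\Gamma)$ has a diffuse part whenever $\Gamma$ is infinite: its atomic part is finite-dimensional only if $\Gamma$ is finite, and otherwise a finite type I summand $\oplus M_{k}(\mathbb{C})$ with infinitely many summands already gives small projections. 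A clean way is to note that $\tau_0$ on $L(\Gamma)$ has no atoms bounded below, since $\dim L(\Gamma)=\infty$ and a finite von Neumann algebra all of whose nonzero projections have trace $\ge c$ is finite-dimensional.
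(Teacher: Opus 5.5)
Your proof is correct and is essentially the paper's argument. Pairing $\E_{\mathcal{M}}(y_i)-c\,y_i\ge 0$ against $y_i$ just reproduces, up to renaming the constant, the $L^2$ form $\|\E_{\mathcal{M}}(y)\|_2^2\ge c\|y\|_2^2$ of the Pimsner--Popa bound that the paper uses; both proofs then pass to the limit to get $\tau_0(y)^2\ge c\|y\|_2^2$ on $L(\Gamma)_+$ and contradict it with a projection of small trace.

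The only difference is in how the small projection is produced. The paper simply asserts that $L(\Gamma)$ is diffuse. Your closing observation is a cleaner, self-contained justification: nonzero projections with trace bounded below force finite-dimensionality, while $\dim L(\Gamma)=\infty$. You can therefore drop the case analysis before it, which is muddled: for infinite $\Gamma$ the atomic part of $L(\Gamma)$ is in fact zero.
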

\begin{proof}
According to \cite{pimsnerpopa}, 
we know that there is a positive number $c>0$ such that $\|E_{\mathcal{M}}(x)\|_2\geq c\|x\|_2$ for all $0\neq x\in L(\Gamma)_+$. Now assume that $\mathcal{M}$ is not confined, i.e., there is a sequence $s_i\in\Gamma$ such that $s_i\mathcal{M}s_i^{-1}\rightarrow \mathbb{C}$ under EM-topology. Then take any $0\neq x\in L(\Gamma)_+$, we get that
\begin{align*}
c\|x\|_2=c\|s_i^{-1}xs_i\|_2\leq \|E_{\mathcal{M}}(s_i^{-1}xs_i)\|_2\overset{i\to\infty}{\rightarrow} \left|\tau(x)\right|.
\end{align*}
That is, we have shown that $\left|\tau(x)\right|\geq c\|x\|_2$ for all $0\neq x\in L(\Gamma)_+$. 

Since $\Gamma$ is an infinite group, it is diffuse and thus contains a sequence of non-zero projections, say  $\{p_i\}$, such that $\lim_{i\to\infty}\tau(p_i)=0$. 
Then we deduce that $\left|\tau(p_i)\right|\geq c\|p_i\|_2=c\sqrt{\tau(p_i)}$, i.e., $\tau(p_i)\geq c^2>0$. Taking $i\to\infty$ and since $p_i$ goes to zero, we get that $0\geq c^2>0$, a contradiction.
\end{proof}

\begin{question}
Let $\Gamma$ be an i.c.c.\ group.
Let $\mathcal{N}\le \mathcal{M}\leq L(\Gamma)$ be von Neumann subalgebras such that the Pimsner-Popa index $[\mathcal{M}:\mathcal{N}]$ is finite. If $\mathcal{M}$ is a confined subalgebra in $L(\Gamma)$, then is $\mathcal{N}$ also confined?
\end{question}
Note that if both $\mathcal{N}$ and $\mathcal{M}$ are group von Neumann algebras defined using subgroups of $\Gamma$, then the above question has an affirmative answer by \cite[Lemma 7.9]{bader2024spectral}.

Note that given a subgroup $\Lambda\leq \Gamma$, $\Lambda$ is confined in $\Gamma$ if and only if $L(\Lambda)$ is confined in $L(\Gamma)$. It is clear that for any finite index subgroup $\Lambda\leq \Gamma$, $L(\Lambda)$ is confined in $L(\Gamma)$.  
While Lemma~\ref{lem: confiness passes to finite index subalgebras} shows that finite index von Neumann subalgebras inherit confinement properties, the structural correspondence between finite index subgroups and finite index subalgebras is not absolute. We demonstrate this below.
\begin{lem}
\label{lem: finite index subalgs not absorbing powers of group elements}
Let $\Gamma$ be a group containing a subgroup $\langle s,t\rangle\cong \langle s\rangle *\langle t\rangle\cong \mathbb{Z}*\frac{\mathbb{Z}}{2\mathbb{Z}}$. Let $p=e_{11}=\frac{1+t}{2}$ and $e_{22}=\frac{1-t}{2}$. Let $e_{12}, e_{21}$ be the other two matrix units in $L(\Gamma)$.  
Denote by $\mathcal{M}=\{x\in L(\Gamma): e_{12}xe_{11}=0, e_{11}xe_{21}=0, e_{11}xe_{11}=e_{12}xe_{21}\}$. Then $\mathcal{M}$ is a von Neumann subalgebra in $L(\Gamma)$ with index 4. Moreover, for any $k\geq 1$, $s^k\not\in \mathcal{M}$.
\end{lem}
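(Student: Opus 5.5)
The plan is to identify $\mathcal{M}$ with the relative commutant of the $2\times 2$ matrix units, $\mathcal{M}=\{e_{ij}: 1\le i,j\le 2\}'\cap L(\Gamma)$, and then read off all three claims from that description. First note that $p=e_{11}=\frac{1+t}{2}$ and $e_{22}=\frac{1-t}{2}$ are projections, since $t^2=e$ and $t=t^*$. They are orthogonal, sum to $1$, and each has trace $\frac12$. We take $e_{12}$ to be a partial isometry with $e_{12}e_{12}^*=e_{11}$ and $e_{12}^*e_{12}=e_{22}$, and set $e_{21}=e_{12}^*$; this is exactly the data "the other two matrix units" provided by the statement.

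Set $P:=e_{11}L(\Gamma)e_{11}$. The standard decomposition writes every $x\in L(\Gamma)$ as
\[
x=\sum_{i,j} e_{i1}x_{ij}e_{1j}, \qquad x_{ij}:=e_{1i}xe_{j1}\in P ,
\]
which gives $L(\Gamma)\cong M_2(P)$. Using $e_{ij}e_{kl}=\delta_{jk}e_{il}$, the three defining conditions become:
\begin{itemize}
\item $e_{12}xe_{11}=0$ is equivalent to $x_{21}=0$;
\item $e_{11}xe_{21}=0$ is equivalent to $x_{12}=0$;
\item $e_{11}xe_{11}=e_{12}xe_{21}$ is equivalent to $x_{11}=x_{22}$.
\end{itemize}
Hence $\mathcal{M}=\{\operatorname{diag}(a,a): a\in P\}=P\otimes 1$. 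This is precisely the commutant of $M_2(\mathbb{C})=\operatorname{span}\{e_{ij}\}$ inside $M_2(P)$. A relative commutant of a self-adjoint set is automatically a unital, weak-operator closed $*$-subalgebra, so $\mathcal{M}$ is a von Neumann subalgebra.

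For the index, I would exhibit the conditional expectation explicitly as
\[
E_{\mathcal{M}}(x)=\tfrac12\sum_{i,j} e_{ij}\,x\,e_{ji}.
\]
A direct check shows that this averages the diagonal entries, $\operatorname{diag}(a,d)\mapsto \operatorname{diag}\bigl(\tfrac{a+d}2,\tfrac{a+d}2\bigr)$, kills the off-diagonal entries, and preserves $\tau_0$. Then $\{\sqrt2\,e_{ij}\}_{i,j}$ is a Pimsner--Popa basis, since $\sum_{i,j}(\sqrt2 e_{ij})E_{\mathcal{M}}\bigl((\sqrt2e_{ij})^*x\bigr)=x$. This gives $[L(\Gamma):\mathcal{M}]=\sum_{i,j}(\sqrt2)^2e_{ij}e_{ij}^*=4$. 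Equivalently, one can use the Pimsner--Popa inequality $E_{\mathcal{M}}(x)\ge \frac14 x$ for $x\ge 0$, together with the standard fact that $[M_2(P):P\otimes 1]=4$.

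For the last claim, the point is that $t=e_{11}-e_{22}$ lies in $\operatorname{span}\{e_{ij}\}$. So if $s^k\in\mathcal{M}$, then $s^k$ commutes with $t$, which in $L(\Gamma)$ means the group elements $s^kts^{-k}$ and $t$ coincide. In the free product $\langle s\rangle*\langle t\rangle$, the word $s^kts^{-k}$ is reduced and different from $t$ when $k\ge1$, which is a contradiction. I expect the only delicate step to be the bookkeeping identifying $\mathcal{M}$ with $P\otimes 1$, in particular keeping track of which products $e_{12}xe_{21}$ and $e_{11}xe_{11}$ pick out which corner entries. The index computation and the non-membership of $s^k$ are routine once that identification is in place.
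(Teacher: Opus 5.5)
Your proof is correct and follows the paper's route. You use the same identification $L(\Gamma)\cong pL(\Gamma)p\,\bar{\otimes}\,M_2(\mathbb{C})$ carrying $\mathcal{M}$ onto $pL(\Gamma)p\otimes\mathbb{C}I_2$, made more explicit through the expectation $\frac12\sum_{i,j}e_{ij}xe_{ji}$ and the Pimsner--Popa basis $\{\sqrt2\,e_{ij}\}$; for $s^k\notin\mathcal{M}$ you use that $\mathcal{M}$ commutes with $t=2e_{11}-1$ instead of the paper's expansion of $e_{11}s^{-k}e_{22}s^ke_{11}$, which is a slicker step needing only $s^kts^{-k}\neq t$ in $\langle s\rangle*\langle t\rangle$.
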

\begin{proof}
First, note that $L(\Gamma)\cong pL(\Gamma)p\bar{\otimes}M_2(\mathbb{C})$ via the natural map $x\overset{\Phi}{\mapsto} \sum_{i,j=1}^2e_{1i}xe_{j1}\otimes e_{ij}$. Then  it is clear that $\Phi(\mathcal{M})=pL(\Gamma)p
\bar{\otimes} \mathbb{C}I_2$, where $I_2$ denotes the $2\times 2$-identity matrix. Therefore, $\mathcal{M}$ has index 4 in $L(\Gamma)$.

Let us now prove the moreover part. Assume that $s^k\in\mathcal{M}$ for some $k\geq 1$, then $e_{12}s^ke_{11}=0$. Hence 
\begin{align*}
    0=(e_{12}s^ke_{11})^*(e_{12}s^ke_{11})
    &=(e_{11}s^{-k}e_{21})(e_{12}s^ke_{11})\\
    &=e_{11}s^{-k}e_{22}s^ke_{11}\\
    &=\frac{1+t}{2}s^{-k}\frac{1-t}{2}s^k\frac{1+t}{2}\\
    &=\frac{1}{8}(1+2t-s^{-k}ts^k-s^{-k}ts^kt-ts^{-k}ts^k-ts^{-k}ts^kt)\neq 0.
\end{align*}
\end{proof}

\begin{lem}\label{lem: PSL(n, Z) contains Z*Z_2 as a subgroup}
Let $G_n=PSL(n, \mathbb{Z})$ for any $n\geq 2$. Then $G_n$ contains a subgroup isomorphic to $\mathbb{Z}*\frac{\mathbb{Z}}{2\mathbb{Z}}$.
\end{lem}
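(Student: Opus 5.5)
The plan is to reduce to the case $n=2$ and then use the classical free product structure of the modular group. Recall that $PSL(2,\mathbb{Z})\cong \mathbb{Z}/2\mathbb{Z} * \mathbb{Z}/3\mathbb{Z}$. It is generated by the images of
\[
S=\begin{pmatrix}0&-1\\1&0\end{pmatrix} \quad\text{(order $2$ in $PSL$)}, \qquad U=\begin{pmatrix}0&-1\\1&1\end{pmatrix} \quad\text{(order $3$ in $PSL$)}.
\]
Write $a$ for the image of $S$ and $b$ for the image of $U$. I will take $t:=a$ and $s:=bab^{-1}a$, or any other element whose normal form starts and ends so that powers never cancel against $a$. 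For a more symmetric choice, take $s=ab$, which has infinite order. To obtain a clean free product I would instead use the Kurosh subgroup theorem. The subgroup $H=\langle a,\; bab^{-1}\rangle$ is generated by two involutions in the free product, and I check that it is the free product $\langle a\rangle *\langle bab^{-1}\rangle\cong \mathbb{Z}/2*\mathbb{Z}/2$. This group is infinite dihedral, which is not what we want. So I would instead take three conjugates of $a$:
\[
a,\qquad bab^{-1},\qquad b^{-1}ab.
\]
A normal form argument shows that they freely generate $\mathbb{Z}/2*\mathbb{Z}/2*\mathbb{Z}/2$. Indeed, any reduced word in them multiplies out to an alternating word in $a$ and $b^{\pm1}$ with no cancellation, because consecutive distinct letters produce $b^{\pm1}b^{\pm1}$ or $b^{\pm 2}=b^{\mp1}\neq e$ in the middle, never $e$.

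Inside $\mathbb{Z}/2*\mathbb{Z}/2*\mathbb{Z}/2=\langle x\rangle*\langle y\rangle*\langle z\rangle$, the element $s:=xy$ has infinite order and $t:=z$ is an involution. I then verify that $\langle xy, z\rangle\cong\mathbb{Z}*\mathbb{Z}/2$. Any reduced alternating word $(xy)^{k_1}z(xy)^{k_2}z\cdots$ with all $k_i\neq0$ (except possibly at the ends) is already a reduced word in the free product of the three $\mathbb{Z}/2$'s. This holds because each block $(xy)^{k}$ is a nonempty alternating word in $x,y$, and the $z$'s separate the blocks. Hence the word is nontrivial.

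For general $n\ge 3$, I embed $SL(2,\mathbb{Z})$ into $SL(n,\mathbb{Z})$ via the block embedding $A\mapsto \mathrm{diag}(A,I_{n-2})$. The issue is that $PSL(n,\mathbb{Z})$ is $SL(n,\mathbb{Z})$ modulo its center $\{\pm I\}\cap SL(n,\mathbb{Z})$, and this center is trivial for odd $n$. So I would avoid passing through $PSL(2)$ and instead find the subgroup directly in $SL(2,\mathbb{Z})$ meeting $\{\pm I\}$ trivially. Torsion in $SL(2,\mathbb{Z})$ is a problem here, since $S^2=-I$. I would pick the involution from a different part of $SL(n,\mathbb{Z})$ for $n\ge3$, or simply note the following. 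The subgroup $F=\langle xy, z\rangle$ lifts to $SL(2,\mathbb{Z})$ with kernel $\{\pm I\}$, so its preimage $\tilde F$ is a central extension of $F$ by $\mathbb{Z}/2$. The image of $\tilde F$ under the block embedding in $PSL(n,\mathbb{Z})$ is $\tilde F/(\tilde F\cap\{\pm I_n\})$. For even $n$, $-I_2\mapsto\mathrm{diag}(-I_2,I_{n-2})\neq -I_n$, so this quotient is $\tilde F$ itself and not $F$.

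The main obstacle is therefore the case $n\ge3$. My first attempt there would be to use the finite-order matrix $\mathrm{diag}(-1,-1,1,\dots)$-type involutions directly. Take an infinite-order element $s$ in the upper $2\times2$ block, namely the lift of $xy$, which is a product of two elements of $PSL$ order $2$, hence hyperbolic and torsion-free. Take $t$ to be an involution in $SL(n,\mathbb{Z})$, for instance a conjugate of $\mathrm{diag}(-1,-1,1,\ldots)$ chosen so that ping-pong applies. I would then run a ping-pong argument on $\mathbb{P}^{n-1}(\mathbb{R})$ to prove $\langle s,t\rangle\cong\mathbb{Z}*\mathbb{Z}/2$, and finally check that this subgroup meets the center trivially.
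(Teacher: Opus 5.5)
Your $n=2$ argument is correct. In $\mathbb{Z}/2\mathbb{Z}*\mathbb{Z}/3\mathbb{Z}=\langle a\rangle*\langle b\rangle$, the conjugates $a$, $bab^{-1}$, $b^{-1}ab$ do freely generate $(\mathbb{Z}/2\mathbb{Z})^{*3}$, since every junction contributes $b^{\pm1}$ or $b^{\pm2}\neq e$. Then $\langle xy,z\rangle\cong\mathbb{Z}*\mathbb{Z}/2\mathbb{Z}$ follows by normal forms. This is only a longer route than the paper's direct choice $\langle tst\rangle*\langle s\rangle$.

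The genuine gap is every $n\ge 3$. The lemma asserts these cases, and $n=3$ is exactly what the paper uses in the corollary for $SL(3,\mathbb{Z})$. You correctly explain why the block embedding $A\mapsto\mathrm{diag}(A,I_{n-2})$ fails: the only involution of $SL(2,\mathbb{Z})$ is the central element $-I_2$, so $SL(2,\mathbb{Z})$ contains no copy of $\mathbb{Z}*\mathbb{Z}/2\mathbb{Z}$, and the block image is $\tilde F$ rather than $F$. What follows, however, is only an intention: ``a conjugate of $\mathrm{diag}(-1,-1,1,\dots)$ chosen so that ping-pong applies'' on $\mathbb{P}^{n-1}(\mathbb{R})$. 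You specify no involution, give no ping-pong sets, and check nothing.

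This step is not routine. The matrix $\mathrm{diag}(A,I_{n-2})$ has eigenvalues $\lambda^{\pm1}$ and $1$ with multiplicity $n-2$. On $\mathbb{P}^{n-1}(\mathbb{R})$ it therefore only contracts points lying away from two repelling projective hyperplanes. You would have to exhibit an explicit integral involution that moves a neighbourhood of the attracting lines off those hyperplanes, and probably pass to a power of $s$. As written, the statement is not proved for any $n\ge 3$.

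The missing idea is to obtain a genuine involution from a determinant $-1$ matrix in a $2\times 2$ block and correct the determinant in another coordinate.

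\textbf{The paper's fix.} It uses the injective homomorphism $GL(2,\mathbb{Z})\to SL(3,\mathbb{Z})$, $A\mapsto\mathrm{diag}(\det A,A)$, applied to $s'=\begin{pmatrix}0&1\\1&0\end{pmatrix}$ and $t'=\begin{pmatrix}1&2\\0&1\end{pmatrix}$. Ping-pong on $\mathbb{C}^2$ with $X=\{|x|<|y|\}$ and $Y=\{|x|>|y|\}$ gives $s'Y\subset X$ and $t'^kX\subset Y$ for $k\neq 0$, hence $\langle s',t'\rangle\cong\mathbb{Z}*\mathbb{Z}/2\mathbb{Z}$. Padding with $I_{n-3}$ handles $n\ge 4$, and the ping-pong conclusion rules out $\pm I_n$.

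\textbf{A fix reusing your $n=2$ subgroup.} The symmetric square $\mathrm{Sym}^2\colon SL(2,\mathbb{Z})\to SL(3,\mathbb{Z})$ is an integral representation with kernel exactly $\{\pm I_2\}$. It therefore embeds $PSL(2,\mathbb{Z})$ into $SL(3,\mathbb{Z})=PSL(3,\mathbb{Z})$. Composing with $B\mapsto\mathrm{diag}(B,I_{n-3})$, whose image meets $\{\pm I_n\}$ only in $I_n$ when $n\ge 4$, settles all $n\ge 3$.
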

\begin{proof}

For $n=2$, it is well-known that $G_2=PSL(2,\mathbb{Z})\cong \frac{\mathbb{Z}}{2\mathbb{Z}}*\frac{\mathbb{Z}}{3\mathbb{Z}}$. It is clear that $G_2$ contains $\mathbb{Z}*\frac{\mathbb{Z}}{2\mathbb{Z}}$ as a subgroup, e.g. $\langle tst\rangle *\langle s\rangle \cong \mathbb{Z}*\frac{\mathbb{Z}}{2\mathbb{Z}}$, where $s$ and $t$ are any order two and respectively order three elements in $G_2$. 

Let $n=3$. Set $s=\begin{pmatrix}
-1&0&0\\
0&0&1\\
0&1&0
\end{pmatrix}, t=\begin{pmatrix}
1&0&0\\
0&1&2\\
0&0&1
\end{pmatrix}$. Note that $s, t\in G_3$, $s$ is an order two element and $t$ has infinite order. We are left to show that $\langle s, t\rangle \cong \mathbb{Z}*\frac{\mathbb{Z}}{2\mathbb{Z}}$.

To see this, let us first show that for $s':=\begin{pmatrix}
0&1\\
1&0
\end{pmatrix}$ and $t'=\begin{pmatrix}
1&2\\
0&1
\end{pmatrix}$, we have $\langle s',t'\rangle\cong \mathbb{Z}*\frac{\mathbb{Z}}{2\mathbb{Z}}$. For this, we can apply the Ping-Pong argument. 

Consider two subsets of $\mathbb{C}^2$, i.e. $X=\{\begin{pmatrix}
x\\
y
\end{pmatrix}: \left|x\right|<\left|y\right|\}$ and $Y=\{\begin{pmatrix}
x\\
y
\end{pmatrix}: \left|x\right|>\left|y\right|\}$. Observe that $s'Y\subset X$ and $(t')^nX\subset Y$ for any $n\neq 0$. Now take  any non-empty word $g$ in $s'$ and $t'$, by conjugating $g$ with $(t')^n$ for large enough $n$ if necessary, we may assume that  $g=(t')^{n_1}s'(t')^{n_2}\cdots s'(t')^{n_k}$ for some $k\geq 1$, where $n_1,\cdots, n_k$ are non-zero integers. Then pick any $x\in X$, we get that 
\begin{align*}
gx=(t')^{n_1}s'(t')^{n_2}\cdots s'(t')^{n_k}x\in (t')^{n_1}s'(t')^{n_2}\cdots s'Y\subset \cdots \subset Y.
\end{align*}
Therefore, $g\neq e$ since $X\cap Y=\emptyset$.

Now, we can finish the proof by observing that whenever we have a non-empty word $g$ in $s, t$, then we may apply the above to deduce that the right-bottom $2\times 2$ corner of $g$ is not an identity matrix, and thus $g\neq e$.

For the general case that $n\geq 4$, we may consider the matrices $s_n=\begin{pmatrix}
I_{n-3}&0\\
0&s
\end{pmatrix}$ and $t_n=\begin{pmatrix}
I_{n-3}&0\\
0&t
\end{pmatrix}$, where $I_{n-3}$ denotes the identity $(n-3)\times (n-3)$ matrix. Note that $s_n,t_n\in SL(n,\mathbb{Z})$ and we identify them as their images in $PSL(n, \mathbb{Z})$. Then the above argument still works by noting that $gx\in Y$ implies, in fact, that $g\neq [I_n]\in PSL(n, \mathbb{Z})$.
\end{proof}

As a direct corollary, we have the following result.

\begin{cor}
$L(SL(3,\mathbb{Z}))$ contains an index 4 von Neumann subalgebra $\mathcal{M}$ such that $\mathcal{M}$ does not contain $L(\Lambda)$ for any finite index subgroup $\Lambda$ in $SL(3,\mathbb{Z})$.
\end{cor}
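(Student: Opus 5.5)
The plan is to combine Lemma~\ref{lem: PSL(n, Z) contains Z*Z_2 as a subgroup} with Lemma~\ref{lem: finite index subalgs not absorbing powers of group elements}. First I need a copy of $\mathbb{Z}*\frac{\mathbb{Z}}{2\mathbb{Z}}$ inside $SL(3,\mathbb{Z})$ itself, not just in $PSL(3,\mathbb{Z})$. Since $3$ is odd, $-I_3\notin SL(3,\mathbb{Z})$, so the center of $SL(3,\mathbb{Z})$ is trivial and $SL(3,\mathbb{Z})=PSL(3,\mathbb{Z})$. Alternatively, one can check directly that the explicit matrices $s,t$ from the proof of Lemma~\ref{lem: PSL(n, Z) contains Z*Z_2 as a subgroup} have determinant $1$; the ping-pong argument there already shows that no nontrivial word in them equals $I_3$. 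I also have to match the roles: the earlier lemma names the order-two element $t$ and the infinite-order element $s$, the reverse of the proof of Lemma~\ref{lem: PSL(n, Z) contains Z*Z_2 as a subgroup}. So I swap the names before applying it.

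Next, apply Lemma~\ref{lem: finite index subalgs not absorbing powers of group elements} with $\Gamma=SL(3,\mathbb{Z})$. This produces a von Neumann subalgebra $\mathcal{M}\le L(\Gamma)$ with Pimsner--Popa index $4$ such that $s^k\notin\mathcal{M}$ for every $k\ge1$, where $s$ has infinite order.

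Now suppose $L(\Lambda)\subseteq\mathcal{M}$ for some finite index subgroup $\Lambda\le\Gamma$. Since $[\Gamma:\Lambda]<\infty$, the cosets $s^k\Lambda$ for $k\ge0$ cannot all be distinct. So $s^i\Lambda=s^j\Lambda$ for some $i<j$, and hence $s^{k}\in\Lambda$ with $k=j-i\ge1$. Then $\lambda(s^k)\in L(\Lambda)\subseteq\mathcal{M}$, which contradicts the lemma.

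The only step that needs care is confirming that the earlier lemma really applies in $SL(3,\mathbb{Z})$. That means checking that the hypothesis ``$\Gamma$ contains $\langle s,t\rangle\cong\mathbb{Z}*\mathbb{Z}/2$'' is satisfied with the correct element playing the order-two role. It also means noting that the nonvanishing expression at the end of that lemma's proof uses freeness, which guarantees that the group elements $1$, $t$, $s^{-k}ts^k$, and so on are distinct. Beyond that, the argument is a direct assembly.
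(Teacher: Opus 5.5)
Your proposal is correct and follows essentially the same route as the paper's proof. You combine the $\mathbb{Z}*\mathbb{Z}/2\mathbb{Z}$ embedding lemma with the index-$4$ construction and then use $s^k\notin\mathcal{M}$ for all $k\ge 1$. You also make explicit three points the paper leaves implicit: that $SL(3,\mathbb{Z})=PSL(3,\mathbb{Z})$, the swap of the names $s,t$, and the pigeonhole argument showing that a finite index $\Lambda$ must contain some $s^k$ with $k\ge 1$.
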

\begin{proof}
By Lemma \ref{lem: PSL(n, Z) contains Z*Z_2 as a subgroup}, we deduce $\Gamma:=SL(3,\mathbb{Z})$ contains some subgroup $\langle s,t\rangle$ isomorphic to $\mathbb{Z}*\frac{\mathbb{Z}}{2\mathbb{Z}}$. We may apply Lemma \ref{lem: finite index subalgs not absorbing powers of group elements} to construct an index 4 von Neumann subalgebra $\mathcal{M}$. According to this lemma, we know $s^k\not\in \mathcal{M}$ for any $k\geq 1$. Clearly, this implies $\mathcal{M}$ does not contain $L(\Lambda)$ for any finite index subgroup $\Lambda$ in $SL(3,\mathbb{Z})$.
\end{proof}
We now give some examples of confined subalgebras which do not come from subgroups. 
\begin{example}[Confined Subalgebras not coming from Subgroups]
Let $\Gamma=\mathbb{F}_n$, the free group on $n$-generators where $n\ge 2$. Let $\{a_1,a_2,\ldots,a_n\}$ be one of the generating sets. Consider the automorphism $\sigma: \Gamma\to\Gamma$ defined by $\sigma(a_i)=a_{i+1}$ for each $i=1,2,\ldots,n$ where $a_{n+1}=a_1$. This induces an automorphism on $L(\Gamma)$, which we denote by $\sigma$ again. It is well-known that $L(\Gamma)^{\sigma}$ has finite index in $L(\Gamma)$. Therefore, using Lemma~\ref{lem: confiness passes to finite index subalgebras}, we see that $L(\Gamma)^{\sigma}$ is confined. However, it does not contain any group elements since no elements are fixed by $\sigma$.
\end{example}
\begin{remark}
It is easy to see that confineness is not a closed property. Consider, for example, a group $\Gamma$ admitting a decreasing sequence of non-trivial normal subgroups $\{N_k\}_k$ such that $\cap_kN_k=\{e\}$. It then follows that $N_k\to \{e\}$ in the Chabauty topology, hence, $L(N_k)\to\mathbb{C}$ in the EM-topology (by \cite[Proposition~4.1]{amrutam2025amenable}. However, each $L(N_k)$ is confined since $N_k$'s are all normal. But, $\mathbb{C}$ is not-confined.  
\end{remark}
We conclude this section by providing concrete, explicit examples of non-confined amenable subalgebras within free group factors. While the proof of our main general result (Theorem~\ref{thm:amenable-not-confined-direct}) will rely on the abstract topological dynamics of the Furstenberg boundary, the combinatorial geometry of free groups allows for a far more direct approach. By exploiting the ability to separate finite subsets using group elements explicitly, we can manually verify the unconfinedness of both the generator MASA and the radial MASA. 

In fact, it is easy to see that if $H$ is a finitely generated infinite index subgroup in free groups $F_n$ ($n\geq 2$), then it is not confined. This relies on the fact that finitely generated subgroups in free groups $F_n$ satisfy the \textit{Hall property}, which says that for any finitely generated subgroup $H$ in $F_n$, there exists a subgroup $K$ in $F_n$ such that $H$ is free from $K$ in the sense that the subgroup generated by $H$ and $K$ is canonically isomorphic to $H*K$; moreover, $[F_n: H*K]<\infty$ \cite{hall1949}.
\begin{lem}
Let $2\leq n\leq \infty$. Let $G=F_n$ be the non-abelian free group on $n$ generators $\{s_1,\ldots, s_n\}$. Then given any finite subsets $F\subseteq G\setminus \{e\}, F'\subseteq G$, there exists some $s\in G$  such that $sFs^{-1}\cap F'\langle s_1\rangle F'=\emptyset$.
\end{lem}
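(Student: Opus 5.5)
The plan is to take $s$ of the form $s=s_2^{N}u$. Here $u\in G$ is fixed first so that no element of $uFu^{-1}$ lies in $\langle s_2\rangle$, and $N$ is then chosen large. The whole argument uses reduced words in the free basis $\{s_1,\ldots,s_n\}$, where $|\cdot|$ denotes word length. Only $s_1$ and $s_2$ are needed, so the case $n=\infty$ is no different.

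\emph{Step 1 (choice of $u$).} Fix $e\neq g\in F$ and consider the set of $u$ with $ugu^{-1}\in\langle s_2\rangle$. If this set is non-empty, pick $u_0$ in it, so that $u_0gu_0^{-1}=s_2^{m}$ with $m\neq 0$. Then any other such $u$ satisfies $ugu^{-1}=s_2^{m'}$, so $s_2^{m}$ and $s_2^{m'}$ are conjugate, which in a free group forces $m'=m$. Consequently $uu_0^{-1}$ centralizes $s_2^m$. Since centralizers of non-trivial elements of a free group are cyclic, and here the centralizer is $\langle s_2\rangle$, we get $u\in\langle s_2\rangle u_0$. So the bad $u$'s for all $g\in F$ lie in finitely many right cosets of $\langle s_2\rangle$. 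The elements $s_1^{j}$, $j\in\mathbb{Z}$, lie in pairwise distinct right cosets, so some $u=s_1^{j}$ avoids all of them. Put $F_1:=uFu^{-1}$; this is a finite set of non-trivial elements, none of which is a power of $s_2$.

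\emph{Step 2 (normal form of $s_2^{N}g's_2^{-N}$).} For $g'\in F_1$, write the reduced word $g'=s_2^{a}w\,s_2^{-b}$, where $w$ neither begins with $s_2^{\pm1}$ nor ends with $s_2^{\pm1}$. Here $w\neq e$ because $g'\notin\langle s_2\rangle$. Then $s_2^{N}g's_2^{-N}=s_2^{N+a}w\,s_2^{-(N+b)}$ is reduced for $N>\max(|a|,|b|)$. Let $C$ bound $|a|,|b|,|w|$ over $F_1$ and let $L:=\max_{f\in F'}|f|$. For $N>C$, the word $s_2^Ng's_2^{-N}$ has length at least $2N-2C$ and contains no subword $s_1^{\pm r}$ with $r>C$.

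\emph{Step 3 (shape of $F'\langle s_1\rangle F'$).} Take $h=fs_1^kf'$ with $f,f'\in F'$. Reducing this product cancels at most $L$ letters of $s_1^k$ on each side. Hence if $|k|>2L+C$, the reduced word of $h$ contains the subword $s_1^{\pm(|k|-2L)}$ with $|k|-2L>C$, so $h\neq s_2^Ng's_2^{-N}$ by Step 2. If instead $|k|\le 2L+C$, then $|h|\le 4L+C$. Choosing $N$ with $2N-2C>4L+C$ rules out equality in this case too. So for such $N$ the element $s=s_2^Nu$ satisfies $sFs^{-1}=s_2^NF_1s_2^{-N}$, which is disjoint from $F'\langle s_1\rangle F'$.

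The main obstacle is Step 1: elements of $F$ that are conjugate into $\langle s_2\rangle$ must be moved away from $\langle s_2\rangle$ simultaneously. Without this, conjugating by $s_2^N$ fixes them and gives no growth. The coset-counting argument, which rests on centralizers in free groups being cyclic, handles this. Steps 2 and 3 are routine bookkeeping with reduced words.
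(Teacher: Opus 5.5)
Your proof is correct and follows the paper's route. The paper also takes $s=s_2^{\ell}t$ with $t=s_1^k$ chosen so that $tFt^{-1}\cap\langle s_2\rangle=\emptyset$, and then argues that the $s_2^{\pm\ell}$ blocks cannot be cancelled in $g^{-1}sfs^{-1}h^{-1}$ for $g,h\in F'$. Your centralizer/coset argument supplies the justification for the choice of $t=s_1^k$, which the paper only asserts, and your length-versus-$s_1$-run dichotomy is just a different bookkeeping of the same final reduced-word check.
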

\begin{proof}
We need to find $s$ such that $F'^{-1}sFs^{-1}F'^{-1}\cap \langle s_1\rangle=\emptyset$. 

First, note that there exists some $t\in G$ such that $tFt^{-1}\cap \langle s_2\rangle=\emptyset$. Indeed, we may take  $t=s_1^k$ for large enough $k\geq 1$.

Next, we may set  $s=s_2^{\ell}t$ for large enough power $\ell\geq 1$. Indeed, for any $g, h\in F', f\in F$, $g^{-1}sfs^{-1}h^{-1}=g^{-1}s_2^{\ell}tft^{-1}s_2^{-\ell}h^{-1}$. By the choice of $t$, we know that $tft^{-1}\not\in \langle s_2\rangle$. Hence for large enough $\ell$, the letters $s_2$ and $s_2^{-1}$ could not be cancelled out in $g^{-1}s_2^{\ell}tft^{-1}s_2^{-\ell}h^{-1}$. Hence, it does not belong to $\langle s_1\rangle$. Since $F', F$ are both finite sets, we can make a uniform choice of $\ell$ satisfying the above property.
\end{proof}

\begin{example}
Let $2\leq n\leq \infty$. Let $F_n=\langle s_1,\ldots, s_n\rangle$ be the non-abelian free groups on the $n$-generators $\{s_1,\ldots, s_n\}$. Let $A=L(\langle s_1\rangle)$ be the generator MASA in $L(F_n)$. Let $u$ be any unitary in $L(F_n)$. Then $uAu^*$ is  not confined in $L(F_n)$. 
\begin{proof}
By the previous lemma, there exists a sequence of elements $\{t_k\}\subset G$ such that for any given finite set $F\subseteq G\setminus \{e\}, F'\subset G$, there exists some $K\geq 1$ such that $t_k^{-1}Ft_k\cap F'\langle s_1\rangle F'=\emptyset$ for all $k\geq K$.

We need to check that $t_kuAu^*t_k^{-1}\to \mathbb{C}$ under the EM-topology. Thus, we need to show that for any $e\neq g\in F_n$,  $\left\|E_A(u^*t_k^{-1}gt_ku)\right\|_2\to 0$.

Note that $\{s_1^n: n\in\mathbb{Z}\}$ is an orthogonal normal basis for $A$ w.r.t. the $\left\|\cdot\right\|_2$-norm. Thus, $E_A(x)=\sum_{i\in\mathbb{Z}}\tau(xs_1^{-i})s_1^i$ for any $x\in L(F_n)$.
For any given $\epsilon>0$, there exists some finite set $F'\subset G$ and some $u_{F'}\in \mathbb{C}G$ with $\text{supp}(u_{F'})\subseteq F'$  such that $\left\|u-u_{F'}\right\|_2<\epsilon$ and $\left\|u_{F'}\right\|\leq 1$.
Note that for any $g\in G$, we have 
\begin{align*}
\left\|E_A(u^*t_k^{-1}gt_ku)\right\|_2
&\leq \left\|E_A(u_{F'}^*t_k^{-1}gt_ku_{F'})\right\|_2+\left\|E_A(u^*t_k^{-1}gt_ku-u_{F'}^*t_k^{-1}gt_ku_{F'})\right\|_2\\
&\leq \left\|E_A(u_{F'}^*t_k^{-1}gt_ku_{F'})\right\|_2+\left\|u^*t_k^{-1}gt_ku-u_{F'}^*t_k^{-1}gt_ku_{F'}\right\|_2\\
&\leq \left\|E_A(u_{F'}^*t_k^{-1}gt_ku_{F'})\right\|_2+2\left\|u-u_{F'}\right\|_2\\
&\leq \left\|E_A(u_{F'}^*t_k^{-1}gt_ku_{F'})\right\|_2+2\epsilon.
\end{align*}

Hence, we deduce that
\begin{align*}
    \left\|E_A(u^*t_k^{-1}gt_ku)\right\|_2^2&\leq  
    2\left\|E_A(u_{F'}^*t_k^{-1}gt_ku_{F'})\right\|_2^2+2(2\epsilon)^2\\
    &=2\sum_{i\in\mathbb{Z}}|\tau(u_{F'}^*t_k^{-1}gt_ku_{F'}s_1^{-i})|^2+8\epsilon^2\\
    &=2\sum_{i\in\mathbb{Z}}|\tau(t_k^{-1}gt_ku_{F'}s_1^{-i}u_{F'}^*)|^2+8\epsilon^2\\
    &=8\epsilon^2 (\text{as $k\to\infty$}).
\end{align*}
Note that the last equality holds by taking $k\geq K$ where $K$ satisfies that $t_K^{-1}\{g\}t_K\cap (F'\cup F'^{-1})\langle s_1\rangle (F'\cup F'^{-1})=\emptyset$.
\end{proof}
\end{example}
\begin{example}
Let $B=\{\sum_{i=1}^ns_i+s_i^{-1}\}''$ be the radial MASA in $L(F_n)$, where $n\geq 2$ and $\{s_1,\ldots, s_n\}$ is a set of free generators in $F_n$. Then $uBu^*$ is not confined for any unitary operator $u\in L(F_n)$.
\begin{proof}
Recall that $L^2(B)$
has an orthogonal basis $\{w_i/\|w_i\|_2\}_{i=0}^{\infty}$, where $w_i\in L(F_n)$ is the sum of all words of length $i$ in $F_n$. Note that $w_i$ is self-adjoint for all $i$.

Therefore, for any $y\in L(F_n)$, we have the following holds under $\|\cdot\|_2$-norm.
\begin{align*}
E_B(y)=\sum_{i=0}^{\infty}\tau(\frac{yw_i}{\|w_i\|_2})\frac{w_i}{\|w_i\|_2}.
\end{align*}
Since $F_n$ is i.c.c., we may find a sequence $s_m\in F_n$ such that $s_m^{-1}Fs_m\cap F=\emptyset$ for all finite subset $F\subset F_n\setminus \{e\}$ as $m\rightarrow \infty$. We claim that $s_muBu^*s_m^{-1}\rightarrow \mathbb{C}$ under EM-topology.

To show this, it suffices to check that for any $e\neq g\in F_n$, we have $E_B(u^*s_m^{-1}gs_mu)\rightarrow 0$ under $\|\cdot\|_2$-norm as $m\rightarrow \infty$.

First, we observe that for any words $t_k\in F_n$ such that $t_k\rightarrow \infty$, we have $\|E_B(t_k)\|_2\rightarrow 0$. 

Indeed, since $E_B(t_k)=\sum_{i=0}^{\infty}\tau(\frac{t_kw_i}{\|w_i\|_2})\frac{w_i}{\|w_i\|_2}$, we deduce that 
\begin{align*}
\|E_B(t_k)\|_2^2=\sum_{i=0}^{\infty}\frac{\left|\tau(t_kw_i)\right|^2}{\|w_i\|_2^2} 
=\frac{1}{\|w_{\ell(t_k)}\|^2}=\frac{1}{2n(2n-1)^{\ell(t_k)}}\rightarrow 0,
\end{align*}
where $\ell(t_k)$ denotes the word length of $t_k$ w.r.t. the standard  generating set $\{s_i^{\pm}: 1\leq i\leq n\}$.

Next, given any $\epsilon>0$, we may pick $u_{F}\subset \mathbb{C}G$ with $\text{supp}(u_F)\subseteq F$, $\left\|u-u_F\right\|_2<\epsilon$ and $\left\|u_F\right\|\leq 1$, where $F\subset G$ is a finite subset. 
Set $r=\sup_{g\in F}\ell(g)$, where $\ell(g)$ denotes the word length of $G$. Write $B_r(e)$ be the ball in $F_n$ with radius $\leq r$, note that $\sharp B_r(e)\leq (2n)^r$.
Note that 
\begin{align*}
    \left\|E_B(u^*s_m^{-1}gs_mu) \right\|\leq \left\| E_B(u_F^*s_m^{-1}gs_mu_F)\right\|_2+\left\| E_B(u^*s_m^{-1}gs_mu)- E_B(u_F^*s_m^{-1}gs_mu_F)\right\|\\
    \leq \left\| E_B(u_F^*s_m^{-1}gs_mu_F)\right\|_2+2\left\|u-u_F\right\|_2\leq  \left\| E_B(u_F^*s_m^{-1}gs_mu_F)\right\|_2+2\epsilon.
\end{align*}
Then notice that $E_B(u_F^*s_m^{-1}gs_mu_F)$ can be written as a linear combination of finitely many (in fact at most $[\sharp B(r)]^2$-many) terms in the form $E_B(t_1^{-1}s_m^{-1}gs_mt_2)$, where $t_1,t_2\in \supp(u_F)$. Then by choosing $m$ large enough and applying what we have proved above to $t_k:=t_1^{-1}s_m^{-1}gs_mt_2$ for all $t_1,t_2\in\supp(u_F)$, we may assume that $\left\| E_B(u_F^*s_m^{-1}gs_mu_F)\right\|_2\leq \epsilon$. This is enough to finish the proof.
\end{proof}
\end{example}
\begin{example}
Let $G=F_2\rtimes_{\sigma} \frac{\mathbb{Z}}{2\mathbb{Z}}$, where $\frac{\mathbb{Z}}{2\mathbb{Z}}=\langle s\rangle$ and $\sigma$ is the automorphism defined by $\sigma_s(a)=b$ and $\sigma_s(b)=a$ on the free group $F_2=\langle a,b\rangle$. Let $A=\{a+b+a^{-1}+b^{-1}\}'' \subset L(F_2)$ be the radial MASA. Define $B := A\rtimes_{\sigma}\frac{\mathbb{Z}}{2\mathbb{Z}} = A \bar{\otimes} L(\frac{\mathbb{Z}}{2\mathbb{Z}})$. Then $B$ is not confined in $L(G)$.
\begin{proof}
We will construct a sequence $\{h_n\} \subset F_2 \le G$ such that $h_n B h_n^{-1} \to \mathbb{C}$ in the EM-topology. By Proposition~\ref{prop:easyobservations}, it suffices to show that for every $g \in G \setminus \{e\}$, the conditional expectation satisfies
\[
\lim_{n \to \infty} \left\|E_B(h_n^{-1} g h_n)\right\|_2 = 0.
\]
Let $h_n = a^n b^n \in F_2$. Any element $g \in G$ can be written uniquely as either $g = x$ or $g = xs$ for some $x \in F_2$. We analyze the conditional expectation in these two cases. 

For the radial MASA $A$, we already know that for any sequence of words $\{w_n\} \subset F_2$, if the word length $|w_n| \to \infty$, then $\|E_A(w_n)\|_2 \to 0$.

\noindent \textbf{Case 1: $g = x \in F_2 \setminus \{e\}$.} \\
Conjugating $x$ by $h_n$, we obtain:
\[
h_n^{-1} x h_n = b^{-n} a^{-n} x a^n b^n.
\]
Since $x \neq e$, let us look at the reduced word length of this element. For $n > |x|$, the $a^{-n}$ and $a^n$ terms cannot be completely cancelled by $x$ unless  $x\in \langle a\rangle$. Even in the worst-case scenario where $x = a^k$ for some $k$, the conjugated element becomes $b^{-n} a^k b^n$, which has no further cancellation and a word length of $2n + |k|$. Thus, for any fixed $x \neq e$, the word length $|h_n^{-1} x h_n| \to \infty$ as $n \to \infty$. Consequently,
\[
\left\|E_B(h_n^{-1} x h_n)\right\|_2 = \left\|E_A(b^{-n} a^{-n} x a^n b^n)\right\|_2 \xrightarrow{n \to \infty} 0.
\]

\noindent \textbf{Case 2: $g = xs$ for some $x \in F_2$.} \\
Using the relation $s h_n = \sigma_s(h_n) s$, we conjugate $g$ by $h_n$:
\begin{align*}
h_n^{-1} (xs) h_n &= h_n^{-1} x (s h_n s) s \\
&= h_n^{-1} x \sigma_s(h_n) s.
\end{align*}
Since $h_n = a^n b^n$ and $\sigma_s$ swaps the generators, we have $\sigma_s(h_n) = b^n a^n$. Substituting this yields:
\[
h_n^{-1} (xs) h_n = (b^{-n} a^{-n}) x (b^n a^n) s.
\]
Applying the conditional expectation $E_B$, we get:
\[
\left\|E_B(h_n^{-1} x s h_n)\right\|_2 = \left\|E_A(b^{-n} a^{-n} x b^n a^n) s \right\|_2 = \left\|E_A(b^{-n} a^{-n} x b^n a^n)\right\|_2.
\]
We now examine the word length of $y_n = b^{-n} a^{-n} x b^n a^n$. For any fixed $x$, choose $n > |x|$. The subword $a^{-n}$ on the left and the subword $b^n$ on the right are distinct generators and cannot annihilate each other through the finite word $x$. The maximum possible cancellation is $|x|$, meaning the reduced word length satisfies $|y_n| \ge 4n - |x|$. Therefore, $|y_n| \to \infty$ as $n \to \infty$, which implies:
\[
\left\|E_A(b^{-n} a^{-n} x b^n a^n)\right\|_2 \xrightarrow{n \to \infty} 0.
\]
Since the conditional expectation vanishes asymptotically for all $g \neq e$, it follows that $h_n B h_n^{-1} \to \mathbb{C}$ in the Effros-Mar\'{e}chal topology. Thus, $B$ is not confined.
\end{proof}
\end{example}

\begin{remark}
Of course, many non-amenable subalgebras are not confined. $F_2$ contains $F_{\infty}$ as a subgroup. Hence, pick $F_n \le F_{\infty}$ and an element $s$ which is free from $F_n$. It then follows that $\lambda(s)^mF_n\lambda(s)^{-m}\to \mathbb{C}$ in the Effros-Mar\'{e}chal topology.  
\end{remark}

\section{A positive definite function approach to confinement}
\label{sec:pdfandURA}
As established in Section~\ref{sec:preliminaries}, the non-compactness of the Effros-Mar\'{e}chal topology presents a formidable analytical hurdle for studying the global dynamics of $\text{SubAlg}(L(\Gamma))$. To bypass this obstruction, we transport our dynamical system to a more tractable topological space via positive definite functions.

Recall that $\mathrm{PD}_1(\Gamma)$ is the space of normalized positive definite functions on $\Gamma$, equipped with the pointwise topology. Since $\mathrm{PD}_1(\Gamma)\subset [-1,1]^{\Gamma}$, it is a compact Hausdorff space. The group $\Gamma$ acts on $\mathrm{PD}_1(\Gamma)$ continuously by conjugation:
\[
(s\cdot\phi)(g) := \phi(s^{-1}gs), \quad s,g\in\Gamma,\ \phi\in\mathrm{PD}_1(\Gamma).
\]
\begin{definition}
Let $\mathcal{M} \le L(\Gamma)$ be a von Neumann subalgebra. Define
$\phi_{\mathcal{M}} : \Gamma \to \mathbb{C}$ by
\[
\phi_{\mathcal{M}}(g) := \tau_0\!\left(E_{\mathcal{M}}(\lambda_g)\,\lambda_g^*\right)
= \left\|E_{\mathcal{M}}(\lambda_g)\right\|_2^2,
\]
where the second equality holds since $E_{\mathcal{M}}$ is a self-adjoint
projection on $L^2(L(\Gamma))$.
\end{definition}

\begin{remark}
Note the following basic properties of $\phi_{\mathcal{M}}$.
\begin{enumerate}
    \item $\phi_{\mathcal{M}}(e) = 1$ and $0 \le \phi_{\mathcal{M}}(g) \le 1$
    for all $g \in \Gamma$.
    \item For the extreme cases: $\phi_{\mathbb{C}} = \delta_e$ and
    $\phi_{L(\Gamma)} \equiv 1$.
    \item For a subgroup $H \le \Gamma$ one has
    $\phi_{L(H)} = \mathbf{1}_H$, the indicator function of $H$.
    \item The assignment $\mathcal{M} \mapsto \phi_{\mathcal{M}}$ is
    $\Gamma$-equivariant: for every $s \in \Gamma$,
    \[
        (s \cdot \phi_{\mathcal{M}})(g)
        := \phi_{\mathcal{M}}(s^{-1}gs)
        = \phi_{s\mathcal{M}s^{-1}}(g).
    \]
    \item $\phi_{\mathcal{M}}$ is a positive definite function on $\Gamma$.
\end{enumerate}
\end{remark}
Crucially, this association perfectly bridges our topological spaces. The following proposition proves that the assignment $\mathcal{M} \mapsto \phi_{\mathcal{M}}$ acts as a continuous, $\Gamma$-equivariant map from the non-compact Effros-Mar\'{e}chal space into the pointwise compact space of normalized positive definite functions.
\begin{prop}
\label{prop:continuity-equivariance-of-phi}
The map $\Phi: \mathrm{SubAlg}(L(\Gamma)) \to \mathrm{PD}_1(\Gamma)$ defined by $\mathcal{M} \mapsto \phi_{\mathcal{M}}$ is a continuous, $\Gamma$-equivariant map, where $\mathrm{SubAlg}(L(\Gamma))$ is equipped with the Effros-Mar\'{e}chal topology and $\mathrm{PD}_1(\Gamma)$ is equipped with the pointwise topology.
\end{prop}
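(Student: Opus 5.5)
The argument has three parts: $\phi_{\mathcal{M}}$ lands in $\mathrm{PD}_1(\Gamma)$, $\Phi$ is $\Gamma$-equivariant, and $\Phi$ is continuous.

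\emph{Membership in $\mathrm{PD}_1(\Gamma)$.} Write $\xi_g:=\widehat{E_{\mathcal{M}}(\lambda_g)}\in L^2(L(\Gamma))$. Since $E_{\mathcal{M}}$ acts on $L^2$ as an orthogonal projection $e_{\mathcal{M}}$, we have
\[
\phi_{\mathcal{M}}(g)=\langle e_{\mathcal{M}}\delta_g,\delta_g\rangle .
\]
This expression alone does not give a positive definite function of $g$. Instead I would use the $\mathcal{M}$-bimodularity and trace preservation of $E_{\mathcal{M}}$. For finitely supported coefficients $c_i$ and elements $g_i$, consider
\[
\sum_{i,j} \bar c_i c_j\,\phi_{\mathcal{M}}(g_i^{-1}g_j)=\sum_{i,j}\bar c_i c_j\,\tau_0\bigl(E_{\mathcal{M}}(\lambda_{g_i}^*\lambda_{g_j})\lambda_{g_j}^*\lambda_{g_i}\bigr).
\]
The plan is to recognize this as $\tau_0$ of an element of the form $\sum_{i,j}\lambda_{g_i}E_{\mathcal{M}}(\lambda_{g_i}^*\lambda_{g_j})\lambda_{g_j}^*$, weighted by $\bar c_i c_j$. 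The key input is that the matrix $[E_{\mathcal{M}}(x_i^*x_j)]_{i,j}$ is positive in $M_n(\mathcal{M})$, by complete positivity of $E_{\mathcal{M}}$. Then $\sum_{i,j}\bar c_i c_j\, x_i E(x_i^*x_j)x_j^*$ is positive, and this is the step I would check carefully. If it fails, I would instead use item (5) of the preceding Remark, which the paper already records. Normalization is immediate: $\phi_{\mathcal{M}}(e)=\tau_0(1)=1$.

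\emph{Equivariance.} This follows from Lemma \ref{canonicalconditionalexpectation}. We have
\[
\phi_{s\mathcal{M}s^{-1}}(g)=\tau_0\bigl(\lambda_sE_{\mathcal{M}}(\lambda_{s^{-1}gs})\lambda_s^*\lambda_g^*\bigr).
\]
By traciality this equals $\tau_0\bigl(E_{\mathcal{M}}(\lambda_{s^{-1}gs})\lambda_{s^{-1}gs}^*\bigr)=\phi_{\mathcal{M}}(s^{-1}gs)$. This is exactly item (4) of the Remark.

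\emph{Continuity.} This is the main point. The EM-topology on $\mathrm{SubAlg}(L(\Gamma))$ is metrizable, since it is Polish on a separable predual (cf.\ \cite{haagerup1998effros}). So it suffices to check sequences $\mathcal{M}_n\to\mathcal{M}$. If one wants to avoid metrizability, the same argument works for nets using the net version of \cite[Corollary~2.12]{haagerup1998effros}.

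By Proposition \ref{conditionalexpectationconvergence}, $E_{\mathcal{M}_n}(\lambda_g)\to E_{\mathcal{M}}(\lambda_g)$ in the strong$^*$ topology, hence in $\|\cdot\|_2$, since $\|x\|_2=\|x\delta_e\|$. The map $x\mapsto\|x\|_2^2$ is continuous in $\|\cdot\|_2$, so $\phi_{\mathcal{M}_n}(g)=\|E_{\mathcal{M}_n}(\lambda_g)\|_2^2\to\phi_{\mathcal{M}}(g)$ for each $g$. This is pointwise convergence, i.e.\ convergence in $\mathrm{PD}_1(\Gamma)$.

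The only subtle point in the whole argument is whether Proposition \ref{conditionalexpectationconvergence} applies to nets as well as sequences; everything else reduces to earlier results.
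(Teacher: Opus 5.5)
Your proposal is correct and matches the paper's proof on the two points the paper actually argues: equivariance via the conjugation formula for $E_{\mathcal{M}^s}$ plus traciality, and continuity via strong$^*$ convergence of $E_{\mathcal{M}_n}(\lambda_g)$ (Proposition~\ref{conditionalexpectationconvergence}), which gives $\|\cdot\|_2$-convergence and hence pointwise convergence; your metrizability remark justifies the reduction to sequences, which the paper makes without comment. The positivity step you flagged does hold, since compressing the positive matrix $[E_{\mathcal{M}}(x_i^*x_j)]_{i,j}$ by the row $(\bar c_1x_1,\dots,\bar c_nx_n)$ gives $\sum_{i,j}\bar c_ic_j\,x_iE_{\mathcal{M}}(x_i^*x_j)x_j^*\ge 0$, so you in fact prove item (5) of the Remark, which the paper only asserts.
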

\begin{proof}
We first show continuity. Let $\{\mathcal{M}_n\}_{n=1}^{\infty}$ be a sequence of von Neumann subalgebras converging to $\mathcal{M}$ in the Effros-Mar\'{e}chal topology. By Proposition~\ref{conditionalexpectationconvergence}, the corresponding canonical conditional expectations converge in the strong-$*$ topology:
\[
    E_{\mathcal{M}_n}(x) \xrightarrow[]{\text{so*}}  E_{\mathcal{M}}(x) \quad \text{for all } x \in L(\Gamma).
\]
In a finite von Neumann algebra, strong-$*$ convergence implies convergence in the $\|\cdot\|_2$-norm. Therefore, for any fixed $g \in \Gamma$, we have
\[
    \lim_{n \to \infty} \left\| E_{\mathcal{M}_n}(\lambda_g) \right\|_2 = \left\| E_{\mathcal{M}}(\lambda_g) \right\|_2.
\]
Squaring both sides yields $\lim_{n \to \infty} \phi_{\mathcal{M}_n}(g) = \phi_{\mathcal{M}}(g)$. Since this holds for every $g \in \Gamma$, the map $\Phi$ is continuous.

Next, we verify $\Gamma$-equivariance. The group $\Gamma$ acts on $\mathrm{SubAlg}(L(\Gamma))$ by conjugation, $s \cdot \mathcal{M} = \lambda_s \mathcal{M} \lambda_s^*$, and on $\mathrm{PD}_1(\Gamma)$ by $(s \cdot \phi)(g) = \phi(s^{-1}gs)$. Using Lemma~\ref{canonicalconditionalexpectation} and the tracial property of $\tau_0$, we see that
\begin{align*}
    \Phi(s \cdot \mathcal{M})(g) &= \phi_{\lambda_s \mathcal{M} \lambda_s^*}(g) \\
    &= \tau_0\!\left( E_{\lambda_s \mathcal{M} \lambda_s^*}(\lambda_g) \lambda_g^* \right) \\
    &= \tau_0\!\left( \lambda_s E_{\mathcal{M}}(\lambda_s^* \lambda_g \lambda_s) \lambda_s^* \lambda_g^* \right) \\
    &= \tau_0\!\left( E_{\mathcal{M}}(\lambda_{s^{-1}gs}) \lambda_s^* \lambda_g^* \lambda_s \right) \\
    &= \tau_0\!\left( E_{\mathcal{M}}(\lambda_{s^{-1}gs}) \lambda_{s^{-1}gs}^* \right) \\
    &= \phi_{\mathcal{M}}(s^{-1}gs) \\
    &= (s \cdot \Phi(\mathcal{M}))(g).
\end{align*}
Thus, $\Phi$ is $\Gamma$-equivariant.
\end{proof}
With this continuous map established, we can entirely reformulate the geometric concept of confinement through the lens of positive definite functions.

The key dynamical observation is that non-confinement in the Effros-Mar\'{e}chal topology translates precisely to the pointwise decay of our associated positive definite function to $\delta_e$.
\begin{prop}
\label{prop:phiM-orbit}
Let $\mathcal{M} \le L(\Gamma)$ be a von Neumann subalgebra and let
$\{s_i\} \subset \Gamma$ be a sequence. The following are equivalent.
\begin{enumerate}
    \item $s_i \mathcal{M} s_i^{-1} \to \mathbb{C}$ in
    $\mathrm{EM}$-topology.
    \item $s_i \cdot \phi_{\mathcal{M}} \to \delta_e$ pointwise on
    $\Gamma$.
\end{enumerate}
\begin{proof}
$(1) \Rightarrow (2)$: Fix $g \ne e$. By
Proposition~\ref{prop:easyobservations}, condition (1) gives
$\left\|E_{\mathcal{M}}(\lambda_{s_i^{-1}gs_i})\right\|_2 \to 0$.
Hence, using the $\Gamma$-equivariance, we get that
\[
(s_i \cdot \phi_{\mathcal{M}})(g)
= \phi_{\mathcal{M}}(s_i^{-1}gs_i)
= \left\|E_{\mathcal{M}}(\lambda_{s_i^{-1}gs_i})\right\|_2^2
\xrightarrow{i \to \infty} 0
= \delta_e(g).
\]
At $g = e$ we always have $(s_i \cdot \phi_{\mathcal{M}})(e) = 1 =
\delta_e(e)$, so pointwise convergence holds on all of $\Gamma$.

$(2) \Rightarrow (1)$: If $s_i \cdot \phi_{\mathcal{M}} \to \delta_e$
pointwise, then for every $g \ne e$,
\[
\left\|E_{\mathcal{M}}(\lambda_{s_i^{-1}gs_i})\right\|_2^2
= \phi_{\mathcal{M}}(s_i^{-1}gs_i)
= (s_i \cdot \phi_{\mathcal{M}})(g) \to 0,
\]
which by Proposition~\ref{prop:easyobservations}(4) is equivalent to
$s_i \mathcal{M} s_i^{-1} \to \mathbb{C}$ in EM-topology.
\end{proof}
\end{prop}

\begin{cor}
\label{cor:confinement-pdfn}
Let $\mathcal{M} \le L(\Gamma)$ be a von Neumann subalgebra. Then:
\[
\mathcal{M}\ \text{is confined}
\quad \Longleftrightarrow \quad
\delta_e \notin
\overline{\left\{s \cdot \phi_{\mathcal{M}} : s \in \Gamma\right\}}^{\,\mathrm{ptwise}}.
\]
That is, $\mathcal{M}$ is confined if and only if $\delta_e$ is not in
the pointwise orbit closure of $\phi_{\mathcal{M}}$ in
$\mathrm{PD}_1(\Gamma)$.
\end{cor}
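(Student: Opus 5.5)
The statement to prove is Corollary~\ref{cor:confinement-pdfn}. The plan is to derive it directly from Proposition~\ref{prop:phiM-orbit}, using that both topologies involved can be tested along sequences. By definition, $\mathcal{M}$ fails to be confined exactly when $\mathbb{C}$ lies in the EM-closure of $\{\mathcal{M}^g : g\in\Gamma\}$. The goal is to show this happens if and only if $\delta_e$ lies in the pointwise closure of $\{s\cdot\phi_{\mathcal{M}}: s\in\Gamma\}$. Contrapositively, this gives the stated equivalence.

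For the pointwise side, $\mathrm{PD}_1(\Gamma)\subset[-1,1]^\Gamma$ is metrizable because $\Gamma$ is countable. So $\delta_e$ lies in the orbit closure if and only if there is a sequence $s_i\in\Gamma$ with $s_i\cdot\phi_{\mathcal{M}}\to\delta_e$ pointwise.

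For the EM side, I will use the fact that the Effros--Mar\'echal topology on $\mathrm{SubAlg}(L(\Gamma))$ is Polish, hence metrizable, since $L(\Gamma)$ has separable predual \cite{haagerup1998effros}. Then $\mathbb{C}$ lies in the EM-closure of the orbit if and only if $s_i\mathcal{M}s_i^{-1}\to\mathbb{C}$ for some sequence $s_i$. With both sequential reformulations in place, Proposition~\ref{prop:phiM-orbit} identifies the two conditions for the same sequence $\{s_i\}$, which finishes the argument.

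The main point needing care is the reduction from closures to sequences on the EM side. If I prefer not to invoke metrizability of the EM-topology, I can avoid it in two ways.

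\textbf{Direction ($\Leftarrow$).} Suppose $\delta_e$ is in the pointwise orbit closure, so some sequence $s_i$ satisfies $s_i\cdot\phi_{\mathcal{M}}\to\delta_e$. Proposition~\ref{prop:phiM-orbit} then gives $s_i\mathcal{M}s_i^{-1}\to\mathbb{C}$ directly, so $\mathcal{M}$ is not confined.

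\textbf{Direction ($\Rightarrow$).} Suppose $\mathcal{M}$ is not confined. Take a net $\mathcal{M}^{g_\alpha}\to\mathbb{C}$. By Proposition~\ref{prop:continuity-equivariance-of-phi}, the map $\Phi$ is continuous and equivariant, and it suffices to know it is continuous along nets. Then
\[
g_\alpha\cdot\phi_{\mathcal{M}}=\phi_{\mathcal{M}^{g_\alpha}}\to\phi_{\mathbb{C}}=\delta_e .
\]
Thus $\delta_e$ lies in the pointwise orbit closure.

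The continuity in Proposition~\ref{prop:continuity-equivariance-of-phi} was stated for sequences. Extending it to nets needs the net version of Proposition~\ref{conditionalexpectationconvergence}, which holds in \cite{haagerup1998effros}. Alternatively, metrizability removes the issue altogether.
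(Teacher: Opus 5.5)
Your argument is correct and is essentially the paper's: the corollary is stated there immediately after Proposition~\ref{prop:phiM-orbit}, with no separate proof, as a direct consequence of it. Your explicit reduction from orbit closures to sequences fills in a step the paper leaves implicit, since Proposition~\ref{prop:phiM-orbit} and the paper's definition of EM-convergence are phrased only for sequences. That reduction uses metrizability of the pointwise topology on $\mathrm{PD}_1(\Gamma)$ for countable $\Gamma$, together with the Polish Effros--Mar\'echal topology (or alternatively continuity of $\Phi$ for the forward direction).
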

In a $C^*$-simple group, amenability intrinsically forces non-confinement. This is the main result of this section and completes the proof of Theorem~\ref{thm:mainconfined}. 

We first recall the notion of boundary actions. Let $\Gamma$ be a discrete countable group and $X$ be a $\Gamma$-space, i.e., $X$ is a compact Hausdorff space and $\Gamma\curvearrowright X$ by homeomorphisms. The action $\Gamma \curvearrowright X$ is called a boundary action if
\[
\{\delta_x: x\in X\}\subset\overline{\Gamma\nu}^{\text{weak}^*}
\]
for every $\nu \in \text{Prob}(X)$. The Furstenberg boundary of $\Gamma$, denoted $\partial_F\Gamma$, is a $\Gamma$-boundary which is universal in the sense that every other $\Gamma$-boundary $Y$ is a $\Gamma$-equivariant continuous image of $\partial_F\Gamma$.  Let $X$ be a minimal $\Gamma$-space (such as the Furstenberg boundary $\partial_F\Gamma$). We can view the continuous functions $C(X)$ as multiplication operators on $\mathbb{B}(\ell^2(\Gamma))$. 

Fix a base point $x_0\in X$. For any $f\in C(X)$, the map $M(f):\ell^2(\Gamma)\to\ell^2(\Gamma)$ defined by 
\[
M(f)(\delta_t)=f(t.x_0)\delta_t
\]
is linear and bounded. Since the action $\Gamma\curvearrowright X$ is minimal, we see that $\|M(f)\|=\|f\|_{\infty}$ for every $f\in C(X)$. Therefore, we can identify $C(X)$ with its image $M(C(X))$ inside $\mathbb{B}(\ell^2(\Gamma))$. Note that while this embedding is faithful, it is not canonical as it depends on the choice of the base point $x_0$.
\begin{theorem}
\label{thm:amenable-not-confined-direct}
Let $\Gamma$ be a $C^*$-simple group and $\mathcal{M} \le L(\Gamma)$ an amenable von Neumann subalgebra. Then there exists a sequence $\{s_i\} \subset \Gamma$ such that $s_i \cdot \phi_{\mathcal{M}} \to \delta_e$ pointwise. In particular, $\mathcal{M}$ is not confined.
\begin{proof}
Because $\mathcal{M}$ is amenable, using \cite[Proposition~2.1]{amrutam2025amenable},  we find a hypertrace $\varphi \in \mathrm{Hype}_{\tau_0}(\mathcal{M})$. Using the embedding $f \mapsto M(f)$ of $C(\partial_F\Gamma)$ into $\mathbb{B}(\ell^2(\Gamma))$ defined above, we restrict the state $\varphi$ to $C(\partial_F\Gamma)$; by the Riesz representation theorem, this yields a probability measure $\mu$ on $\partial_F\Gamma$.

Since $\Gamma\curvearrowright \partial_F \Gamma$ is a boundary action, we can find a net $\{s_i\} \subset \Gamma$ and a point $x \in \partial_F \Gamma$ such that $s_i \mu \xrightarrow{\text{weak}^*} \delta_x$.
Define the sequence of conjugated states $\varphi_i = s_i \cdot \varphi = \varphi(\lambda_{s_i}^* \cdot \lambda_{s_i})$. Observe that $\varphi_i\in\text{Hype}_{\tau_0}(\lambda(s_i)\mathcal{M}\lambda(s_i)^*):=\mathcal{M}_i$. By passing to a subnet if necessary, $\varphi_i$ converges in the weak*-topology to a state $\psi$ on $B(\ell^2(\Gamma))$. Observe that $\psi|_{C(\partial_F \Gamma)} = \delta_x$.

We now evaluate the pointwise action of $s_i$ on the positive definite function $\phi_{\mathcal{M}}$. By definition and the hypertrace property of $\varphi$, we see that
\begin{align*}
(s_i \cdot \phi_{\mathcal{M}})(g) &= \phi_{\mathcal{M}}(s_i^{-1} g s_i) \\
&= \tau_0\!\left( E_{\mathcal{M}}(\lambda_{s_i^{-1} g s_i}) \lambda_{s_i^{-1} g^{-1} s_i} \right) \\
&= \varphi\!\left( E_{\mathcal{M}}(\lambda_{s_i}^* \lambda_g \lambda_{s_i}) \lambda_{s_i}^* \lambda_g^* \lambda_{s_i} \right).
\end{align*}
Using the fact that $\varphi_i(\cdot) = \varphi(\lambda_{s_i}^* \cdot \lambda_{s_i})$, we can rewrite this as:
\begin{align*}
&(s_i \cdot \phi_{\mathcal{M}})(g)=\varphi\!\left( E_{\mathcal{M}}(\lambda_{s_i}^* \lambda_g \lambda_{s_i}) \lambda_{s_i}^* \lambda_g^* \lambda_{s_i} \right)\\&=\varphi\!\left(\lambda(s_i)^*\left(\lambda(s_i) E_{\mathcal{M}}(\lambda_{s_i}^* \lambda_g) \lambda_{s_i}) \lambda_{s_i}^* \lambda_g^*\right) \lambda(s_i) \right)\\& = \varphi_i\!\left( \lambda_{s_i} E_{\mathcal{M}}(\lambda_{s_i}^* \lambda_g \lambda_{s_i}) \lambda_{s_i}^* \lambda_g^* \right).
\end{align*}
Notice that the term $\lambda_{s_i} E_{\mathcal{M}}(\lambda_{s_i}^* \lambda_g \lambda_{s_i}) \lambda_{s_i}^*$ is exactly the conditional expectation onto the conjugated algebra $\mathcal{M}_i := s_i \mathcal{M} s_i^{-1}$. Let us denote $m_i := E_{\mathcal{M}_i}(\lambda_g)$. Thus, it follows that
\[
(s_i \cdot \phi_{\mathcal{M}})(g) = \varphi_i(m_i \lambda_g^*).
\]
We must show this sequence goes to $0$ for any $g \neq e$. Since $\Gamma$ is $C^*$-simple, the action $\Gamma\curvearrowright\partial_F\Gamma$ is free (cf.~\cite{breuillard2017c}), so $g^{-1}x \neq x$. We can then find a continuous function $f \in C(\partial_F \Gamma)$ such that $0 \le f \le 1$, $f(x) = 1$, and $f(gx) = 0$. We split the evaluation using $f$ as a cutoff function and write
\[
\varphi_i(m_i \lambda_g^*) = \varphi_i(m_i \lambda_g^* f) + \varphi_i(m_i \lambda_g^* (1-f)).
\]
We deal with each term individually. Commuting the $\Gamma$-action on $f$, we have $\varphi_i(m_i \lambda_g^* f) = \varphi_i(m_i (g^{-1}\cdot f) \lambda_g^*)$. 
By the Cauchy-Schwarz inequality for the state $\varphi_i$:
\[
\left| \varphi_i\!\left( m_i \sqrt{g^{-1}\cdot f} \sqrt{g^{-1}\cdot f} \lambda_g^* \right) \right|^2 \le \varphi_i\!\left( m_i (g^{-1}\cdot f) m_i^* \right) \varphi_i\!\left( \lambda_g (g^{-1}\cdot f) \lambda_g^* \right).
\]
Because $\varphi_i \in\text{Hype}_{\tau_0}( \mathcal{M}_i)$, and $m_i \in \mathcal{M}_i$, it centralizes $m_i$. Therefore, $\varphi_i( m_i (g^{-1}\cdot f) m_i^* ) = \varphi_i( (g^{-1}\cdot f) m_i^* m_i )$. Since $E_{\mathcal{M}_i}$ is a conditional expectation, we have by the Kadison-Schwarz inequality,
\[
m_i^* m_i = E_{\mathcal{M}_i}(\lambda_g)^* E_{\mathcal{M}_i}(\lambda_g) \le E_{\mathcal{M}_i}(\lambda_g^* \lambda_g) = E_{\mathcal{M}_i}(1) = 1.
\]
Thus, $1 - m_i^* m_i \ge 0$. We consider the difference:
\begin{align*}
\varphi_i(g^{-1}\cdot f) - \varphi_i\!\left((g^{-1}\cdot f) m_i^* m_i\right) 
&= \varphi_i\!\left( (g^{-1}\cdot f) (1 - m_i^* m_i) \right) \\
&= \varphi_i\!\left( (g^{-1}\cdot f) (1 - m_i^* m_i)^{1/2} (1 - m_i^* m_i)^{1/2} \right).
\end{align*}
Using the hypertrace property of $\varphi_i$ to cyclically permute $(1 - m_i^* m_i)^{1/2} \in \mathcal{M}_i$, we obtain
\begin{align*}
&\varphi_i\!\left( (1 - m_i^* m_i)^{1/2} (g^{-1}\cdot f) (1 - m_i^* m_i)^{1/2} \right) 
\\&= \varphi_i\!\left( (1 - m_i^* m_i)^{1/2} (g^{-1}\cdot f)^{1/2} (g^{-1}\cdot f)^{1/2} (1 - m_i^* m_i)^{1/2} \right) \\
&= \varphi_i\!\left( X X^* \right) \ge 0,
\end{align*}
where $X = (1 - m_i^* m_i)^{1/2} (g^{-1}\cdot f)^{1/2}$. 
This implies that $\varphi_i\!\left((g^{-1}\cdot f) m_i^* m_i\right) \le \varphi_i(g^{-1}\cdot f)$.

As $i \to \infty$, $\varphi_i(g^{-1}\cdot f) \to \psi(g^{-1}\cdot f) = f(gx) = 0$. The second factor $\varphi_i(\lambda_g (g^{-1}\cdot f) \lambda_g^*) = \varphi_i(f)$ is bounded uniformly by $1$. Hence, it follows that
\[\lim_{i\to\infty}|\varphi_i( m_i \lambda_g^*f )|=0.\]
Applying Cauchy-Schwarz to the second term, we see that
\[
\left| \varphi_i\!\left( m_i \lambda_g^* \sqrt{1-f} \sqrt{1-f} \right) \right|^2 \le \varphi_i\!\left( m_i \lambda_g^* (1-f) \lambda_g m_i^* \right) \varphi_i(1-f).
\]
As $i \to \infty$, $\varphi_i(1-f) \to \psi(1-f) = 1 - f(x) = 0$. The first factor is bounded by the operator norm $\|m_i \lambda_g^* (1-f) \lambda_g m_i^*\|_\infty \le 1$. Hence, the second term goes to $0$ as well.

Since both terms vanish, we conclude that for every $g \neq e$, 
\[
\lim_{i \to \infty} (s_i \cdot \phi_{\mathcal{M}})(g) = 0.
\]
Since $(s_i \cdot \phi_{\mathcal{M}})(e) = 1$ always, we have $s_i \cdot \phi_{\mathcal{M}} \to \delta_e$ pointwise. By Proposition~\ref{prop:phiM-orbit}, $\mathcal{M}$ is not confined.
\end{proof}
\end{theorem}

\subsection{Uniformly Recurrent States}
\label{subsec:URA}
To bypass the non-compactness of the Effros-Mar\'{e}chal topology on the space of von Neumann subalgebras $\mathrm{SubAlg}(L(\Gamma))$, we transport the dynamics to the state space via the map $\mathcal{M} \mapsto \phi_{\mathcal{M}}$.

We define the space of \emph{subalgebra states} $\mathcal{X}_\Gamma \subset \mathrm{PD}_1(\Gamma)$ as the pointwise closure of the positive definite functions arising from von Neumann subalgebras:
\[
    \mathcal{X}_\Gamma := \overline{\{\phi_{\mathcal{M}} : \mathcal{M} \le L(\Gamma)\}}^{\,\mathrm{ptwise}}.
\]
Because $\mathcal{X}_\Gamma$ is a closed subset of a compact space, it is itself a compact $\Gamma$-space. This serves as our operator-algebraic replacement for the Chabauty space of subgroups.

\begin{definition}[Uniformly Recurrent State]
\label{def:URA}
A \emph{Uniformly Recurrent State} is a non-empty, closed, minimal $\Gamma$-invariant subset of $\mathcal{X}_\Gamma$.
\end{definition}

\begin{remark}
Since $\mathcal{X}_\Gamma$ is compact, Zorn's Lemma guarantees that every non-empty closed $\Gamma$-invariant subset of $\mathcal{X}_\Gamma$ contains at least one uniformly recurrent state. In particular, for any von Neumann subalgebra $\mathcal{M}\leq L(\Gamma)$, the pointwise orbit closure $\overline{\{s\cdot\phi_{\mathcal{M}}: s\in\Gamma\}}^{\,\mathrm{ptwise}}$ is guaranteed to contain at least one Uniformly Recurrent State.
\end{remark}
Furthermore, every Uniformly Recurrent Subgroup intrinsically generates a corresponding Uniformly Recurrent State.
\begin{lemma}
\label{lem:URS-is-URA}
Let $\mathcal{X}\in\mathrm{URS}(\Gamma)$ be a uniformly recurrent subgroup. Then
\[
\mathcal{H}_{\mathcal{X}} := \{\phi_{L(H)} : H\in\mathcal{X}\} = \{\mathbf{1}_H : H\in\mathcal{X}\}
\]
is a uniformly recurrent state. Moreover, $\mathcal{H}_{\mathcal{X}}$ is amenable if and only if $\mathcal{X}$ is amenable, and $\mathcal{H}_{\mathcal{X}}$ is trivial if and only if $\mathcal{X}=\{\{e\}\}$.
\end{lemma}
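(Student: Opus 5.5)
The plan is to transport everything through the map $\iota:\mathrm{Sub}(\Gamma)\to \mathrm{PD}_1(\Gamma)$, $H\mapsto \mathbf{1}_H$. This is the restriction of $\Phi$ to subgroup algebras, since $\phi_{L(H)}=\mathbf{1}_H$ by the Remark following the definition of $\phi_{\mathcal{M}}$. It is also the Chabauty embedding. Indeed, the Chabauty topology on $\mathrm{Sub}(\Gamma)\subset\{0,1\}^\Gamma$ is exactly the topology of pointwise convergence of indicator functions. Hence $\iota$ is injective, continuous, and $\Gamma$-equivariant: $s\cdot \mathbf{1}_H(g)=\mathbf{1}_H(s^{-1}gs)=\mathbf{1}_{sHs^{-1}}(g)$. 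Since $\mathrm{Sub}(\Gamma)$ is compact and $\mathrm{PD}_1(\Gamma)$ is Hausdorff, $\iota$ is a homeomorphism onto its image and a conjugacy of $\Gamma$-spaces.

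\textbf{Uniform recurrence.} First, $\mathcal{H}_{\mathcal{X}}=\iota(\mathcal{X})$ is contained in $\mathcal{X}_\Gamma$, because each $\mathbf{1}_H=\phi_{L(H)}$. Second, it is compact, hence closed in $\mathrm{PD}_1(\Gamma)$ and therefore in $\mathcal{X}_\Gamma$; it is nonempty and $\Gamma$-invariant by equivariance. For minimality, let $Y\subseteq \iota(\mathcal{X})$ be nonempty, closed and invariant. Then $\iota^{-1}(Y)$ is a nonempty, closed, invariant subset of $\mathcal{X}$. Minimality of $\mathcal{X}$ gives $\iota^{-1}(Y)=\mathcal{X}$, so $Y=\iota(\mathcal{X})$.

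\textbf{Amenability and triviality.} These two clauses require interpreting the terms for states. The natural definitions are that a Uniformly Recurrent State is amenable if it consists of, or arises from, $\phi_{\mathcal{M}}$ with $\mathcal{M}$ amenable, and trivial if it equals $\{\delta_e\}$. I will use these definitions.
\begin{itemize}
\item If $\mathcal{X}$ is amenable, each $H\in\mathcal{X}$ is amenable. Then $L(H)$ is an amenable von Neumann algebra, and $\mathbf{1}_H=\phi_{L(H)}$ with $L(H)$ amenable.
\item Conversely, suppose $\mathbf{1}_H=\phi_{\mathcal{M}}$ for some amenable $\mathcal{M}$. Then $\|E_{\mathcal{M}}(\lambda_h)\|_2=1=\|\lambda_h\|_2$ for $h\in H$, which forces $\lambda_h=E_{\mathcal{M}}(\lambda_h)\in\mathcal{M}$. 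Hence $L(H)\subseteq\mathcal{M}$. Amenability passes to subalgebras via the conditional expectation, so $L(H)$ is amenable, and therefore $H$ is amenable.
\item For triviality, $\mathbf{1}_H=\delta_e$ if and only if $H=\{e\}$. So $\mathcal{H}_{\mathcal{X}}=\{\delta_e\}$ if and only if $\mathcal{X}=\{\{e\}\}$.
\end{itemize}

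\textbf{Main obstacle.} The delicate point is the converse direction of amenability, which depends on the exact definition of an amenable state used later in the paper. If amenability means membership in the closure of $\{\phi_{\mathcal{M}}:\mathcal{M}\text{ amenable}\}$, I would argue through hypertraces instead, as in Theorem~\ref{thm:amenable-not-confined-direct}. Let $\phi_{\mathcal{M}_n}\to\mathbf{1}_H$ pointwise with $\mathcal{M}_n$ amenable, and let $\varphi_n\in\mathrm{Hype}_{\tau_0}(\mathcal{M}_n)$. Then
\[\|E_{\mathcal{M}_n}(\lambda_h)-\lambda_h\|_2^2=1-\phi_{\mathcal{M}_n}(h)\to0 \quad\text{for } h\in H.\]
A weak$^*$ limit point of the $\varphi_n$ should then be a hypertrace for $L(H)$. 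The key estimate is $|\varphi_n(x\lambda_h-\lambda_h x)|\to 0$, controlled by Cauchy--Schwarz using that $\varphi_n$ restricts to $\tau_0$ on $L(\Gamma)$. This would give amenability of $L(H)$, hence of $H$.
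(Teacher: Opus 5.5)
Your proposal is correct and follows the paper's route: transport everything through the equivariant embedding $H\mapsto \mathbf{1}_H=\phi_{L(H)}$, which is a homeomorphism from $\mathrm{Sub}(\Gamma)$ onto its image. You then transfer nonemptiness, closedness, invariance and minimality, and read off triviality from $\mathbf{1}_H=\delta_e \iff H=\{e\}$. Your handling of the amenability clause is more careful than the paper's one-line argument, which tacitly takes amenability of $\mathcal{H}_{\mathcal{X}}$ to mean amenability of each $L(H)$; you instead show that $\mathbf{1}_H=\phi_{\mathcal{M}}$ forces $L(H)\subseteq\mathcal{M}$, and you give a hypertrace sketch for the closure interpretation.
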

\begin{proof}
We first observe that $\phi_{L(H)}=\mathbf{1}_H$ for any subgroup $H\leq\Gamma$, since
\[
\phi_{L(H)}(g) = \|E_{L(H)}(\lambda_g)\|_2^2 = \begin{cases} 1 & g\in H \\ 0 & g\notin H \end{cases} = \mathbf{1}_H(g).
\]
The map $H\mapsto\mathbf{1}_H$ is therefore a $\Gamma$-equivariant bijection from $\mathrm{Sub}(\Gamma)$ to the corresponding subset of $\mathcal{SA}(L(\Gamma))$, where equivariance follows from
\[
s\cdot\mathbf{1}_H = \mathbf{1}_{sHs^{-1}} = \phi_{L(sHs^{-1})} = s\cdot\phi_{L(H)}.
\]
Moreover, this map is a homeomorphism from $\mathrm{Sub}(\Gamma)$ with the Chabauty topology onto its image in $\mathcal{SA}(L(\Gamma))$ with the pointwise topology. Indeed, a net $H_i\to H$ in the Chabauty topology if and only if $\mathbf{1}_{H_i}\to\mathbf{1}_H$ pointwise, by definition of the Chabauty topology. Consequently, $L(H_i)\to L(H)$ using \cite[Proposition~4.1]{amrutam2025amenable}.

Since $\mathcal{X}$ is a URS, it is non-empty, closed, minimal, and $\Gamma$-invariant in $\mathrm{Sub}(\Gamma)$. The homeomorphism $H\mapsto\mathbf{1}_H$ transfers all four properties to $\mathcal{H}_{\mathcal{X}}$, making it a non-empty, closed, minimal, $\Gamma$-invariant subset of $\mathcal{SA}(L(\Gamma))$, i.e.\ a Uniformly Recurrent State.

For the second statement, $\mathcal{H}_{\mathcal{X}}$ is amenable if and only if $L(H)$ is amenable for every $H\in\mathcal{X}$, which holds if and only if $H$ is an amenable group for every $H\in\mathcal{X}$, i.e.\ $\mathcal{X}$ is an amenable URS.

Finally, $\mathcal{H}_{\mathcal{X}}=\{\delta_e\}$ if and only if $\mathbf{1}_H=\delta_e$ for all $H\in\mathcal{X}$, which happens if and only if $H=\{e\}$ for all $H\in\mathcal{X}$, i.e.\ $\mathcal{X}=\{\{e\}\}$.
\end{proof}
\begin{remark}
Lemma~\ref{lem:URS-is-URA} shows that the assignment $\mathcal{X}\mapsto\mathcal{H}_{\mathcal{X}}$ embeds $\mathrm{URS}(\Gamma)$ into the collection of uniformly recurrent states as a $\Gamma$-equivariant, order-preserving injection. In particular, every non-trivial amenable URS gives rise to a non-trivial amenable uniformly recurrent state.
\end{remark}
We now complete the proof of Theorem~\ref{thm:mainconfined} from the introduction.
\begin{proof}[Proof of Theorem~\ref{thm:mainconfined}]The forward direction (C$^*$-simplicity implies no amenable confined subalgebra) is the content of Theorem~\ref{thm:amenable-not-confined-direct}. For the converse, suppose $\Gamma$ is not $C^*$-simple. Using~\cite[Theorem~4.1]{Ken20}, $\Gamma$ admits a non-trivial amenable URS $\mathcal{X} \in \mathrm{URS}(\Gamma)$. Since $\mathcal{X}$ is non-trivial, $\{e\} \notin \mathcal{X}$, so for every $H \in \mathcal{X}$ the Chabauty-closure of $\{H^g : g \in \Gamma\}$ does not contain $\{e\}$. By the homeomorphism $H \mapsto \mathbf{1}_H$ established in the proof of Lemma~\ref{lem:URS-is-URA} below, this implies $\mathbb{C} \notin \overline{\{L(H)^g : g \in \Gamma\}}^{\mathrm{EM}}$, so $L(H)$ is a confined amenable subalgebra of $L(\Gamma)$. This completes the proof of Theorem~\ref{thm:mainconfined}. \end{proof}

\subsection{Exotic Uniform Recurrent States: Subalgebras from Automorphisms}
\label{subsec:exoticURAs}
In this subsection, we show that there exist \emph{exotic} uniformly recurrent states---ones that cannot be realized as $\mathcal{H}_{\mathcal{X}}$ for any Uniformly Recurrent Subgroup $\mathcal{X}$. We construct such examples by analyzing the fixed-point von Neumann subalgebras associated with finite-order automorphisms.

\begin{definition}[Subalgebra associated with an automorphism]
 Let $\varphi\in\text{Aut}(\Gamma)$ be a non-trivial automorphism.
Let $\mathcal{N}_{\varphi}$ be a collection of elements of $L(\Gamma)$ defined by
\begin{align*}\mathcal{N}_{\varphi}&=\left\{\sum_{g\in\Gamma}c_g\lambda(g): c_g=c_{\varphi(g)}\right\}\\&=\left\{x\in L(\Gamma):\tau_0\left(x\lambda(g)^*\right)=\tau_0\left(x\lambda(\varphi(g))^*\right)~\forall g\in\Gamma\right\}.\end{align*}   
\end{definition}

We now show that $\mathcal{N}_{\varphi}$ is a von Neumann subalgebra. We first make a simple observation about the order of such an automorphism. 

\begin{lemma}
\thlabel{orderoftheautomorphism}
Let $\varphi\in\text{Aut}(\Gamma)$ be a non-trivial automorphism and $\mathcal{N}_{\varphi}$ as above. Let $g\in\Gamma\setminus\{e\}$ be such that $g\in\text{Supp}(x)$ for some $x\in\mathcal{N}_{\varphi}$. Then, the orbit of $g$ under $\varphi$ is finite.
\begin{proof}
Let $g\in\Gamma\setminus\{e\}$ be such that $g\in\text{Supp}(x)$ for some $x\in\mathcal{N}_{\varphi}$. Then, $c_g=\tau_0(x\lambda(g)^*)\ne 0$. From our assumption, we see that $c_g=\tau_0(x\lambda(\varphi^n(g))^*)$ for every $n\in\mathbb{N}$. Now, 
\begin{align*}\|x\|_2^2&=\tau_0(x^*x)\\&\ge \tau_0\left(\left(\sum_{n=1}^kc_g\lambda(\varphi^n(g))\right)^*\left(\sum_{n=1}^kc_g\lambda(\varphi^n(g))\right)\right)\\&=\sum_{n=1}^k|c_g|^2\\&=k|c_g|^2\end{align*}
The claim follows since elements in $L(\Gamma)$ must have finite 2-norm (i.e., $\|x\|_2^2 < \infty$).
\end{proof}
\end{lemma}

It is a classical fact that one can extend $\varphi\in \text{Aut}(\Gamma)$ to get an automorphism on $L(\Gamma)$ and then define the fixed point von Neumann subalgebra. We make this precise. 

\begin{prop}
\thlabel{subalgebraassociated}
Let $\Gamma$ be a group with a nontrivial automorphism $\varphi$, and $\mathcal{N}_{\varphi}$ as above. Then, $\mathcal{N}_{\varphi}$ is a von Neumann algebra.
\begin{proof}
Fix any $\varphi\in \text{Aut}(\Gamma)$. It induces a unitary $U$ on $\ell^2(\Gamma)$ defined by $U\delta_g=\delta_{\varphi(g)}$ for all $g\in \Gamma$, where $\{\delta_g: g\in \Gamma\}$ denotes the standard orthonormal basis for $\ell^2(\Gamma)$. Then it is easy to check that for any $a=\sum_gc_g\lambda(g)\in \mathbb{C}\Gamma\subseteq \mathbb{B}(\ell^2(\Gamma))$, we have $UaU^*=\sum_gc_g\lambda(\varphi(g))\in\mathbb{C}\Gamma$. Hence we actually have $UL(\Gamma)U^*=L(\Gamma)$. We define the induced automorphism, still denoted by $\varphi\in \text{Aut}(L(\Gamma))$, by $\varphi=Ad(U)$. Then we can check that $\mathcal{N}_{\varphi}=L(\Gamma)^{\varphi}=\{a\in L(\Gamma): UaU^*=\varphi(a)=a\}$, which is clearly a von Neumann subalgebra. 
\end{proof}
\end{prop}

\begin{cor}
\thlabel{notoftheformofasubgroup}
Let $\varphi\in\text{Aut}(\Gamma)$ be a non-trivial automorphism of finite order and $\mathcal{N}_{\varphi}$ the associated von Neumann algebra. Then, $\mathcal{N}_{\varphi}$ is not of the form $L(H)$ for any subgroup $H\le \Gamma$.
\end{cor}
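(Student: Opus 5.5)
The plan is to argue by contradiction. Suppose $\mathcal{N}_{\varphi}=L(H)$ for some subgroup $H\le\Gamma$. The idea is to compare which group elements $\lambda(h)$ lie in each algebra. For any $h\in H$ we have $\lambda(h)\in L(H)=\mathcal{N}_{\varphi}$. By the description of $\mathcal{N}_{\varphi}$ as the fixed-point algebra $L(\Gamma)^{\varphi}$ (Proposition~\ref{subalgebraassociated}), this means
\[
\lambda(\varphi(h))=U\lambda(h)U^*=\lambda(h).
\]
Hence $\varphi(h)=h$. Equivalently, the Fourier coefficients of $\lambda(h)$ satisfy $c_h=1$ and $c_{\varphi(h)}=c_h$, which forces $\varphi(h)=h$. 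So $H\subseteq \mathrm{Fix}(\varphi)$.

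Next I would produce an element of $\mathcal{N}_{\varphi}$ that does not lie in $L(\mathrm{Fix}(\varphi))$. Since $\varphi$ is non-trivial, there is some $g\in\Gamma$ with $\varphi(g)\neq g$. Since $\varphi$ has finite order $n$, the orbit $\mathcal{O}=\{g,\varphi(g),\dots,\varphi^{n-1}(g)\}$ is finite and contains at least two elements. Set
\[
x=\sum_{k\in\mathcal{O}}\lambda(k)\in\mathbb{C}\Gamma.
\]
This $x$ is a bounded finite sum whose coefficients are constant on $\varphi$-orbits, so $x\in\mathcal{N}_{\varphi}$. Note that $\mathcal{N}_{\varphi}$ is defined by $c_{k}=c_{\varphi(k)}$ for all $k$. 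For $x$ the coefficients are $1$ on $\mathcal{O}$ and $0$ off it, and $\varphi$ maps both $\mathcal{O}$ and its complement to themselves, so the condition holds.

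Finally I would derive the contradiction. If $x\in L(H)$, then $E_{L(H)}(x)=x$. Since $E_{L(H)}$ acts on Fourier coefficients by restricting them to $H$, this forces $\operatorname{supp}(x)\subseteq H$. In particular $g\in H$, and therefore $\varphi(g)=g$ by the first step, contradicting the choice of $g$.

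The argument is routine. The only point needing care is the coefficient description of $\mathcal{N}_{\varphi}$ versus $L(\Gamma)^{\varphi}$. The finite-order hypothesis is used only to guarantee that $x$ is a finite sum; Lemma~\ref{orderoftheautomorphism} shows that finite orbits are in any case necessary for support points.
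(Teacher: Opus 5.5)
Your proof is correct, and it reaches the contradiction by a slightly different route than the paper. The paper identifies the trace-preserving conditional expectation onto $\mathcal{N}_{\varphi}=L(\Gamma)^{\varphi}$ with the average $\frac{1}{k}\sum_{i=0}^{k-1}\varphi^{i}$. Then $\mathbb{E}_{\mathcal{N}_{\varphi}}(\lambda_g)$ is the normalized sum over the orbit $\{\varphi^i(g)\}$, and this cannot lie in $\{\lambda_g,0\}$, which is where $\mathbb{E}_{L(H)}$ must send $\lambda_g$. You use the same orbit sum, but as an explicit element of $\mathcal{N}_{\varphi}$. You combine it with the observation that $\lambda_h\in\mathcal{N}_{\varphi}$ forces $\varphi(h)=h$, so that $H\subseteq\mathrm{Fix}(\varphi)$. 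The paper's version is a one-line computation once the averaging formula is available, and that formula is reused later to compute $\phi_{\mathcal{M}}$ in the exotic-state theorem. It does, however, require checking that the average really is the $\tau_0$-preserving conditional expectation. Your version needs only the fixed-point description of $\mathcal{N}_{\varphi}$ and the fact that elements of $L(H)$ have Fourier support in $H$. It also makes clear that the hypothesis can be weakened to ``$\varphi$ has a finite orbit with at least two elements''. That weakened hypothesis is in fact necessary. If every non-fixed element has an infinite orbit, the orbit-finiteness lemma for supports gives $\mathcal{N}_{\varphi}\subseteq L(\mathrm{Fix}(\varphi))$, and the reverse inclusion is immediate, so $\mathcal{N}_{\varphi}=L(\mathrm{Fix}(\varphi))$.
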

\begin{proof}
Suppose $\mathcal{N}_{\varphi} = L(H)$ for some subgroup $H$. Then the canonical conditional expectation satisfies $\mathbb{E}_{L(H)}(\lambda_g) \in \{\lambda_g, 0\}$ for all $g \in \Gamma$. However, since $\varphi$ has finite order $k > 1$, the conditional expectation onto $\mathcal{N}_\varphi = L(\Gamma)^\varphi$ is given by $\mathbb{E}_{\mathcal{N}_\varphi}(x) = \frac{1}{k}\sum_{i=0}^{k-1}\varphi^i(x)$.
Pick any $g \in \Gamma$ such that $\varphi(g) \neq g$. Then $\mathbb{E}_{\mathcal{N}_\varphi}(\lambda_g) = \frac{1}{k}\sum_{i=0}^{k-1}\lambda_{\varphi^i(g)}$, which is a non-trivial convex combination of orthogonal generators. Thus $\mathbb{E}_{\mathcal{N}_\varphi}(\lambda_g) \notin \{\lambda_g, 0\}$, contradicting $\mathcal{N}_{\varphi} = L(H)$.
\end{proof}
We are now ready to prove the existence of exotic Uniformly Recurrent States. By forcing the positive definite functions to have fractional values across an entire orbit, we guarantee that the resulting dynamical limits cannot possibly represent subgroups.
\begin{theorem}
\thlabel{thm:exotic-uras}
There exist groups $\Gamma$ and Uniformly Recurrent States $\mathcal{H} \in \mathrm{URA}(L(\Gamma))$ such that no element of $\mathcal{H}$ is of the form $\phi_{L(H)}$ for any subgroup $H \le \Gamma$. Consequently, the collection of uniformly recurrent states contains exotic elements that do not arise from Uniformly Recurrent Subgroups.
\end{theorem}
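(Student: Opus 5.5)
We will build an explicit example in which the orbit of $\phi_{\mathcal{N}_\varphi}$ is a single fixed point of $\mathcal{X}_\Gamma$. A fixed point is automatically a closed, minimal, $\Gamma$-invariant set, so it is a Uniformly Recurrent State.

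\emph{Choice of example.} Take $\Gamma$ with a finite-order automorphism $\varphi$ that commutes with all inner automorphisms. Concretely, let $\Gamma$ be abelian, for instance $\Gamma=\mathbb{Z}$ with $\varphi(n)=-n$ (order $k=2$), or $\Gamma=\mathbb{Z}^2$ with the coordinate swap. Set $\mathcal{M}=\mathcal{N}_\varphi=L(\Gamma)^\varphi$. By Proposition \ref{subalgebraassociated}, and by the averaging formula used in the proof of Corollary \ref{notoftheformofasubgroup}, the conditional expectation is
\[
\mathbb{E}_{\mathcal{M}}(\lambda_g)=\tfrac1k\sum_{i=0}^{k-1}\lambda_{\varphi^i(g)} .
\]

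\emph{Computing the positive definite function.} Let $O(g)$ denote the $\varphi$-orbit of $g$. The elements $\lambda_h$, $h\in O(g)$, are orthonormal in $L^2$, so
\[
\phi_{\mathcal{M}}(g)=\Big\|\tfrac1k\sum_{i}\lambda_{\varphi^i(g)}\Big\|_2^2=\tfrac{1}{|O(g)|}.
\]
For $\varphi(n)=-n$ on $\mathbb{Z}$ this gives $\phi_{\mathcal{M}}(0)=1$ and $\phi_{\mathcal{M}}(n)=\tfrac12$ for $n\neq 0$.

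\emph{The URS.} Since $\Gamma$ is abelian, the conjugation action on $\mathrm{PD}_1(\Gamma)$ is trivial. Hence $\mathcal{H}:=\{\phi_{\mathcal{M}}\}$ is a closed, $\Gamma$-invariant, minimal subset of $\mathcal{X}_\Gamma$, i.e.\ a Uniformly Recurrent State in the sense of Definition \ref{def:URA}.

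\emph{Exoticity.} Any $\phi_{L(H)}=\mathbf 1_H$ takes only the values $0$ and $1$. The function $\phi_{\mathcal{M}}$ takes the value $\tfrac12$, so it is not of the form $\phi_{L(H)}$ for any subgroup $H\le\Gamma$. Consequently $\mathcal{H}\neq\mathcal{H}_{\mathcal{X}}$ for any URS $\mathcal{X}$, by Lemma \ref{lem:URS-is-URA}.

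\emph{Main obstacle.} The abelian example may be judged too degenerate. If one wants an i.c.c.\ group, take $\Gamma=\mathbb{F}_n$ with the cyclic permutation $\sigma$ of the generators from the earlier example. Then $\phi(g)=1/|O(g)|$, where every nontrivial orbit has size dividing $n$, so $\phi$ is $\{1/d : d\mid n\}$-valued. The orbit-closure of $\phi$ under conjugation remains inside the closed set of functions taking values in $\{0\}\cup\{1/d : d\mid n\}$. This still does not exclude limits in $\{0,1\}$, which is the delicate point. To control this, I would first try to show that the orbit closure avoids all $\{0,1\}$-valued functions. The plan is to use the confinement of $L(\Gamma)^\sigma$, which holds by Lemma \ref{lem: confiness passes to finite index subalgebras}, together with the identity $\phi(s^{-1}gs)=1/|O(s^{-1}gs)|$. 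The aim is to show that for every conjugate there are elements of orbit size $n$ (e.g.\ $s^{-1}a_1s$ has orbit size $n$ whenever $s\in\mathrm{Fix}(\sigma)$, so $\phi(s^{-1}a_1s)=1/n$), and then pass to a minimal subset of the orbit closure. This is where I expect the real work to lie; the abelian construction above already suffices for the theorem as stated.
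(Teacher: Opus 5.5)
Your abelian construction is correct and proves the theorem as stated. For $\Gamma=\mathbb{Z}$ with $\varphi(n)=-n$, the averaging formula gives $\phi_{\mathcal{M}}(g)=1/|O(g)|$, which equals $\tfrac12$ off the identity. Since conjugation acts trivially on $\mathrm{PD}_1(\Gamma)$, the singleton $\{\phi_{\mathcal{M}}\}$ is a closed, minimal, invariant subset of $\mathcal{X}_\Gamma$ that contains no indicator function.

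The paper uses the same ingredient, the fixed-point algebra of a finite-order automorphism, on $\Gamma=\Lambda\times\Lambda$ with the coordinate flip, for an arbitrary nontrivial $\Lambda$. With $\Lambda=\mathbb{Z}$ this is exactly your $\mathbb{Z}^2$ example. The two routes differ in how they control the orbit closure. For, say, $\Lambda=\mathbb{F}_2$ the conjugation dynamics are nontrivial, so the paper chooses $g_0=(a,e)$. Every conjugate $s^{-1}g_0s=(u^{-1}au,e)$ is moved by the flip, so $(s\cdot\phi_{\mathcal{M}})(g_0)=\tfrac12$ for all $s$. This constant passes to pointwise limits, hence holds for every $\psi$ in any minimal subset of the orbit closure (such a subset exists by compactness and Zorn). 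Your version needs neither Zorn nor any orbit-closure analysis. However, it only produces fixed-point URSs of abelian groups, where the notion is degenerate; the paper's argument gives exotic examples in i.c.c.\ groups with genuinely nontrivial dynamics.

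The same device removes the obstacle you flag for $\mathbb{F}_n$. Take $g_0=a_1$. For every $s\in\mathbb{F}_n$, not only $s\in\mathrm{Fix}(\sigma)$, the element $\sigma^j(s^{-1}a_1s)$ maps to $e_{1+j}$ in the abelianization $\mathbb{Z}^n$. So every conjugate of $a_1$ has $\sigma$-orbit of size exactly $n$, and $(s\cdot\phi)(a_1)=1/n$ for all $s$.

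Confinement of $L(\mathbb{F}_n)^\sigma$ is not the right tool here: it only keeps $\delta_e$ out of the orbit closure, not other indicators $\mathbf{1}_H$. Note also that $\phi$ takes values in the finite set $\{1/d: d\mid n\}$, so every pointwise limit does too; the $\{0\}$ in your closed set is unnecessary. Thus $\mathbf{1}_\Gamma$ is the only indicator that could ever occur, and the value at $a_1$ rules it out.
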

\begin{proof}
Let $\Lambda$ be any non-trivial group (e.g., $\Lambda = \mathbb{F}_2$). Define $\Gamma = \Lambda \times \Lambda$, and let $\varphi \in \text{Aut}(\Gamma)$ be the coordinate-flip automorphism $\varphi(x, y) = (y, x)$. Clearly, $\varphi$ has order $k=2$. 

Let $\mathcal{M} = \mathcal{N}_\varphi = L(\Gamma)^\varphi$. By Proposition~\ref{prop:continuity-equivariance-of-phi}, the associated positive definite function $\phi_{\mathcal{M}} \in \mathrm{PD}_1(\Gamma)$ is given by $\phi_{\mathcal{M}}(g) = \|\mathbb{E}_{\mathcal{M}}(\lambda_g)\|_2^2$. Since $\mathbb{E}_{\mathcal{M}}(\lambda_g) = \frac{1}{2}(\lambda_g + \lambda_{\varphi(g)})$, we compute:
\[
\phi_{\mathcal{M}}(g) = 
\begin{cases} 
1 & \text{if } \varphi(g) = g \\ 
\frac{1}{4}\|\lambda_g\|_2^2 + \frac{1}{4}\|\lambda_{\varphi(g)}\|_2^2 = \frac{1}{2} & \text{if } \varphi(g) \neq g 
\end{cases}
\]
Thus, $\phi_{\mathcal{M}}(g) \in \{1/2, 1\}$ for all $g \in \Gamma$.

Choose an element $g_0 = (a, e) \in \Gamma$ where $a \in \Lambda \setminus \{e\}$. For any arbitrary conjugating element $s = (u, v) \in \Gamma$, we have:
\[
s^{-1} g_0 s = (u^{-1} a u, e).
\]
Applying the automorphism $\varphi$ yields:
\[
\varphi(s^{-1} g_0 s) = (e, u^{-1} a u).
\]
Because $a \neq e$, the conjugate $u^{-1} a u$ is also non-trivial. Therefore, $(u^{-1} a u, e) \neq (e, u^{-1} a u)$, meaning $\varphi(s^{-1} g_0 s) \neq s^{-1} g_0 s$ for \emph{all} $s \in \Gamma$.

Consequently, for the specific element $g_0 = (a,e)$, we have:
\[
(s \cdot \phi_{\mathcal{M}})(g_0) = \phi_{\mathcal{M}}(s^{-1} g_0 s) = \frac{1}{2} \quad \text{for all } s \in \Gamma.
\]

Now, let $\mathcal{H}$ be any Uniformly Recurrent Algebra contained in the pointwise orbit closure $\overline{\{s \cdot \phi_{\mathcal{M}} : s \in \Gamma\}}^{\,\text{ptwise}}$ (such an $\mathcal{H}$ exists by compactness and Zorn's Lemma). By definition, any $\psi \in \mathcal{H}$ is a pointwise limit of functions of the form $s_n \cdot \phi_{\mathcal{M}}$. Because $(s \cdot \phi_{\mathcal{M}})(g_0) = 1/2$ universally, it must be that:
\[
\psi(g_0) = \frac{1}{2} \quad \text{for all } \psi \in \mathcal{H}.
\]

However, if an element $\psi \in \mathcal{H}$ were to come from a subgroup $H \le \Gamma$, it would have the form $\psi = \phi_{L(H)} = \mathbf{1}_H$, which implies $\psi(g) \in \{0, 1\}$ for all $g \in \Gamma$. Since $1/2 \notin \{0, 1\}$, no such $\psi$ can correspond to a subgroup algebra. Thus, $\mathcal{H}$ consists entirely of exotic operator-algebraic states.
\end{proof}
\begin{example}[Exotic uniformly recurrent states in Free Groups]
Let $\Gamma = \mathbb{F}_2 = \langle a, b \rangle$ be the free group on two generators. Let $\varphi \in \mathrm{Aut}(\mathbb{F}_2)$ be the involution that swaps the generators: $\varphi(a) = b$ and $\varphi(b) = a$. Let $\mathcal{M} = L(\mathbb{F}_2)^\varphi$ be the corresponding fixed-point von Neumann subalgebra.

As shown previously, the associated positive definite function $\phi_{\mathcal{M}}$ evaluates to $1$ on fixed points and $1/2$ on elements moved by the automorphism. The conjugacy classes of $a$ and $b$ are strictly disjoint.

For any arbitrary conjugating element $s \in \mathbb{F}_2$, the element $s^{-1} a s$ is conjugate to $a$, while $\varphi(s^{-1} a s) = \varphi(s)^{-1} b \varphi(s)$ is conjugate to $b$. Therefore, $\varphi(s^{-1} a s) \neq s^{-1} a s$ for all $s \in \mathbb{F}_2$. This implies that 
\[
(s \cdot \phi_{\mathcal{M}})(a) = \phi_{\mathcal{M}}(s^{-1}as) = \frac{1}{2} \quad \text{for all } s \in \mathbb{F}_2.
\]

Consequently, any Uniformly Recurrent State $\mathcal{H}$ contained in the pointwise orbit closure of $\phi_{\mathcal{M}}$ will satisfy $\psi(a) = 1/2$ for all $\psi \in \mathcal{H}$. Since subgroup indicator functions only take values in $\{0, 1\}$, $\mathcal{H}$ is an exotic uniformly recurrent state in $L(\mathbb{F}_2)$ that does not arise from any Uniformly Recurrent Subgroup.
\end{example}

\begin{example}[Exotic Uniformly Recurrent States in Higher-Rank Lattices]
Let $\Gamma = SL_3(\mathbb{Z})$. Consider the inverse-transpose automorphism $\varphi \in \mathrm{Aut}(\Gamma)$ defined by $\varphi(A) = (A^T)^{-1}$. Since $\varphi^2 = \mathrm{id}$, $\varphi$ is an involution. Let $\mathcal{M} = L(SL_3(\mathbb{Z}))^\varphi$ be the corresponding fixed-point subalgebra.

We seek an element $g_0 \in SL_3(\mathbb{Z})$ such that $g_0$ and $\varphi(g_0)$ are not conjugate in $\Gamma$. Note that $\varphi(g_0) = (g_0^T)^{-1}$ is conjugate in $GL_3(\mathbb{R})$ to $g_0^{-1}$. Thus, to ensure they reside in disjoint conjugacy classes, it suffices to find a matrix $g_0 \in SL_3(\mathbb{Z})$ whose eigenvalues are not invariant under inversion.

Let us define the matrix:
\[
g_0 = \begin{pmatrix} 0 & 0 & 1 \\ 1 & 0 & 1 \\ 0 & 1 & 0 \end{pmatrix}.
\]
We compute $\det(g_0) = 1$, confirming $g_0 \in SL_3(\mathbb{Z})$. The characteristic polynomial of $g_0$ is:
\[
\det(xI - g_0) = \det \begin{pmatrix} x & 0 & -1 \\ -1 & x & -1 \\ 0 & -1 & x \end{pmatrix} = x(x^2 - 1) - 1(1) = x^3 - x - 1.
\]
The eigenvalues of $g_0^{-1}$ are the roots of the reversed polynomial, $x^3 + x^2 - 1$. Since $x^3 - x - 1 \neq x^3 + x^2 - 1$, the matrices $g_0$ and $g_0^{-1}$ (and thus $\varphi(g_0)$) possess distinct sets of eigenvalues. Consequently, $g_0$ and $\varphi(g_0)$ lie in strictly disjoint conjugacy classes.

For any $s \in SL_3(\mathbb{Z})$, the element $s^{-1} g_0 s$ is conjugate to $g_0$, while $$\varphi(s^{-1} g_0 s) = \varphi(s)^{-1} \varphi(g_0) \varphi(s)$$ is conjugate to $\varphi(g_0)$. Because their conjugacy classes are disjoint, $\varphi(s^{-1} g_0 s) \neq s^{-1} g_0 s$ for all $s \in SL_3(\mathbb{Z})$.

Thus, $(s \cdot \phi_{\mathcal{M}})(g_0) = 1/2$ universally for all $s \in SL_3(\mathbb{Z})$. Any Uniformly Recurrent State $\mathcal{H}$ in the orbit closure of $\phi_{\mathcal{M}}$ must satisfy $\psi(g_0) = 1/2$ for all $\psi \in \mathcal{H}$. Therefore, $L(SL_3(\mathbb{Z}))$ admits an exotic Uniformly Recurrent State that does not arise from any Uniformly Recurrent Subgroup.
\end{example}
We now exhibit three additional infinite families of exotic Uniformly Recurrent States. In each case, the proof follows exactly the pattern of Theorem~\ref{thm:exotic-uras} and the subsequent examples. The associated positive-definite function \(\phi_{\mathcal{M}}\) is constant on a suitable orbit at the value \(1/k\) (\(k=\operatorname{ord}(\varphi)\geq 2\)), which forces every point in any minimal invariant subset of the orbit closure to lie outside the image of \(\mathrm{URS}(\Gamma)\).

\begin{example}[Cyclic permutation of generators in free groups \(F_n\) (\(n\geq 3\))]
Let \(\Gamma = F_n = \langle a_1,\dots,a_n\rangle\) (\(n\geq 3\)) and let \(\varphi\in\mathrm{Aut}(\Gamma)\) be the order-\(n\) automorphism
\[
\varphi(a_i) = a_{i+1}\quad (i=1,\dots,n-1),\qquad \varphi(a_n)=a_1.
\]
Let \(\mathcal{M} = L(\Gamma)^\varphi\) be the fixed-point subalgebra and \(\phi_{\mathcal{M}}\) its positive-definite function. Then
\[
\mathbb{E}_{\mathcal{M}}(\lambda_g) = \frac{1}{n}\sum_{k=0}^{n-1}\lambda_{\varphi^k(g)},
\]
so
\[
\phi_{\mathcal{M}}(g) = 
\begin{cases}
1 & \text{if }\varphi^k(g)=g\text{ for all }k,\\
\frac{1}{n} & \text{if the orbit }\{\varphi^k(g)\}_{k=0}^{n-1}\text{ has size exactly }n.
\end{cases}
\]
Choose \(g_0 = a_1\). The abelianization map \(\Gamma\to\mathbb{Z}^n\) sends \(a_1\) to the first standard basis vector \(e_1\). The images of \(\varphi^k(g_0)\) are exactly the distinct basis vectors \(e_{k+1}\). Hence, the conjugacy classes \([a_1]\), \([\varphi(a_1)]= [a_2]\), \ldots, \([a_n]\) are pairwise disjoint (they lie in different cosets of the commutator subgroup).  

For any \(s\in\Gamma\) the conjugate \(s^{-1}g_0 s\) is still in the class \([a_1]\), while \(\varphi(s^{-1}g_0 s)\) is in \([a_2]\). Thus \(\varphi(s^{-1}g_0 s)\neq s^{-1}g_0 s\) for all \(s\), and therefore
\[
(s\cdot\phi_{\mathcal{M}})(g_0) = \phi_{\mathcal{M}}(s^{-1}g_0 s) = \frac{1}{n}\qquad\forall s\in\Gamma.
\]
Any Uniformly Recurrent State \(\mathcal{H}\) contained in the (compact) pointwise orbit closure of \(\phi_{\mathcal{M}}\) satisfies \(\psi(g_0)=\frac{1}{n}\) for every \(\psi\in\mathcal{H}\). Since \(\frac{1}{n}\notin\{0,1\}\), no \(\psi\) can be of the form \(\mathbf{1}_H\), so \(\mathcal{H}\) is exotic.
\end{example}

\begin{example}[Permutation automorphisms on direct products of i.c.c.\ groups]
Let \(\Lambda\) be any non-trivial i.c.c.\ group and \(k\geq 2\). Set \(\Gamma = \Lambda^k\) (direct product) and let \(\varphi\) be the cyclic permutation
\[
\varphi(x_1,\dots,x_k) = (x_k,x_1,\dots,x_{k-1}).
\]
Again \(\varphi\) has order \(k\). Pick \(g_0 = (a,e,\dots,e)\) with \(a\in\Lambda\setminus\{e\}\). The abelianization argument (or the support of the word-length function) shows that the conjugacy classes of \(g_0\) and \(\varphi(g_0)\) are disjoint. The same constant-value argument yields \(\phi_{\mathcal{M}}(g_0) \equiv \frac{1}{k}\) on the entire \(\Gamma\)-orbit, hence exotic Uniformly Recurrent States.

This construction works even when \(\Lambda\) itself has a trivial outer automorphism group (e.g., certain hyperbolic groups or higher-rank lattices).
\end{example}

\begin{remark}
The same argument works whenever \(\Gamma\) admits a finite-order automorphism \(\varphi\) such that there exists \(g\in\Gamma\) whose conjugacy class is disjoint from the class of \(\varphi(g)\).
\end{remark}
\bibliographystyle{amsalpha}
\bibliography{confined}
\end{document}